\DeclareFontFamily{U}{mathx}{\hyphenchar\font45}
\DeclareFontShape{U}{mathx}{m}{n}{
      <5> <6> <7> <8> <9> <10>
      <10.95> <12> <14.4> <17.28> <20.74> <24.88>
      mathx10
      }{}
\DeclareSymbolFont{mathx}{U}{mathx}{m}{n}
\DeclareMathAccent{\widecheck}{0}{mathx}{"71}
\DeclareMathAccent{\wideparen}{0}{mathx}{"75}
\newcommand{\wh}{\widehat}                              	 
\newcommand{\MC}[1]{\mathcal{#1}}                          
\newcommand{\MB}{\mathbb}                            		 
\newcommand{\p}{\varphi}                                 		 
\DeclareMathOperator{\Hom}{Hom}                    	 
\DeclareMathOperator{\ind}{Ind}                          	 
\DeclareMathOperator{\res}{Res}                        	 
\DeclareMathOperator{\End}{End}                      	 
\DeclareMathOperator{\UnitModule}{\mathbbm{1}}         
\newcommand{\partitions}{\MC{P}}                               
\newcommand{\partitionsn}[1]{\MC{P}_{#1}}                 
\newcommand{\ydcell}{\;\framebox(7,7){}\;}	           
\newcommand{\Sy}[1]{\ensuremath{S_{#1}}}                                                      
\newcommand{\charrep}[1]{\chi^{#1}}                                                                  
\newcommand{\norcharrep}[1]{\widetilde{\chi}^{#1}}                                           
\newcommand{\primaryiso}{\p}									        
\newcommand{\loworderterms}{\text{l.o.t.}}						        
\newcommand{\Hcenter}{\End_{\MC{H}'}(\UnitModule)}  			                   
\newcommand{\partsofpartition}[2]{m_{#1}(#2)} 						  
\newcommand{\content}[1]{\text{cont}(#1)}						           
\newcommand{\JM}[1]{J_{#1}}										     
\newcommand{\pr}[1]{\mathrm{pr}_{#1}}								             
\newcommand{\conj}{\mathrm{Conj}}						     			     
\newcommand{\conjOdd}{\mathrm{Conj}_{odd}}						      
\newcommand{\ds}{\displaystyle}									  
\newcommand{\Heis}{\mathcal{H}_{tw}}								  
\newcommand{\Tr}{\operatorname{Tr}}							           
\newcommand{\TrH}{\Tr(\Heis)}										   
\newcommand{\Endid}{\text{End}_{\Heis}(\UnitModule)}							   
\newcommand{\Ser}{\mathbb{S}_n}								            
\newcommand{\Clif}{\mathcal{C\ell}_n}								    
\newcommand\numberthis{\addtocounter{equation}{1}\tag{\theequation}}              
\newcommand{\TrHEv}{\TrH_{\overline{0}}}							    
\newcommand{\Ind}{\operatorname{Ind}}								    
\newcommand{\regrep}{\tau}								                     
\newcommand{\Sern}[1]{\MB{S}_{#1}}								     
\newcommand{\supersym}{\Gamma}							                      
\newcommand{\oddpartitions}[1][]{\mathcal{OP}_{#1}}				             
\newcommand{\strictpartitions}[1][]{\mathcal{SP}_{#1}}					     
\newcommand{\stanshiftedstrict}[1]{g_{#1}}							      
\newcommand{\Wminus}{W^{-}}									     
\newcommand{\Winfty}{W_{1+ \infty}}								     
\newcommand{\length}[1]{\ell(#1)}           								     
\newcommand{\Schurgraph}{\mathbb{G}}								     
\newcommand{\pathsfromzero}[1]{h(#1)}	                                                                 
\newcommand{\shifted}[1]{S(#1)}									      
\newcommand{\lengthparity}{\delta}								              
\newcommand{\downtransition}[2]{p^{\downarrow}(#1,#2)}			              
\newcommand{\uptransition}[2]{p^{\uparrow}(#1,#2)}			              
\newcommand{\edgemulti}[2]{\kappa(#1,#2)}							      
\newcommand{\Plancherel}[1]{Pl_{#1}}								      
\newcommand{\SimpleSer}[1]{L^{#1}}								      
\newcommand{\shiftedpowersum}{\mathfrak{p}}
\newcommand{\KerovCoorUp}[1]{X(#1)}							               
\newcommand{\KerovCoorDown}[1]{Y(#1)}							      
\newcommand{\KerovCoorUpNoZero}[1]{X'(#1)}						      
\newcommand{\JMeig}[1]{s(#1)}									      
\newcommand{\upmoment}[1]{\mathbf{g}^{\uparrow}_{#1}}					      
\newcommand{\downmoment}[1]{\mathbf{g}^{\downarrow}_{#1}}				      
\newcommand{\typeBWeyl}[1]{B_{#1}}								       
\newcommand{\hyperoctahedral}[1]{\widehat{B}_{#1}}					      
\newcommand{\cliffordgroup}[1]{\Pi_{#1}}							              
\newcommand{\evencenter}[1]{Z(\Sern{#1})_{\overline{0}}}						              
\newcommand{\distinguishedperm}[2]{\sigma_{#1;#2}}                                                  
\newcommand{\octoclasssum}[2]{\wh{A}_{#1;#2}}						       
\newcommand{\Sergeevclasssum}[2]{A_{#1;#2}}						        
\newcommand{\centralidem}[1]{e_{#1}}                                                                       
\newcommand{\symtoHom}[1]{\MC{T}_{#1}}						                
\newcommand{\FockSpaceCat}[1][]{\mathfrak{S}_{#1}}					       
\newcommand{\FockSpaceFunctor}[1]{F^{\Heis}_{#1}}					        
\newcommand{\hyperSergeevProj}[1]{\pi_{#1}}           					        
\newcommand{\LcosHyper}[2]{\wh{\MC{LC}}^{#1}_{#2}}						
\newcommand{\LcosSer}[2]{\MC{LC}^{#1}_{#2}}							
\newcommand{\HyperChar}[1]{\wh{\chi}^{#1}}								
\newcommand{\SerChar}[1]{\chi^{#1}}								         
\newcommand{\SimpleHyper}[1]{\wh{L}^{\lambda}}					                  
\newcommand{\normSerChar}[1]{\widetilde{\chi}^{#1}}					          
\newcommand{\primaryisom}{\p}										  
\newcommand{\funonYD}{\text{Fun}(\strictpartitions,\MB{C})}				          
\newcommand{\supersymhom}[1]{F^{\supersym}_{#1}}					         
\newcommand{\refequal}[1]{\xy {\ar@{=}^{#1}
(-1,0)*{};(1,0)*{}};
\endxy}
\newcommand{\arrowlines}{%
\begin{tikzpicture}
  \draw[thick] (0,0) -- (-.08,-.08);
  \draw[thick] (0,0) -- (.08,-.08);
\end{tikzpicture}%
}
\newcommand{\redarrowlines}{%
	\begin{tikzpicture}
	\draw[thick,red] (0,0) -- (-.08,-.08);
	\draw[thick,red] (0,0) -- (.08,-.08);
	\end{tikzpicture}%
}
\newcommand{\smallydcell}{
\begin{tikzpicture}
    \draw (0,0) rectangle (.2,.2);
\end{tikzpicture}
}
\newcommand{\trup}{
\begin{tikzpicture}[baseline=(current bounding box).center]
\draw[thick] (2,2) circle (.5cm);
\node at (1.5,2) {\arrowlines};
\node at (2,2) {$*$};
\end{tikzpicture}}
\newcommand{\trdown}{
	\begin{tikzpicture}[baseline=(current bounding box).center]
	\draw[thick] (2,2) circle (.5cm);
	\node[rotate = 180] at (1.5,2) {\arrowlines};
	\node at (2,2) {$*$};
	\end{tikzpicture}}
\newcommand{\ckpicture}{
\begin{tikzpicture}
\draw[thick] (2,2) circle (.5cm);
\node at (1.5,2) {\arrowlines};
\draw[fill=black] (1.62,2.33) circle (.08cm);
\node at (1.3,2.7) {$2k$};
\end{tikzpicture}}
\newcommand{\cktildepicture}{
\begin{tikzpicture}
\draw[thick] (2,2) circle (.5cm);
\node[rotate = 180] at (1.5,2) {\arrowlines};
\draw[fill=black] (1.62,2.33) circle (.08cm);
\node at (1.3,2.7) {$2k$};
\end{tikzpicture}}
\newcommand{\op}{\omega_{+}}
\newcommand{\omitt}[1]{}
\newtheorem{theorem}{Theorem}[section]
\newtheorem{lemma}[theorem]{Lemma}
\newtheorem{proposition}[theorem]{Proposition}
\newtheorem{corollary}[theorem]{Corollary}
\newtheorem{remark}[theorem]{Remark}
\newenvironment{definition}[1][Definition]{\begin{trivlist}
\item[\hskip \labelsep {\bfseries #1}]}{\end{trivlist}}
\newenvironment{example}[1][Example]{\begin{trivlist}
\item[\hskip \labelsep {\bfseries #1}]}{\end{trivlist}}
\title[The center of the twisted Heisenberg category]{The center of the twisted Heisenberg category, factorial Schur $Q$-functions, and transition functions on the Schur graph}
\author{Henry Kvinge, Can Ozan O{\u g}uz, and Michael Reeks}
\date{\today}
\begin{document}

\maketitle

\begin{abstract}
We establish an isomorphism between the center of the twisted Heisenberg category and the subalgebra $\Gamma$ of the symmetric functions generated by odd power sums. We give a graphical description of the factorial Schur $Q$-functions and inhomogeneous power sums as closed diagrams in the twisted Heisenberg category, and show that the bubble generators of the center correspond to two sets of generators of $\Gamma$ which encode data related to up/down transition functions on the Schur graph. Finally, we describe an action of the trace of the twisted Heisenberg category, the $W$-algebra $W^-\subset W_{1+\infty}$, on $\Gamma$.
\end{abstract}

\setcounter{tocdepth}{1}
\tableofcontents

\section{Introduction}

In~\cite{Kho14}, Khovanov describes a linear monoidal category $\mathcal{H}$ which conjecturally categorifies the Heisenberg algebra. The morphisms of $\mathcal{H}$ are governed by a graphical calculus of planar diagrams. This category has connections to many interesting areas of representation theory and combinatorics. The trace of $\mathcal{H}$, which can be defined diagrammatically as the algebra of diagrams on the annulus, was shown in~\cite{CLLS15} to be isomorphic to the $W$-algebra $W_{1+\infty}$ at level one. 
 The center of $\mathcal{H}$, which is the algebra $\End_{\mathcal{H}}(\UnitModule)$ of endomorphisms of the monoidal identity, was shown in~\cite{KLM16} to be isomorphic to the algebra of shifted symmetric functions $\Lambda^*$ of Okounkov and Olshanski~\cite{OO97}. 

A twisted version of Khovanov's Heisenberg category was introduced by Cautis and Sussan in ~\cite{CS15}. The twisted Heisenberg category $\Heis$ is a $\mathbb{C}$-linear additive monoidal category, with an additional $\mathbb{Z}/2\mathbb{Z}$-grading. It conjecturally categorifies the twisted Heisenberg algebra. The center of $\Heis$, $\Endid$, was studied in~\cite{OR17} where it was shown that as a commutative $\MB{C}$-algebra, 
\begin{equation*}
\Endid \cong \MB{C}[d_0,d_2,d_4, \dots ] \cong \MB{C}[\bar{d}_2,\bar{d}_{4},\bar{d}_{6}, \dots],
\end{equation*}
where $d_{2k}$ and $\bar{d}_{2k}$ are certain clockwise and counterclockwise bubble generators respectively. While symmetric groups play a central role for $\mathcal{H}$ in~\cite{Kho14}, finite Sergeev superalgebras $\{\Sern{n}\}_{n \geq 0}$ (also known as finite Hecke--Clifford algebras of type $A$) play the central role for $\Heis$. In particular, Cautis and Sussan construct a family of functors $\{\FockSpaceFunctor{n}\}_{n \geq 0}$ from $\Heis$ to bimodule categories of Sergeev algebras in order to categorify the Fock space representation of the twisted Heisenberg algebra. When restricted to $\Endid$, each functor gives a surjective algebra homomorphism $\FockSpaceFunctor{n}: \Endid \twoheadrightarrow \evencenter{n}$ where $\evencenter{n}$ is the even center of $\Sern{n}$. 

In this paper, we study the combinatorial and representation theoretic properties of $\Endid$. Our main result, Theorem~\ref{thm-main}, establishes an isomorphism 
$ \label{eqn-intro-main-iso} 
\primaryiso: \Endid \overset{\sim}{\longrightarrow} \supersym
$, 
where $\supersym$ is a subalgebra of the algebra of symmetric functions $\supersym = \MB{C}[p_1,p_3,p_5, \dots]$ ($\supersym$ is sometimes known as the algebra of supersymmetric~\cite{Iv01} or doubly symmetric~\cite{Pet09} functions). The construction of $\primaryiso$ relies on the fact that there are embeddings of both $\Endid$ and $\supersym$ into the algebra of functions on strict partitions, $\funonYD$. In our proof of Theorem~\ref{thm-main} we identify the images of certain algebraically independent generators of these algebras in $\funonYD$:  the closures of $n$-cycles from $\Endid$ and inhomogeneous analogues of odd power sums $\shiftedpowersum_{n}$ in $\supersym$. The latter were first investigated by Ivanov in his study of the asymptotic behavior of characters of projective representations of symmetric groups~\cite{Iv01}. We go on to identify the closure of idempotents of $\Sern{n}$ with scalar multiples of Ivanov's factorial Schur $Q$-functions. Intriguingly, the coefficients that appear on the image of idempotent closures when written in terms of factorial Schur $Q$-functions count the number of paths between specific vertices in the graph of all strict partitions (also known as the Schur graph). A similar phenomenon was observed in \cite{KLM16}. A dictionary between $\supersym$ and $\Endid$ is found in Table \ref{dictionary-table}.

In parallel to the surjective homomorphisms $\{\FockSpaceFunctor{n}\}_{n \geq 0}$ from $\Endid$ to $\{\evencenter{n}\}_{n \geq 0}$, for all $n \geq 0$ one can also construct surjective homomorphisms $\supersymhom{n}: \supersym \twoheadrightarrow \evencenter{n}$~\cite{Iv01}. Our isomorphism $\primaryiso$ is canonical in the sense that it intertwines the pair $\FockSpaceFunctor{n}$ and $\supersymhom{n}$ for each $n \geq 0$.
\begin{center}
\begin{tikzpicture}

\node at (-3,0) {$\Endid$};

\node at (3,0) {$\supersym$};

\node at (0,-2) {$\evencenter{n}$};

\draw[<->] (-1.9,0) --(2.5,0);
\draw[->>] (-2.1,-.3) -- (-.5,-1.7);
\draw[->>] (2.5,-.3) -- (.5,-1.7);

\node at (0,.3) {$\primaryiso$};
\node at (-1.8,-1.2) {$\FockSpaceFunctor{n}$};
\node at (1.8,-1.2) {$\supersymhom{n}$};

\end{tikzpicture}
\end{center}

One interesting feature of the center of the non-twisted Heisenberg category $\mathcal{H}$ is that, as shifted symmetric functions, the curl generators are best understood in terms of moments of Kerov's transition and co-transition measures on Young diagrams; fundamental tools used to answer probabilistic questions related to the asymptotic representation theory of symmetric groups~\cite{Ker93}. In this paper we show that this connection to asymptotic representation theory extends to the twisted Heisenberg category. Specifically, we identify the clockwise bubble generators $\{d_{2k}\}_{k \geq 0}$ and counterclockwise bubble generators $\{\bar{d}_{2k}\}_{k \geq 1}$ with two sets of algebraically independent generators for $\supersym$ discovered by Petrov~\cite{Pet09}, $\{\downmoment{k}\}_{k \geq 0}$ and $\{\upmoment{k}\}_{k \geq 0}$ respectively. The functions $\{\downmoment{k}\}_{k \geq 0}$ (respectively $\{\upmoment{k}\}_{k \geq 0}$) encode down (resp. up) Markov transition kernels on the Schur graph. In particular, the difference between up and down transition functions manifests itself graphically in $\Endid$ as a difference in orientation of diagrams. This seems to be yet another indication of the ``planar nature'' of structures arising from noncommutative probability theory.

There is a natural action of the trace of a category on its center, which can be diagrammatically defined as gluing annular diagrams (elements of the trace) around planar ones (elements of the center). In the case of Khovanov's Heisenberg category, the results of \cite{CLLS15} and \cite{KLM16} give rise to an action of $\Winfty$ on the algebra of shifted symmetric functions. This representation of $\Winfty$ was described in terms of symmetric group representation theory by Lascoux and Thibon in \cite{LT01}. The trace of $\Heis$ was shown in \cite{OR17} to be isomorphic to a classical-type subalgebra of $\Winfty$ discovered by Kac, Wang, and Yan called $\Wminus$ \cite{KWY98}. This result along with Theorem \ref{thm-main} gives a representation of $\Wminus$ on $\supersym$. In this paper we describe this representation which is a twisted version of the representation described in \cite{LT01}. 

The paper is structured as follows. In Section \ref{section-schur-sergeev} we describe necessary background material on Schur's graph and the representation theory of Sergeev algebras. In Section \ref{sect-gamma} we describe the subalgebra $\supersym$ of the symmetric functions and several of its bases. In Sections \ref{sect-twisted-Heisenberg} we recall the definition of the twisted Heisenberg category $\mathcal{H}_{tw}$ and review the functors $\{\FockSpaceFunctor{n}\}_{n \geq 0}$. In Section \ref{Section-main} we first establish the isomorphism between $\Endid$ and $\supersym$, and then describe the $W$-algebra $\Wminus$ and its induced action on $\supersym$.

\textbf{Acknowledgements:} The authors would like to thank Weiqiang Wang and Nick Ercolani for many helpful discussions. The second author was partially supported by grants DMS-1664240 and DMS-1255334.

\section{The Schur graph and Sergeev algebras} \label{section-schur-sergeev}
  
\subsection{Transition functions on the Schur graph}\label{section-shifted-YD}
Let $\partitionsn{n}$ be the set of all partitions of $n$ and set
\begin{equation*}
\partitionsn{} := \bigcup_{n \geq 0} \partitionsn{n}.
\end{equation*}
We freely identify a partition $\rho$ with its corresponding Young diagram. If $\rho \in \partitionsn{n}$ then we write $|\rho| = n$. If $\rho = (\rho_1,\rho_2,\dots,\rho_r)$ and $\eta = (\eta_1, \eta_2, \dots, \eta_t) \in \partitions$ then we write $\eta \subset \rho$ when $\eta_i \leq \rho_i$ for all $i \geq 1$. We denote the number of parts (or length) of a partition $\rho$ by $\length{\rho}$. A partition $\mu = (\mu_1, \dots, \mu_r) \in \partitionsn{n}$ is called an \emph{odd partition} if $\mu_i$ is odd for all $1 \leq i \leq r$. We denote the collection of odd partitions of $n$ by $\oddpartitions[n]$ and set $\oddpartitions{} := \bigcup_{n \geq 0} \oddpartitions[n]$. 

We call a partition $\lambda \in \partitionsn{n}$ \emph{strict} if all its nonzero parts are distinct. Let $\strictpartitions[n]$ be the set of all strict partitions of $n$ and set $\strictpartitions{} := \bigcup_{n \geq 0} \strictpartitions[n]$. To a strict partition $\lambda$ we can associate its \emph{shifted Young diagram} $\shifted{\lambda}$ which is obtained from the Young diagram (using English notation) by shifting all rows so that the $i$th row is shifted rightward by $(i-1)$ cells. 

\begin{example}
Let $\lambda = (6,5,2,1) \in \strictpartitions[14]$, then 

\vspace{4mm}

\begin{center}
\begin{tikzpicture}

\node at (-1,1.25) {$\lambda \quad = $};

\draw (2.5,1.5) rectangle (3,2);
\draw (2,1.5) rectangle (2.5,2);
\draw (1.5,1.5) rectangle (2,2);
\draw (1,1.5) rectangle (1.5,2);
\draw (.5,1.5) rectangle (1,2);
\draw (0,1.5) rectangle (.5,2);
\draw (2,1) rectangle (2.5,1.5);
\draw (1.5,1) rectangle (2,1.5);
\draw (1,1) rectangle (1.5,1.5);
\draw (.5,1) rectangle (1,1.5);
\draw (0,1) rectangle (.5,1.5);
\draw (.5,.5) rectangle (1,1);
\draw (0,.5) rectangle (.5,1);
\draw (0,0) rectangle (.5,.5);

\node at (3.1,.2) {$,$};

\node at (6,1.25) {$S(\lambda) \quad = $};

\draw (9.5,1.5) rectangle (10,2);
\draw (9,1.5) rectangle (9.5,2);
\draw (8.5,1.5) rectangle (9,2);
\draw (8,1.5) rectangle (8.5,2);
\draw (7.5,1.5) rectangle (8,2);
\draw (7,1.5) rectangle (7.5,2);
\draw (9.5,1) rectangle (10,1.5);
\draw (9,1) rectangle (9.5,1.5);
\draw (8.5,1) rectangle (9,1.5);
\draw (8,1) rectangle (8.5,1.5);
\draw (7.5,1) rectangle (8,1.5);
\draw (8.5,.5) rectangle (9,1);
\draw (8,.5) rectangle (8.5,1);
\draw (8.5,0) rectangle (9,.5);

\node at (10.1,.2) {$.$};

\end{tikzpicture}
\end{center}

\vspace{4mm}

\end{example}
Henceforth we reserve the variables $\lambda$ and $\nu$ for strict partitions and the variables $\mu$ and $\gamma$ for odd partitions.

For $\nu, \lambda \in \strictpartitions$, we write $\nu \nearrow \lambda$ (respectively $\nu \searrow \lambda$) when we can obtain $\lambda$ from $\nu$ by adding (resp. removing)  a single cell $\ydcell$. Set 
\begin{equation*}
\edgemulti{\nu}{\lambda} := \begin{cases}
2 & \text{if $\nu \nearrow \lambda$, $\length{\lambda} = \length{\nu}$,} \\
1 & \text{if $\nu \nearrow \lambda$ and $\length{\lambda} = \length{\nu} + 1$,}\\
0 & \text{otherwise.}
\end{cases}
\end{equation*}

\begin{definition}
The \emph{Schur graph} $\Schurgraph$ is the graded graph such that:

\begin{itemize}
\item the vertex set of $\Schurgraph$ corresponds to $\strictpartitions{}$ and the $n$th graded component is $\strictpartitions[n]$,
\item the number of edges from $\nu$ to $\lambda$ is given by $\edgemulti{\nu}{\lambda}$.
\end{itemize}

\end{definition}

The version of $\Schurgraph$ that we consider here is the same as that studied in \cite{Pet09}. Another version of the Schur graph without edge multiplicity was investigated in \cite{Bor97}. Both graphs have the same down transition functions (see \eqref{eqn-downtransition-def} below) so in principle we could have chosen to use either.

A {\emph{standard shifted Young tableau}} of shape $\lambda \in \strictpartitions[n]$ is a bijective labeling of the cells of $\shifted{\lambda}$ by the integers $\{1, \dots, n\}$ such that entries increase from left to right across rows and down columns. Let $\stanshiftedstrict{\lambda}$ be the number of standard shifted Young tableaux of shape $\lambda$. We can compute $\stanshiftedstrict{\lambda}$ explicitly as
\begin{equation*}
\stanshiftedstrict{\lambda} = \frac{n!}{\lambda_1!\lambda_2! \dots \lambda_r!} \prod_{1 \leq i < j \leq \length{\lambda}} \frac{\lambda_i - \lambda_j}{\lambda_i + \lambda_j}.
\end{equation*}
Following \cite{Pet09} we denote the number of paths from $\emptyset$ to $\lambda$ in $\Schurgraph$ by $\pathsfromzero{\lambda}$. Then 
\begin{equation*}
\pathsfromzero{\lambda} = 2^{|\lambda| - \length{\lambda}} \stanshiftedstrict{\lambda}.
\end{equation*}

In \cite{BO09} Borodin and Olshanski used coherent families of measures on partitions to construct infinite-dimensional diffusion processes. Petrov studied analogous processes on the Schur graph \cite{Pet09}. We review some basic definitions related to the latter of these works below. 

The {\emph{down transition function}} $p^{\downarrow}: \Schurgraph \times \Schurgraph \rightarrow \MB{Q}$  on $\Schurgraph$ is defined so that for $ \nu,\lambda \in \strictpartitions{}$, 
\begin{equation} \label{eqn-downtransition-def}
\downtransition{\lambda}{\nu} := \frac{\pathsfromzero{\nu}}{\pathsfromzero{\lambda}}\edgemulti{\nu}{\lambda}.
\end{equation}
In particular, when the first argument of $p^{\downarrow}$ is restricted to $\strictpartitions[n]$ and the second to $\strictpartitions[n-1]$ the function $p^{\downarrow}$ gives a Markov transition kernel from $\Schurgraph_{n}$ to $\Schurgraph_{n-1}$. 

A {\emph{coherent system}} on $\Schurgraph$ with respect to down transition function $p^{\downarrow}$ is a collection of probability measures $\{M_{n}\}_{n \geq 0}$, with $M_n$ a probability measure on $\Schurgraph_n$, such that if $\nu \in \strictpartitions[n-1]$, then
\begin{equation*}
M_{n-1}(\nu) = \sum_{\lambda \searrow \nu} \downtransition{\lambda}{\nu} M_{n}(\lambda).
\end{equation*}
By abuse of notation we write $M_n(\lambda)$ for $M_n(\{\lambda\})$. One choice of coherent system with respect to the $p^{\downarrow}$ defined by \eqref{eqn-downtransition-def} is the collection of {\emph{Plancherel measures}} $\{\Plancherel{n}\}_{n \geq 0}$ where for $\lambda \in \strictpartitions[n]$
\begin{equation*}
\Plancherel{n}(\lambda) := \frac{2^{\length{\lambda}-n}\pathsfromzero{\lambda}^2}{n!}.
\end{equation*}

Given the coherent system $\{\Plancherel{n}\}_{n \geq 0}$ and the down transition function $p^{\downarrow}$, the corresponding {\emph{up transition function}} $p^\uparrow: \Schurgraph \times \Schurgraph \rightarrow \MB{Q}$ is defined as
\begin{equation*}
p^{\uparrow}(\nu,\lambda) := \frac{\Plancherel{n+1}(\lambda)}{\Plancherel{n}(\nu)}\downtransition{\lambda}{\nu} = \frac{\pathsfromzero{\lambda}}{\pathsfromzero{\nu}(|\nu| + 1)}
\end{equation*}
(in \cite{Pet09} this is denoted by $p^{\uparrow}_{\infty}$).

In the next section we will make a connection between induction and restriction of simple Sergeev supermodules and $p^{\uparrow}$, $p^\downarrow$.

  
  \subsection{The Sergeev algebra and the twisted hyperoctahedral group} \label{sect-Sergeev-algebra}
  
Let $\Sy{n}$ be the symmetric group on $n$ elements with $s_1, s_2, \dots, s_{n-1}$ the Coxeter generators of $\Sy{n}$. We will work with the 
Clifford algebra $\Clif$ which is the unital associative algebra with $n$ generators: 
\begin{equation} \label{eqn-clif-def}
    \Clif:=\mathbb{C}\langle  c_1,\dots,c_n \;| \; c_i^2=-1, c_ic_j=-c_jc_i \text{ for } i\neq j \rangle.
\end{equation}

\begin{remark}
There is another common presentation of $\Clif$ in which $\Clif$ is generated by $c_1', \dots, c_n'$ subject to the relations $c_i'^2 = 1$ and $c_i'c_j' = -c_j'c_i'$ for $i \neq j$ (see \cite{WW11} and \cite{K05} for example). An equivalence between this presentation and \eqref{eqn-clif-def} can be obtained by setting $c_i = \sqrt{-1}c_i'$. 
\end{remark}

\begin{definition}
The {\emph{finite Sergeev algebra}} (also known as the {\emph{finite Hecke-Clifford algebra of type A}}) is
\begin{equation}
 \Ser\simeq \Clif \rtimes \mathbb{C}[S_n]
\end{equation}
where the action of $S_n$ on the Clifford generators is by permuting indices, i.e. 
\begin{equation*}
s_ic_i = c_{i+1}s_i, \quad s_ic_{i+1} = c_is_i, \quad \text{and} \quad s_ic_j = c_js_i  \quad \text{for $j \neq i,\;i+1$}.
\end{equation*}
\end{definition}
 
There is a $\MB{Z}/2\MB{Z}$-grading making $\Sern{n}$ into a superalgebra in which the $\Sy{n}$ generators $\{s_i, 1\leq i \leq n-1\}$ are even and the $\Clif$ generators $\{c_j, 1\leq j \leq n\}$ are odd. For homogeneous element $x \in \Sern{n}$ we write $|x|$ for the degree of $x$. The Sergeev algebras form a tower of superalgebras via the embedding $\Sern{n-1} \hookrightarrow \Sern{n}$ which sends $s_i \mapsto s_i$ for $1 \leq i \leq n-2$ and $c_i \mapsto c_i$ for $1\leq i \leq n-1$. We call this the {\emph{standard embedding}} and use it implicitly throughout this paper. We set $\Sern{0} = \MB{C}$.

It will be convenient to realize $\Ser$ as the quotient of a group algebra. Let $C_2$ denote the cyclic group of order two. Define the group
\begin{equation*}
\cliffordgroup{n} :=\langle z, a_1,\dots,a_n \;| \;a_i^2=z, a_ia_j=za_ja_i, za_i = a_iz, z^2 = 1\rangle.
\end{equation*}
The group $\cliffordgroup{n}$ is a double cover of $C_2^n$ via the short exact sequence:
\begin{equation} \label{eqn-short-exact-clifford}
1 \longrightarrow C_2 \longrightarrow \cliffordgroup{n} \longrightarrow C_2^n \longrightarrow 1
\end{equation}
in which $C_2$ is mapped to the subgroup $\{1,z\} \subset \cliffordgroup{n}$ and $z \in \cliffordgroup{n}$ is mapped to $1$.

\begin{definition}
The {\emph{twisted hyperoctahedral group}} is defined as $\widehat{B_n} := \cliffordgroup{n} \rtimes  \Sy{n}$ where $\Sy{n}$ acts on $\{a_i\}$ by permuting their indices, and acts trivially on $z$. 
\end{definition}

The algebra $\MB{C}[\widehat{B_n}]$ is also a superalgebra via the $\MB{Z}/2\MB{Z}$ grading which sets $\deg(a_j) = 1$ for $1\leq j\leq n$ and $\deg(z) = \deg(s_i) = 0$ for $1 \leq i \leq n-1$. Using \eqref{eqn-short-exact-clifford} one can show that $\hyperoctahedral{n}$ is a double cover of the hyperoctahedral group $\typeBWeyl{n} = C_2^n\rtimes  \Sy{n}$ (i.e. the type $B$ Weyl group) via the short exact sequence:

\begin{equation*}
1 \longrightarrow C_2 \longrightarrow \hyperoctahedral{n} \overset{f}{\longrightarrow} \typeBWeyl{n} \longrightarrow 1
\end{equation*}
where $f$ sends $z$ to $1$. On the other hand from a comparison of generators and relations it is clear that
\begin{equation}
\Ser\simeq \mathbb{C}[\widehat{B_n}]/\langle z +1\rangle.
\end{equation}
We denote the corresponding projection by $\hyperSergeevProj{n}: \MB{C}[\widehat{B_n}] \rightarrow \Sern{n}$.

Since $z$ is central and $z^2 = 1$, for any $\MB{C}[\hyperoctahedral{n}]$-supermodule $L$, we have that $z$ must act by multiplication by either $1$ or $-1$. Hence studying $\Sern{n}$-supermodules is equivalent to studying $\MB{C}[\hyperoctahedral{n}]$-supermodules in which $z$ acts as multiplication by $-1$ (these are commonly referred to as spin representations of $\hyperoctahedral{n}$). Furthermore, via the super Wedderburn Theorem it follows that 
\begin{equation} \label{eqn-Sergeev-hyperocto-connection}
\MB{C}[\hyperoctahedral{n}] \;\; \cong \;\; \MB{C}[\hyperoctahedral{n}] /\langle z-1 \rangle \oplus \MB{C}[{\hyperoctahedral{n}}]/\langle z+1 \rangle \;\; \cong \;\;\MB{C}[\typeBWeyl{n}] \oplus \Sern{n}.
\end{equation}

The group algebras $\MB{C}[{\hyperoctahedral{n}}]$ also form a tower of algebras with the embedding $\MB{C}[{\hyperoctahedral{n-1}}] \hookrightarrow \MB{C}[{\hyperoctahedral{n}}]$ which sends $s_i \mapsto s_i$, $a_i \mapsto a_i$, and $z \mapsto z$. Note that this maps the subalgebra $\MB{C}[\Sy{n-1}]$ into the subalgebra $\MB{C}[\Sy{n}]$ in the usual way, and projected down to $\Sern{n-1}$ and $\Sern{n}$ this becomes the standard embedding described above. We set $\mathbb{C}[\hyperoctahedral{0}]$ to be the subalgebra generated by $z$.

\begin{lemma} \label{lemma-cosets}
For $n \geq 2$, 
\begin{equation} \label{eqn-first-left-coset}
\{ \; s_i \dots s_{n-1}a_{n}^\epsilon \; | \: 1 \leq i \leq n, \epsilon \in \{0,1\}\}
\end{equation}
is a collection of left coset representatives of $\hyperoctahedral{n-1}$ in $\hyperoctahedral{n}$.
\end{lemma}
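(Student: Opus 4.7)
Because $\hyperoctahedral{n}$ is a double cover of the hyperoctahedral group $\typeBWeyl{n}$, its order equals $2 \cdot 2^n \cdot n!$, so the index $[\hyperoctahedral{n} : \hyperoctahedral{n-1}]$ equals $2n$, exactly matching the cardinality of the proposed set. It therefore suffices to prove existence: that every $g \in \hyperoctahedral{n}$ admits a decomposition $g = s_i \dots s_{n-1} a_n^\epsilon h$ for some $1 \leq i \leq n$, $\epsilon \in \{0,1\}$, and $h \in \hyperoctahedral{n-1}$. Uniqueness of $(i,\epsilon)$ then follows automatically from the matching counts, because $2n$ cosets produced by $2n$ representatives must be pairwise distinct to exhaust $\hyperoctahedral{n}$.

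The plan is to exploit the semidirect product structure $\hyperoctahedral{n} = \cliffordgroup{n} \rtimes \Sy{n}$ and factor on each side separately. I would write $g = \sigma c$ with $\sigma \in \Sy{n}$ and $c \in \cliffordgroup{n}$. The classical statement that $\{s_i \dots s_{n-1} : 1 \leq i \leq n\}$ is a complete set of left coset representatives for $\Sy{n-1}$ in $\Sy{n}$ gives a unique factorization $\sigma = (s_i \dots s_{n-1}) \sigma'$ with $\sigma' \in \Sy{n-1}$ and $i = \sigma(n)$. On the Clifford side, the defining relations yield a unique normal form $z^\delta a_1^{\epsilon_1} \dots a_n^{\epsilon_n}$ for elements of $\cliffordgroup{n}$, so $c$ factors uniquely as $c = c'\, a_n^\epsilon$ with $c' \in \cliffordgroup{n-1}$ and $\epsilon \in \{0,1\}$. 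Combining, $g = (s_i \dots s_{n-1}) \sigma' c' a_n^\epsilon$.

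The remaining task is to move $a_n^\epsilon$ leftward past $\sigma' c'$. Since $\sigma' \in \Sy{n-1}$ fixes $n$, it commutes with $a_n$; iterated use of the Clifford relation $a_j a_n = z a_n a_j$ for $j < n$ then yields $a_n^{-\epsilon} c' a_n^\epsilon = z^{\epsilon |c'|} c'$, where $|c'|$ is the parity of the number of Clifford generators appearing in the normal form of $c'$. Setting $h := z^{\epsilon |c'|} \sigma' c'$, one obtains $g = (s_i \dots s_{n-1}) a_n^\epsilon h$ with $h \in \hyperoctahedral{n-1}$, as desired. The principal bookkeeping hurdle is the $z$-factor produced by commuting $a_n$ past lower-indexed Clifford generators; but since $z \in \hyperoctahedral{n-1}$, this factor is absorbed harmlessly into the $\hyperoctahedral{n-1}$-component and does not alter the proposed representative $s_i \dots s_{n-1} a_n^\epsilon$.
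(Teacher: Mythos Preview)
Your argument is correct and follows essentially the same route as the paper: count the index, use the semidirect product structure $\hyperoctahedral{n}=\cliffordgroup{n}\rtimes \Sy{n}$ together with the standard coset representatives for $\Sy{n-1}$ in $\Sy{n}$, and commute $a_n$ past the $\Sy{n-1}$-part. The only cosmetic difference is that the paper writes the Clifford factor as $a_n^{\epsilon}a_Jz^{\beta}$ (with $a_n$ on the left), so that $a_n^{\epsilon}$ only needs to be moved past $\omega\in\Sy{n-1}$ and no $z$-bookkeeping arises; your version places $a_n$ on the right of $c'$ and therefore incurs the harmless $z^{\epsilon|c'|}$ factor, which you correctly absorb into $\hyperoctahedral{n-1}$.
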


Note that we follow the convention that the elements corresponding to $i = n$ are $a_n^\epsilon$ for $\epsilon \in \{0,1\}$ in Lemma \ref{lemma-cosets}.

\begin{proof}
The set $\{\; s_i \dots s_{n-1} \; | \; 1 \leq i \leq n \;\}$ forms a collection of minimal length left coset representatives of $\Sy{n-1}$ in $\Sy{n}$. It follows from this and the fact that $\hyperoctahedral{n} := \cliffordgroup{n} \rtimes \Sy{n}$ that any element $g \in \hyperoctahedral{n}$ can be written as $g = s_i \dots s_{n-1} \omega a_{n}^\epsilon a_J z^\beta$ where $1 \leq i \leq n$, $\omega \in \Sy{n-1}$, $a_J = a_{j_1}\dots a_{j_t}$ for some $J = \{j_1, \dots, j_t\} \subseteq \{1,2, \dots, n-1\}$, and $\epsilon, \beta \in \{0,1\}$. Since $a_{n}$ commutes with $\Sy{n-1}$ we have $x = s_i \dots s_{n-1} a_n^{\epsilon} \omega  a_J z^\beta$. Since $\omega  a_J z^\beta \in \hyperoctahedral{n-1}$, the set \eqref{eqn-first-left-coset} contains a set of left coset representatives. The result then follows from the observation that the size of \eqref{eqn-first-left-coset} is $2n$ while $|\hyperoctahedral{n}| = 2^{n+1}n!$ and $|\hyperoctahedral{n-1}| = 2^n(n-1)!$.
\end{proof}

\begin{remark}
If $g = s_i \dots s_{n-1}a_n^\epsilon$ for $\epsilon \in \{0,1\}$ then $g^{-1} = a_n^\epsilon z^{\epsilon} s_{n-1} \dots s_i$ and consequently while $\hyperSergeevProj{n}(g) = s_i \dots s_{n-1}c_n^\epsilon$, we have
\begin{equation*}
\hyperSergeevProj{n}(g^{-1}) = (-1)^{\epsilon} c_n^\epsilon s_{n-1} \dots s_i = (-1)^{|g|} c_n^\epsilon s_{n-1} \dots s_i.
\end{equation*}
\end{remark}

We use the inclusions $ \hyperoctahedral{1} \subset \hyperoctahedral{2} \subset \dots \subset \hyperoctahedral{n-1} \subset \hyperoctahedral{n} \subset \dots$ to iterate Lemma \ref{lemma-cosets} to get that for all $1 \leq k < n$, 
\begin{equation*}
\LcosHyper{n}{k} := \{ \: (s_{i_{n}} \dots s_{n-1} a_{n}^{\epsilon_{n}})( s_{i_{n-1}} \dots s_{n-2} a_{n-1}^{\epsilon_{n-1}}) \dots (s_{i_{k+1}} \dots s_k a_{k+1}^{\epsilon_{k+1}})\} \; 
\end{equation*}
\begin{equation*}
| \; 1 \leq i_j \leq j, \; \epsilon_j \in \{0,1\} \; \}
\end{equation*}
is a collection of left coset representatives of $\hyperoctahedral{k}$ in $\hyperoctahedral{n}$. Note in particular that
\begin{equation} \label{eqn-size-left-cosets}
|\LcosHyper{n}{k}| = \ds \frac{|\hyperoctahedral{n}|}{|\hyperoctahedral{k}|}=n^{\downarrow k}2^{n-k}
\end{equation}
where $n^{\downarrow k}$ is the falling factorial
\begin{equation*}
n^{\downarrow k} :=\frac{n!}{(n-k)!} = n(n-1)\dots(n-k+1)
\end{equation*}
for $1 \leq k < n$. The projection $\hyperSergeevProj{n}: \MB{C}[\hyperoctahedral{n}] \rightarrow \Sern{n}$ sends the elements of $\LcosHyper{n}{k}$ to distinct non-zero elements of $\Sern{n}$ and we set
\begin{equation*}
\LcosSer{n}{k} := \hyperSergeevProj{n}(\LcosHyper{n}{k}).
\end{equation*}

The set of conjugacy classes of $\hyperoctahedral{n}$ is indexed by pairs of partitions $(\rho_+,\rho_-)$ such that $|\rho_+| + |\rho_-| = n$, plus an additional parameter $\epsilon \in \{0,1\}$ when either $(\rho_+,\rho_-) = (\mu, \emptyset)$ with $\mu \in \oddpartitions[n]$ or $(\rho_+,\rho_-) = (\emptyset,\lambda)$ with $\lambda \in \strictpartitions[n]$. We denote this indexing set by $\conj$. A detailed description of the conjugacy class structure of $\hyperoctahedral{n}$ can be obtained by analyzing the conjugacy class structure of $\typeBWeyl{n}$ (which follows from the basic theory for the conjugacy class structure of wreath products \cite[Appendix B]{Mac15}) and investigating how the inverse image of these sets under the map $\gamma: \hyperoctahedral{n} \twoheadrightarrow \typeBWeyl{n}$ split into new conjugacy classes \cite{Re76}. The additional parameter $\epsilon \in \{0,1\}$ appears precisely when a conjugacy class in $\typeBWeyl{n}$ splits into two conjugacy classes in $\hyperoctahedral{n}$. Since we are ultimately interested in the center of $\Sern{n}$ which can be described using information about the conjugacy classes indexed by $(\mu, \emptyset, \epsilon)$ for $\mu \in \oddpartitions[n]$ and $\epsilon \in \{0,1\}$, we limit ourselves to considering these classes. We call this set of conjugacy classes $\conjOdd \subset \conj$. For $\beta \in \conj$, we write $\conj(\beta)$ for the corresponding conjugacy class.

We introduce a family of elements of $\Sern{n}$ which will be useful for constructing representatives for the conjugacy classes from $\conjOdd$. For  $\mu = (\mu_1,\dots,\mu_r) \in \oddpartitions[k]$, set $\pi_\mu = 1$ if $\mu = (1^k)$ and otherwise
\begin{equation*}
\pi_\mu := (s_{k-1}\dots s_{k-\mu_r+1})\dots(s_{\mu_1+\mu_2-1} \dots s_{\mu_1 + 1}) (s_{\mu_1 -1} \dots s_2s_1)
\end{equation*}
\begin{equation} \label{eqn-perm-decomp}
= (k,k-1,\dots,k-\mu_r+1)\dots(\mu_1+\mu_2,\dots,\mu_1+1) (\mu_1,\dots,2,1) \in \Sy{k}.
\end{equation}
For $n \geq k$ we define $\distinguishedperm{\mu}{n} := \tau_0 \pi_\mu \tau_0^{-1}$, where $\tau_0$ is the longest element of $\Sy{n}$ by Coxeter length. Notice that $\distinguishedperm{\mu}{n}$ has cycle type $(\mu,1^{n-k}) \in \oddpartitions[n]$ and fixes $1,2,\dots,n-k$ pointwise. 


\begin{proposition} 
The elements $\{\; \distinguishedperm{\mu}{n},\; z\distinguishedperm{\mu}{n} \; | \; \mu \in \oddpartitions[n]\}$ form a complete set of conjugacy class representatives for the conjugacy classes $\conjOdd$ in $\hyperoctahedral{n}$ with $\distinguishedperm{\mu}{n}$ corresponding to $(\mu,\emptyset,0) \in \conjOdd$ and $z\distinguishedperm{\mu}{n}$ corresponding to $(\mu,\emptyset,1) \in \conjOdd$.
\end{proposition}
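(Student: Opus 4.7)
The plan is to leverage the set-up given just before the proposition: by the description of $\hyperoctahedral{n}$-conjugacy classes, $\conjOdd$ consists of exactly $2|\oddpartitions[n]|$ classes, each lifting a class of the form $(\mu, \emptyset) \subset \typeBWeyl{n}$ under $\gamma$, with each such class of $\typeBWeyl{n}$ splitting into two classes of $\hyperoctahedral{n}$. It thus suffices to show (i) that each $\distinguishedperm{\mu}{n}$ projects under $\gamma$ into the class $(\mu, \emptyset)$, so both $\distinguishedperm{\mu}{n}$ and $z\distinguishedperm{\mu}{n}$ lie over $(\mu, \emptyset)$, and (ii) that $\distinguishedperm{\mu}{n}$ and $z\distinguishedperm{\mu}{n}$ are not conjugate in $\hyperoctahedral{n}$. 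The labeling of $\distinguishedperm{\mu}{n}$ with $\epsilon = 0$ and $z\distinguishedperm{\mu}{n}$ with $\epsilon = 1$ is then a convention.

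For (i), the factorization \eqref{eqn-perm-decomp} displays $\pi_\mu \in \Sy{n}$ as a product of disjoint cycles of lengths $\mu_1, \ldots, \mu_r$ (here $k = |\mu| = n$), so $\distinguishedperm{\mu}{n} = \tau_0 \pi_\mu \tau_0^{-1}$ lies in the $\Sy{n}$-subgroup of $\hyperoctahedral{n}$ and is a permutation of cycle type $\mu$. Since $\gamma$ is the identity on $\Sy{n}$-generators, its image is a pure permutation of cycle type $\mu$ in $\typeBWeyl{n}$, i.e., an element of the class $(\mu, \emptyset)$; multiplying by the central element $z$ (which satisfies $\gamma(z) = 1$) does not change this.

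For (ii), suppose $h \distinguishedperm{\mu}{n} h^{-1} = z\distinguishedperm{\mu}{n}$ for some $h \in \hyperoctahedral{n}$. Write $h = a_J s$ with $a_J = a_{j_1} \cdots a_{j_r}$ and $s \in \Sy{n}$ (any central $z$-factor in $h$ can be absorbed, since $z$ is central). Setting $g' := s \distinguishedperm{\mu}{n} s^{-1} \in \Sy{n}$, still of cycle type $\mu$, the semidirect product relation $a_i g' = g' a_{g'^{-1}(i)}$ yields
\begin{equation*}
h \distinguishedperm{\mu}{n} h^{-1} = a_J g' a_J^{-1} = g' \cdot \bigl( a_{g'^{-1}(J)} a_J^{-1} \bigr).
\end{equation*}
For this to equal $z\distinguishedperm{\mu}{n} \in z\Sy{n}$, the Clifford factor must reduce to $z^\epsilon \in \{1, z\}$, which (by uniqueness in the decomposition $\hyperoctahedral{n} = \cliffordgroup{n} \rtimes \Sy{n}$) forces $g'^{-1}(J) = J$ as sets, i.e., $J$ must be a union of complete $g'$-cycles.

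The central obstacle, and the main calculation, is then the following: using the anti-commutation relations $a_i a_j = z a_j a_i$ ($i \neq j$) and $a_i^{-1} = z a_i$, a cycle-by-cycle bookkeeping shows that when $J$ is a union of $g'$-cycles of lengths $m_1, \ldots, m_\ell$, one obtains $a_{g'^{-1}(J)} a_J^{-1} = z^{\sum_s (m_s + 1)}$ (each single-cycle contribution is $z^{m_s + 1}$, and these factors commute out of the product since $z$ is central). Because $\mu \in \oddpartitions[n]$, each $m_s$ is odd so each $m_s + 1$ is even, making the total exponent even; hence $a_{g'^{-1}(J)} a_J^{-1} = 1$. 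Therefore $h \distinguishedperm{\mu}{n} h^{-1} = g' \in \Sy{n}$, which contradicts $z\distinguishedperm{\mu}{n} \notin \Sy{n}$. This establishes (ii), and together with (i) and the counting $|\conjOdd| = 2|\oddpartitions[n]|$ completes the proof.
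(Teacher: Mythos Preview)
Your proof is correct. The paper's own proof is essentially a one-line citation to the conjugacy-class splitting results of Read \cite{Re76} (as surveyed in \cite{WW11}); you instead give a direct, self-contained argument. Your part (i) and the counting $|\conjOdd|=2|\oddpartitions[n]|$ match the paper's framing, while your part (ii)---the Clifford-factor computation showing $\distinguishedperm{\mu}{n}$ and $z\distinguishedperm{\mu}{n}$ are not conjugate---is precisely a hands-on reproof of the splitting statement the paper imports from the literature. What you gain is that the reader does not need to chase the reference; what you give up is brevity.

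Two small points of precision. First, uniqueness in the decomposition $\hyperoctahedral{n}=S_n\cdot\cliffordgroup{n}$ actually forces $g'=\distinguishedperm{\mu}{n}$ and the Clifford factor to equal $z$ exactly (not merely to lie in $\{1,z\}$); your conclusion that the factor is $1$ then gives the contradiction just the same, so this does not affect validity. Second, your single-cycle exponent is really $z^{m_s-1}$ (move the ``wrapped'' generator across $m_s-1$ others), but since $z^2=1$ this agrees with your stated $z^{m_s+1}$ and the parity conclusion for odd $m_s$ is unchanged.
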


\begin{proof}
This follows from the description of the conjugacy classes of $\typeBWeyl{n}$ and results on conjugacy class splitting in $\hyperoctahedral{n}$ \cite{Re76} (see \cite[Section 2.5]{WW11} for an overview).
\end{proof}

Note that under the projection map $\hyperSergeevProj{n}: \MB{C}[\hyperoctahedral{n}] \rightarrow \Sern{n}$, the two sets of conjugacy classes $\{\distinguishedperm{\mu}{n}\}_{\mu \in \strictpartitions[n]}$ and $\{z\distinguishedperm{\mu}{n}\}_{\mu \in \strictpartitions[n]}$ are identified since $\hyperSergeevProj{n}(z)=-1$.

The size of the conjugacy classes $\conj(\mu,\emptyset,\epsilon)$ will be important to us later. For $\rho \in \partitionsn{n}$, we denote by $z_\rho$ the size of the stabilizer of an element of $\Sy{n}$ of cycle type $\rho$ under the conjugation action. Recall that 
\begin{equation*}
z_\rho = \prod_{i \in \MB{Z}_{\geq 0}} i^{\partsofpartition{i}{\rho}} \partsofpartition{i}{\rho}!
\end{equation*}
where $\partsofpartition{i}{\rho}$ is the number of parts of size $i$ in $\rho$. 

\begin{lemma} \cite{Iv01} \label{lemma-size-of-orbit}
For $\mu \in \oddpartitions[n]$, $\epsilon \in \{0,1\}$
\begin{equation*}
|\conj(\mu,\emptyset,\epsilon)| = \frac{n!}{z_{\mu}} 2^{n - \length{\mu}}.
\end{equation*}
\end{lemma}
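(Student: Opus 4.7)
The plan is to reduce the count to a standard orbit--stabilizer computation in the hyperoctahedral group $\typeBWeyl{n}$ and then transfer the answer across the double cover $\gamma: \hyperoctahedral{n} \twoheadrightarrow \typeBWeyl{n}$, using the splitting behavior of conjugacy classes already discussed in the text.

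First I would compute the size of the conjugacy class $C_\mu$ of $\typeBWeyl{n}$ indexed by $(\mu, \emptyset)$, where $\mu \in \oddpartitions[n]$. Using the standard wreath-product description of conjugacy in $\typeBWeyl{n} = C_2^n \rtimes \Sy{n}$ \cite[Appendix B]{Mac15}, the centralizer of an element of cycle type $(\mu, \emptyset)$ has order $\prod_{i} (2i)^{\partsofpartition{i}{\mu}}\partsofpartition{i}{\mu}! = 2^{\length{\mu}} z_\mu$, so orbit--stabilizer gives
\begin{equation*}
|C_\mu| = \frac{|\typeBWeyl{n}|}{2^{\length{\mu}}z_\mu} = \frac{2^n n!}{2^{\length{\mu}} z_\mu} = \frac{n!}{z_\mu}\, 2^{n - \length{\mu}}.
\end{equation*}

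Next I would transfer this count up to $\hyperoctahedral{n}$. Because $\gamma$ is a $2$-to-$1$ surjection with kernel $\{1, z\}$, the preimage $\gamma^{-1}(C_\mu)$ has size $2|C_\mu|$. By the preceding discussion of conjugacy class splitting (citing \cite{Re76} and \cite[Section 2.5]{WW11}), for $\mu \in \oddpartitions[n]$ the class $C_\mu$ \emph{does} split, so $\gamma^{-1}(C_\mu) = \conj(\mu,\emptyset,0) \sqcup \conj(\mu,\emptyset,1)$. Since $z$ is central, left multiplication by $z$ commutes with conjugation and therefore defines a bijection $\conj(\mu,\emptyset,0) \to \conj(\mu,\emptyset,1)$ sending $\distinguishedperm{\mu}{n}$ to $z\distinguishedperm{\mu}{n}$. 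Hence the two halves have equal size, and each equals $|C_\mu| = \tfrac{n!}{z_\mu} 2^{n-\length{\mu}}$, proving the lemma.

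The only real content is the splitting fact in step two (that $\distinguishedperm{\mu}{n}$ and $z\distinguishedperm{\mu}{n}$ are not conjugate in $\hyperoctahedral{n}$ when $\mu$ consists of odd parts); this is the step that genuinely uses the hypothesis $\mu \in \oddpartitions[n]$, and it is already invoked via the preceding proposition, so the entire argument reduces to orbit--stabilizer and the 2-to-1 property of $\gamma$.
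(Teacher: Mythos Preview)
Your argument is correct. The centralizer computation in $\typeBWeyl{n}$ is exactly the standard wreath-product formula, and the transfer across the double cover using the $z$-bijection between $\conj(\mu,\emptyset,0)$ and $\conj(\mu,\emptyset,1)$ is valid.

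Note, however, that the paper does not actually prove this lemma: it is simply quoted from \cite{Iv01} without argument. So there is no ``paper's own proof'' to compare against; your write-up supplies the details that the paper omits by citation. The approach you take (compute in $\typeBWeyl{n}$, then lift) is the natural one and is presumably what Ivanov does as well.
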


$\Sern{n}$ has analogues to the classical Jucys-Murphy elements of $\MB{C}[S_{n}]$. These elements $\{\JM{i}\}^{n}_{i =1}$, which we also call Jucys-Murphy elements, are defined by
\begin{equation*}
\JM{1} := 0, \quad\quad \JM{k} := \sum_{j =1}^{k-1} (1+c_jc_k)(j,k).
\end{equation*}
They generate a commutative subalgebra of $\Sern{n}$ and their spectra have a combinatorial interpretation analogous to that of the classical Jucys-Murphy elements (\cite{Naz97}, \cite{VS08}, \cite{HKS11} \cite{Wan10}).


\subsection{The super representation theory of $\Sern{n}$ and $\widehat{B_n}$}

In this subsection we will review basic facts about the super representation theory of $\Sern{n}$. Recall that any $\Sern{n}$-supermodule is by definition a spin representation of $\hyperoctahedral{n}$, so all statements about $\Sern{n}$-supermodules also hold for $\hyperoctahedral{n}$ spin representations. We refer the reader to \cite{K05} and \cite{WW11} for thorough accounts of these topics as well as a review of super representation theory. 

Let $\lengthparity: \strictpartitions \rightarrow \{0,1\}$ be defined by
\begin{equation*}
\lengthparity(\lambda) := \begin{cases} 0 & \length{\lambda} \text{ is even}\\ 1 & \length{\lambda} \text{ is odd.}
\end{cases}
\end{equation*}
The function $\lengthparity$ will be useful for describing quantities related to the representation theory of $\Sern{n}$.

 \begin{theorem} \cite{Ser84} \label{thm-dim-simple}
  The set of simple $\Sern{n}$-supermodules are indexed by $\strictpartitions[n]$, and
for any $\lambda \in \strictpartitions[n]$, the simple $\Sern{n}$-supermodule indexed by $\lambda$ has dimension
\begin{equation*}
\dim(\SimpleSer{\lambda}) = 2^{n-\frac{\length{\lambda}-\lengthparity(\lambda)}{2}}\stanshiftedstrict{\lambda}.
\end{equation*}

\end{theorem}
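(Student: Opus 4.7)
The plan is to establish the classification and dimension formula in two stages. For the classification of simple $\Sern{n}$-supermodules, I would use the decomposition $\MB{C}[\hyperoctahedral{n}] \cong \MB{C}[\typeBWeyl{n}] \oplus \Sern{n}$ from \eqref{eqn-Sergeev-hyperocto-connection}: simple $\Sern{n}$-supermodules are precisely the simple spin supermodules of $\hyperoctahedral{n}$, i.e.\ those on which $z$ acts by $-1$. By the super-Wedderburn theorem, each simple supermodule $L$ over a finite-dimensional semisimple superalgebra over $\MB{C}$ is of type M (with $\End_{\mathrm{super}}(L) \cong \MB{C}$) or of type Q (with $\End_{\mathrm{super}}(L) \cong \MB{C}[c]/(c^2+1)$). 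A supercharacter count on the split conjugacy classes $\conjOdd$, which are in natural bijection with $\oddpartitions[n]$, identifies the isomorphism classes of simple $\Sern{n}$-supermodules with $\strictpartitions[n]$ via Euler's identity $|\oddpartitions[n]| = |\strictpartitions[n]|$, with $\lambda$ yielding a type M module when $\lengthparity(\lambda) = 0$ and a type Q module when $\lengthparity(\lambda) = 1$.

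For the dimension formula, I would proceed by induction on $n$, using the branching rule for the standard inclusion $\Sern{n-1} \hookrightarrow \Sern{n}$: the restriction $\mathrm{Res}^{\Sern{n}}_{\Sern{n-1}} \SimpleSer{\lambda}$ decomposes into copies of $\SimpleSer{\nu}$ for $\nu \nearrow^{-1} \lambda$, with multiplicities governed by the edge weights $\edgemulti{\nu}{\lambda}$ of $\Schurgraph$ and an additional factor of $2$ when the type changes between M and Q. Unwinding this recursion yields $\dim \SimpleSer{\lambda}$ as a weighted sum over upward paths $\emptyset \to \lambda$ in $\Schurgraph$, and comparing with $\pathsfromzero{\lambda} = 2^{|\lambda|-\length{\lambda}} \stanshiftedstrict{\lambda}$ (after extracting a uniform power of $2$) gives the claimed formula. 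As a consistency check, super-Wedderburn combined with the M/Q dichotomy yields
\[
\dim \Sern{n} = \sum_{\lengthparity(\lambda)=0} (\dim \SimpleSer{\lambda})^2 + \sum_{\lengthparity(\lambda)=1} \tfrac{1}{2}(\dim \SimpleSer{\lambda})^2 = 2^n n!,
\]
which upon substitution of the claimed formula reduces to Schur's classical identity $\sum_{\lambda \in \strictpartitions[n]} 2^{n-\length{\lambda}} \stanshiftedstrict{\lambda}^2 = n!$ for projective characters of $\Sy{n}$.

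The main obstacle is carefully tracking the powers of $2$ arising from three distinct sources: the edge multiplicity $\edgemulti{\nu}{\lambda} = 2$ when $\length{\lambda} = \length{\nu}$, the factor of $2$ in the branching rule between type M and type Q modules, and the normalization $2^{|\lambda|-\length{\lambda}}$ in $\pathsfromzero{\lambda}$. The parity correction $\lengthparity(\lambda)$ in the exponent $n - (\length{\lambda}-\lengthparity(\lambda))/2$ precisely accounts for the halving in the type Q contribution to the sum of squares; the four cases $(\lengthparity(\nu), \lengthparity(\lambda)) \in \{0,1\}^2$ must be handled separately in the inductive step. The base case $\dim \SimpleSer{\emptyset} = 1$ is immediate since $\Sern{0} = \MB{C}$.
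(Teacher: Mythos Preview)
The paper does not actually prove this statement: Theorem~\ref{thm-dim-simple} is stated with the citation \cite{Ser84} and no proof environment follows it. It is quoted as a known result from Sergeev's original paper, so there is no ``paper's own proof'' to compare your proposal against.

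That said, your outline is a reasonable sketch of how this result is established in the standard references (e.g.\ Kleshchev~\cite{K05} or Wan--Wang~\cite{WW11}). One caution: you propose to derive the dimension formula by induction using the branching rule, but in this paper the branching rule is Theorem~\ref{thm-Ser-branching}, itself quoted from~\cite{K05} and stated \emph{after} Theorem~\ref{thm-dim-simple}. If you were writing a self-contained proof you would need to establish the branching multiplicities independently (via Jucys--Murphy spectra or the Young seminormal form for $\Sern{n}$, as in Nazarov~\cite{Naz97}), rather than invoking a theorem that in the literature is often proved \emph{using} the dimension formula. Your bookkeeping of the powers of $2$ and the M/Q dichotomy is correct in spirit, and the consistency check via the sum-of-squares identity is a nice touch, but as written the argument is circular relative to the logical order of the cited sources.
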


The algebras $\{\Sern{n}\}_{n \geq 0}$ are semisimple. When $N$ and $M$ are $\Sern{n}$-supermodules we write
\begin{equation*}
[M : N] := \dim(\Hom_{\Sern{n}}(M,N)).
\end{equation*}

The theorem below describes the branching for $\{\Sern{n}\}_{n \geq 0}$.

\begin{theorem} \cite{K05} \label{thm-Ser-branching}
Let $\lambda \in \strictpartitions[n]$ and $\nu \in \strictpartitions[n-1]$, then:
\begin{enumerate}
\item \label{eqn-branching-res}
$\ds [\SimpleSer{\nu}:\res^{\Sern{n}}_{\Sern{n-1}} \SimpleSer{\lambda}] = \begin{cases}
2^{\frac{2 + \length{\nu} - \lengthparity(\nu) - \length{\lambda} + \lengthparity(\lambda)}{2}} & \lambda \searrow \nu \\
0 & \text{otherwise.}
\end{cases}$
\item \label{eqn-branching-ind}
$ \ds [\ind^{\Sern{n}}_{\Sern{n-1}} \SimpleSer{\nu}:\SimpleSer{\lambda}] = \begin{cases}
2^{\frac{2 + \length{\nu} - \lengthparity(\nu) - \length{\lambda} + \lengthparity(\lambda)}{2}} & \nu \nearrow \lambda, \\
0 & \text{otherwise.}
\end{cases} $
\end{enumerate}
\end{theorem}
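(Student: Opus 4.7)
The plan is to prove part (1) directly and deduce part (2) from super Frobenius reciprocity. Since $\Sern{n}$ and $\Sern{n-1}$ are semisimple, the super-reciprocity isomorphism
\[
\Hom_{\Sern{n-1}}(\SimpleSer{\nu},\res^{\Sern{n}}_{\Sern{n-1}} \SimpleSer{\lambda}) \;\cong\; \Hom_{\Sern{n}}(\ind^{\Sern{n}}_{\Sern{n-1}} \SimpleSer{\nu}, \SimpleSer{\lambda})
\]
(once the parity-type factor for Hom spaces involving Type Q simples has been accounted for) transforms (1) into (2), so it suffices to work on the restriction side.

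The first real step is to show that $[\SimpleSer{\nu}: \res^{\Sern{n}}_{\Sern{n-1}} \SimpleSer{\lambda}] = 0$ unless $\lambda \searrow \nu$. This is where the Jucys--Murphy elements $\{\JM{k}\}$ introduced at the end of Section~\ref{sect-Sergeev-algebra} enter: the squares $\JM{k}^2$ commute, and on $\SimpleSer{\lambda}$ the spectrum of $\JM{n}^2$ is determined by the contents of the cells of the shifted diagram $\shifted{\lambda}$ (\cite{Naz97,VS08,HKS11}). The generalized eigenspace of $\JM{n}^2$ associated to a removable cell of $\shifted{\lambda}$ is a $\Sern{n-1}$-sub-supermodule isotypic for $\SimpleSer{\nu}$, where $\nu$ is obtained from $\lambda$ by deleting that cell; no other simple can appear.

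With the support of the decomposition pinned down, one writes $\res^{\Sern{n}}_{\Sern{n-1}} \SimpleSer{\lambda} \cong \bigoplus_{\nu : \lambda \searrow \nu} m(\lambda,\nu) \SimpleSer{\nu}$ and uses Theorem~\ref{thm-dim-simple} together with the elementary recurrence $\stanshiftedstrict{\lambda} = \sum_{\nu: \lambda \searrow \nu} \stanshiftedstrict{\nu}$ (which holds by peeling off the corner containing the entry $n$ in each standard shifted Young tableau of shape $\lambda$) to check that the proposed formula $m(\lambda,\nu) = 2^{(2+\length{\nu}-\lengthparity(\nu)-\length{\lambda}+\lengthparity(\lambda))/2}$ is consistent with the global identity $\dim \SimpleSer{\lambda} = \sum_{\nu} m(\lambda,\nu)\dim \SimpleSer{\nu}$. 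The exponent equals $0$ or $1$, so $m(\lambda,\nu) \in \{1,2\}$, and it tracks whether removing the chosen corner of $\shifted{\lambda}$ shortens $\length{\lambda}$ as well as whether $\length{\lambda}$ is even or odd. The main obstacle I anticipate is pinning down $m(\lambda,\nu)$ per individual removable cell rather than just in aggregate; this requires carefully accounting for the Type M versus Type Q dichotomy of both $\SimpleSer{\lambda}$ and $\SimpleSer{\nu}$ (since the endomorphism superalgebra of a Type Q simple is two-dimensional), which is exactly what supplies the explicit factor of $2$ appearing in super Frobenius reciprocity and hence in the per-cell branching multiplicities.
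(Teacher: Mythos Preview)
The paper does not prove this theorem; it is quoted from Kleshchev's book \cite{K05} and stated without proof. So there is no ``paper's own proof'' to compare against, and your task reduces to whether your outline is a viable route to the result.

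Your sketch is in the right spirit and matches the standard approach: the generalized eigenspace decomposition of $\res^{\Sern{n}}_{\Sern{n-1}}\SimpleSer{\lambda}$ under $\JM{n}^2$ does give the isotypic decomposition, and super Frobenius reciprocity does transfer (1) to (2) without any extra correction factor (the isomorphism $\Hom_{\Sern{n-1}}(\SimpleSer{\nu},\res \SimpleSer{\lambda})\cong\Hom_{\Sern{n}}(\ind \SimpleSer{\nu},\SimpleSer{\lambda})$ is an isomorphism of superspaces, so dimensions agree on the nose). You are also right that the global dimension identity alone cannot pin down each $m(\lambda,\nu)$ individually when $\lambda$ has several removable cells.

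The piece you flag as an obstacle is the genuine content. What actually determines $m(\lambda,\nu)$ cell by cell is the interaction of the Clifford generator $c_n$ with the $\JM{n}^2$-eigenspace: $c_n$ anticommutes with $\JM{n}$ and commutes with $\Sern{n-1}$, so it acts on each eigenspace as an odd $\Sern{n-1}$-intertwiner. Whether this forces the eigenspace to be one or two copies of $\SimpleSer{\nu}$ then depends on whether the eigenvalue is zero (i.e.\ whether the removed cell has content $0$, equivalently whether $\length{\nu}=\length{\lambda}-1$) and on the type ($\mathsf{M}$ or $\mathsf{Q}$) of $\SimpleSer{\nu}$. Running through the four cases $(\lengthparity(\lambda),\lengthparity(\nu))\in\{0,1\}^2$ with this in hand yields exactly the exponent $\tfrac{1}{2}(2+\length{\nu}-\lengthparity(\nu)-\length{\lambda}+\lengthparity(\lambda))$. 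So your plan is correct, but the missing case analysis is the heart of the argument and should be written out rather than gestured at.
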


The next theorem describes how $\Sern{n}$ as a left $\Sern{n}$-supermodule decomposes into a direct sum of simple $\Sern{n}$-supermodules.

\begin{proposition} \label{prop-reg-rep-decomp}
$\Sern{n}$ as a left $\Sern{n}$-supermodule decomposes as
\begin{equation*}
\Sern{n} \cong \bigoplus_{\lambda \in \strictpartitions[n]} (\SimpleSer{\lambda})^{\oplus \dim(\SimpleSer{\lambda})/2^{\lengthparity(\lambda)}}.
\end{equation*}
\end{proposition}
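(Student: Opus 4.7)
The plan is to invoke the super-Wedderburn theorem. Since $\Sern{n}$ is semisimple (stated immediately after Theorem~\ref{thm-dim-simple}) with simple supermodules indexed by $\strictpartitions[n]$, there is a block decomposition $\Sern{n} \cong \bigoplus_{\lambda \in \strictpartitions[n]} B_\lambda$ in which each $B_\lambda$ is a simple superalgebra. By the classification of simple superalgebras over $\MB{C}$, each $B_\lambda$ must be isomorphic either to a matrix superalgebra $M(p|q)$ (``type M'') or to a queer matrix superalgebra $Q(m)$ (``type Q'').

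First I would identify the types. The standard fact for Sergeev algebras (cf.\ \cite{K05}, \cite{WW11}) is that $B_\lambda$ is of type M exactly when $\lengthparity(\lambda) = 0$ and of type Q exactly when $\lengthparity(\lambda) = 1$. Conceptually this records whether $\SimpleSer{\lambda}$ admits a nonzero odd self-intertwiner, which is forced by the parity of the number of Clifford generators acting nontrivially on a highest weight vector; writing this up would amount to citing the relevant statement from those references.

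Next I would read off the multiplicity of $\SimpleSer{\lambda}$ inside $B_\lambda$, viewed as a left supermodule over itself. If $B_\lambda \cong \End_{\MB{C}}(\SimpleSer{\lambda})$ is of type M, it has dimension $(\dim \SimpleSer{\lambda})^2$ and splits columnwise as $(\SimpleSer{\lambda})^{\oplus \dim \SimpleSer{\lambda}}$. If $B_\lambda \cong Q(\dim \SimpleSer{\lambda}/2)$ is of type Q, it has dimension $(\dim \SimpleSer{\lambda})^2/2$ and decomposes as $(\SimpleSer{\lambda})^{\oplus \dim \SimpleSer{\lambda}/2}$. Both cases are packaged uniformly by the multiplicity $\dim(\SimpleSer{\lambda})/2^{\lengthparity(\lambda)}$, which is exactly what the proposition claims.

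Finally I would verify consistency via the dimension identity
\begin{equation*}
\sum_{\lambda \in \strictpartitions[n]} \frac{(\dim \SimpleSer{\lambda})^2}{2^{\lengthparity(\lambda)}} = 2^n n! = \dim \Sern{n},
\end{equation*}
which follows from Theorem~\ref{thm-dim-simple} combined with the classical shifted Plancherel identity $\sum_{\lambda \in \strictpartitions[n]} 2^{n - \length{\lambda}} \stanshiftedstrict{\lambda}^2 = n!$. The main obstacle is the type-identification step; the rest is formal super-Wedderburn bookkeeping together with this one-line combinatorial check.
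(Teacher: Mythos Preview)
Your argument is correct and uses the same underlying dichotomy (type $\mathsf{M}$ versus type $\mathsf{Q}$) as the paper, but you reach the multiplicities by a slightly different route. You apply the super-Wedderburn theorem directly to $\Sern{n}$, identify each simple block as $M(p\,|\,q)$ or $Q(m)$, and read off the column decomposition of each block as a left module over itself. The paper instead passes to the group $\hyperoctahedral{n}$: since $\Sern{n}$ is a direct summand of $\MB{C}[\hyperoctahedral{n}]$ by \eqref{eqn-Sergeev-hyperocto-connection}, the multiplicity of $\SimpleSer{\lambda}$ in $\Sern{n}$ equals the multiplicity of the spin representation $\wh{\SimpleSer{\lambda}}$ in the regular representation of $\hyperoctahedral{n}$, and the latter is computed using the \emph{ungraded} theory (each ungraded simple appears with multiplicity equal to its dimension), together with the fact that a type~$\mathsf{Q}$ supermodule splits into two ungraded simples of half the dimension.

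Both approaches are short; yours is arguably more self-contained in the super setting, while the paper's avoids needing to know the explicit left-module structure of $Q(m)$ and instead leverages the group picture already set up in Section~\ref{sect-Sergeev-algebra}. Your concluding dimension check via the shifted Plancherel identity is a nice sanity check but is not logically needed once the block-by-block multiplicities are established.
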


\begin{proof}
For $\lambda \in \strictpartitions[n]$, the multiplicity of $\SimpleSer{\lambda}$ in $\Sern{n}$ is equal to the multiplicity of the corresponding spin representation $\wh{\SimpleSer{\lambda}}$ of $\hyperoctahedral{n}$ in the left regular super representation. If $\wh{\SimpleSer{\lambda}}$ is of type $\mathsf{M}$ (i.e. $\lengthparity(\lambda) = 0$), then $\wh{\SimpleSer{\lambda}}$ is simple as an ungraded $\hyperoctahedral{n}$-module \cite[Section 12.2]{K05}.
When $\wh{\SimpleSer{\lambda}}$ is of type $\mathsf{Q}$ (i.e. $\lengthparity(\lambda) = 1$) then considered as an ungraded $\MB{C}[\hyperoctahedral{n}]$-module, $\wh{\SimpleSer{\lambda}}$ splits into a direct sum of two simple $\MB{C}[\hyperoctahedral{n}]$-modules, each with dimension $\dim(\wh{\SimpleSer{\lambda}})/2$. The result then follows from the ungraded representation theory of $\hyperoctahedral{n}$. 
\end{proof}

We can now relate $\downtransition{\cdot}{\cdot}$ and $\uptransition{\cdot}{\cdot}$ to the representation theory of the algebras $\{\Sern{n}\}_{n \geq 0}$.

\begin{proposition} \label{lemma-transition-prob-rep}
Let $\lambda \in \strictpartitions[n]$ and $\nu \in \strictpartitions[n-1]$. Then
\begin{enumerate}
\item \label{enum-lemma-cotransition-rep}
$ \ds \downtransition{\lambda}{\nu} = \frac{[\SimpleSer{\nu}:\res^{\Sern{n}}_{\Sern{n-1}} \SimpleSer{\lambda}]\dim(\SimpleSer{\nu})}{\dim(\SimpleSer{\lambda})},$
\item \label{enum-lemma-transition-rep}
$ \ds 2^{\lengthparity(\lambda) - \lengthparity(\nu)}\uptransition{\nu}{\lambda} = \frac{[\ind^{\Sern{n}}_{\Sern{n-1}} \SimpleSer{\nu}:\SimpleSer{\lambda}]\dim(\SimpleSer{\lambda})}{\dim(\ind^{\Sern{n}}_{\Sern{n-1}} \SimpleSer{\nu})}.
$
\end{enumerate}
\end{proposition}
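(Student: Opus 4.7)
The plan is to verify both identities by direct computation, reducing each side to an explicit expression in $\stanshiftedstrict{\lambda}$ and $\stanshiftedstrict{\nu}$ via the formulas
\[
\pathsfromzero{\lambda} = 2^{|\lambda|-\length{\lambda}}\,\stanshiftedstrict{\lambda}, \qquad \dim(\SimpleSer{\lambda}) = 2^{n - \tfrac{\length{\lambda}-\lengthparity(\lambda)}{2}}\,\stanshiftedstrict{\lambda},
\]
together with the branching multiplicity from Theorem \ref{thm-Ser-branching}.

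For part (1), I would first compute the left-hand side by substituting the closed form for $\pathsfromzero{\cdot}$ into \eqref{eqn-downtransition-def}, obtaining
\[
\downtransition{\lambda}{\nu} = 2^{\length{\lambda} - \length{\nu} - 1}\,\edgemulti{\nu}{\lambda}\cdot\frac{\stanshiftedstrict{\nu}}{\stanshiftedstrict{\lambda}}.
\]
Case-checking on whether $\length{\lambda}=\length{\nu}$ (so $\edgemulti{\nu}{\lambda}=2$) or $\length{\lambda}=\length{\nu}+1$ (so $\edgemulti{\nu}{\lambda}=1$), the prefactor $2^{\length{\lambda}-\length{\nu}-1}\,\edgemulti{\nu}{\lambda}$ collapses to $1$ in both cases, so $\downtransition{\lambda}{\nu}=\stanshiftedstrict{\nu}/\stanshiftedstrict{\lambda}$. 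For the right-hand side, using Theorems \ref{thm-dim-simple} and \ref{thm-Ser-branching}, the total exponent of $2$ is
\[
\frac{2 + \length{\nu} - \lengthparity(\nu) - \length{\lambda} + \lengthparity(\lambda)}{2} + (n-1) - \frac{\length{\nu}-\lengthparity(\nu)}{2} - n + \frac{\length{\lambda}-\lengthparity(\lambda)}{2} = 0,
\]
and the remaining factor is again $\stanshiftedstrict{\nu}/\stanshiftedstrict{\lambda}$.

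For part (2), the main additional ingredient is the identity $\dim(\ind^{\Sern{n}}_{\Sern{n-1}} \SimpleSer{\nu}) = 2n\,\dim(\SimpleSer{\nu})$, which follows from $\dim(\Sern{n}) = 2^n n!$ together with the standard embedding. Substituting the dimension formula and the explicit value of the induction multiplicity from Theorem \ref{thm-Ser-branching}.\eqref{eqn-branching-ind} into the right-hand side, the exponent of $2$ simplifies to $1 + \length{\nu} - \lengthparity(\nu) - \length{\lambda} + \lengthparity(\lambda)$. On the left-hand side, writing $|\nu|+1=n$ and substituting the closed form for $\pathsfromzero{\cdot}$ gives
\[
2^{\lengthparity(\lambda)-\lengthparity(\nu)}\uptransition{\nu}{\lambda} = \frac{2^{\lengthparity(\lambda)-\lengthparity(\nu)+1+\length{\nu}-\length{\lambda}}\,\stanshiftedstrict{\lambda}}{n\,\stanshiftedstrict{\nu}},
\]
whose exponent of $2$ matches that on the right-hand side, establishing the claim.

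The only step requiring care is the case analysis for part (1): depending on whether the added cell in $\nu \nearrow \lambda$ lies in a new row or not, both $\edgemulti{\nu}{\lambda}$ and the branching multiplicity behave asymmetrically with respect to $\lengthparity$, but these asymmetries cancel cleanly. The correction factor $2^{\lengthparity(\lambda)-\lengthparity(\nu)}$ in part (2) is precisely the arithmetic bookkeeping needed so that the $\lengthparity$-dependence of the branching rule matches the $\lengthparity$-free up-transition function.
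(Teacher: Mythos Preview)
Your proof is correct and follows exactly the approach indicated in the paper: the paper's own proof simply asserts that both parts follow from the branching rules of Theorem~\ref{thm-Ser-branching} and the dimension formula of Theorem~\ref{thm-dim-simple}, while you have carried out the explicit arithmetic verification that the paper leaves to the reader.
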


\begin{proof}
Both \ref{enum-lemma-cotransition-rep} and \ref{enum-lemma-transition-rep} follow from the branching rules for Sergeev algebras in Theorem \ref{thm-Ser-branching} and the dimension formula in Theorem \ref{thm-dim-simple}.
\end{proof}

For $\lambda \in \strictpartitions[n]$, we denote by $\HyperChar{\lambda}$ the character corresponding to simple $\MB{C}[\hyperoctahedral{n}]$-supermodule $\SimpleHyper{\lambda}$. This descends to a character $\SerChar{\lambda}$ for simple $\Sern{n}$-supermodule $\SimpleSer{\lambda}$ with
\begin{equation*}
\SerChar{\lambda}(\hyperSergeevProj{n}(g)) = \HyperChar{\lambda}(g).
\end{equation*}
The {\emph{normalized character}} $\normSerChar{\lambda}$ is defined such that for $x \in \Sern{n}$
\begin{equation*}
\normSerChar{\lambda}(x) := \frac{\SerChar{\lambda}(x)}{\dim(\SimpleSer{\lambda})} = \frac{\SerChar{\lambda}(x)}{\SerChar{\lambda}(1)}.
\end{equation*}

\begin{proposition} \label{prop-char-determined-by-odd-partitions}
For $\lambda \in \strictpartitions[n]$, the character $\SerChar{\lambda}$ is uniquely determined by its value on the elements $\{\distinguishedperm{\mu}{n} \; | \; \mu \in \oddpartitions[n]\}$.
\end{proposition}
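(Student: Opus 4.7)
The plan is to reduce the statement to showing that the spin character $\HyperChar{\lambda}$ on $\hyperoctahedral{n}$ vanishes on all conjugacy classes except those in $\conjOdd$. Since $\hyperSergeevProj{n}:\MB{C}[\hyperoctahedral{n}]\twoheadrightarrow\Sern{n}$ is surjective and $\SerChar{\lambda}\circ\hyperSergeevProj{n} = \HyperChar{\lambda}$, the character $\SerChar{\lambda}$ is equivalent to the class function $\HyperChar{\lambda}$ on $\hyperoctahedral{n}$, which is in turn determined by its values on conjugacy class representatives.

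Combining the conjugacy class description in this section with the previous proposition, classes of $\hyperoctahedral{n}$ fall into three types: (i) non-splitting classes (those without the extra parameter $\epsilon$); (ii) splitting classes $\conj(\mu,\emptyset,\epsilon)$ in $\conjOdd$, represented by $\distinguishedperm{\mu}{n}$ and $z\distinguishedperm{\mu}{n}$; and (iii) splitting classes $\conj(\emptyset,\lambda',\epsilon)$ with $\lambda'\in\strictpartitions[n]$. For type (i), one has $g\sim zg$, and since $z$ acts as $-1$ on every spin supermodule, $\HyperChar{\lambda}(g) = \HyperChar{\lambda}(zg) = -\HyperChar{\lambda}(g) = 0$. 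For type (ii), the same $z$-action yields $\HyperChar{\lambda}(z\distinguishedperm{\mu}{n}) = -\HyperChar{\lambda}(\distinguishedperm{\mu}{n})$, so the values on these classes are determined by the values at the $\distinguishedperm{\mu}{n}$.

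The main obstacle will be establishing vanishing on the third type. The strategy is to exploit the $\MB{Z}/2\MB{Z}$-graded structure of $\SimpleHyper{\lambda}$: minimal representatives of $\conj(\emptyset,\lambda',\epsilon)$ are products of a permutation with exactly $\length{\lambda'}$ Clifford-like generators, giving $\MB{Z}/2\MB{Z}$-parity $\length{\lambda'}\pmod 2$. When this parity is odd, the matrix of the action on $\SimpleHyper{\lambda}$ is block off-diagonal and its ordinary trace vanishes. When the parity is even, one combines the inner automorphism of $\Sern{n}$ given by conjugation by $J = c_1c_2\cdots c_n$ (a direct computation shows $Js_iJ^{-1} = -s_i$ and $Jc_iJ^{-1} = (-1)^{n+1}c_i$) with classical vanishing results for spin characters of $\hyperoctahedral{n}$ (cf.~\cite{WW11}). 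Once these vanishings are in hand, the only conjugacy classes on which $\HyperChar{\lambda}$ is potentially nonzero are those in $\conjOdd$, and on these classes the values are determined up to sign by the values at the $\distinguishedperm{\mu}{n}$; therefore $\SerChar{\lambda}$ is entirely determined by $\{\SerChar{\lambda}(\distinguishedperm{\mu}{n})\}_{\mu\in\oddpartitions[n]}$.
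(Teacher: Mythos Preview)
Your overall strategy is the right one and is essentially what underlies Ivanov's result: lift to $\hyperoctahedral{n}$, show that the spin character $\HyperChar{\lambda}$ vanishes on every conjugacy class outside $\conjOdd$, and observe that on $\conjOdd$ the values are determined (via the sign flip coming from $z$) by the values at the $\distinguishedperm{\mu}{n}$. The paper does not redo this computation; it simply invokes \cite[Proposition~1.9]{Iv01} and the conjugation invariance of characters, so your write-up is considerably more explicit than what the paper provides.

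There is, however, a genuine gap in your treatment of the split classes $(\emptyset,\lambda',\epsilon)$ with $\ell(\lambda')$ even. Carrying out your $J$-conjugation gives
\[
JgJ^{-1}=\operatorname{sign}(\sigma)\,(-1)^{\ell(\lambda')(n-1)}\,g=(-1)^{n(\ell(\lambda')+1)}\,g,
\]
which for even $\ell(\lambda')$ equals $(-1)^n g$; thus the $J$-argument forces $\SerChar{\lambda}(g)=0$ only when $n$ is odd. For even $n$ you defer to unspecified ``classical vanishing results'' in \cite{WW11}, and without a precise statement that is not a proof. A clean uniform fix is to show directly that for every $h\notin\conjOdd$ the image $\hyperSergeevProj{n}(h)\in\Sern{n}$ is conjugate to its negative by some (possibly odd) unit of $\Sern{n}$; this is exactly what \cite[Propositions~3.4 and 3.9]{R17} establish (and what this paper uses later in Proposition~\ref{prop-conj-classes-go-to-zero}). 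Since the ordinary trace is invariant under conjugation by any invertible element, one immediately gets $\SerChar{\lambda}(\hyperSergeevProj{n}(h))=-\SerChar{\lambda}(\hyperSergeevProj{n}(h))=0$. Either pin down the exact statement from \cite{WW11} or adopt this argument; the $J$-trick by itself does not close the even-$n$, even-$\ell(\lambda')$ case.
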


\begin{proof}
This follows from a similar statement \cite[Proposition 1.9]{Iv01} where each element $\distinguishedperm{\mu}{n}$ is replaced by an element of $\Sern{n}$ that is conjugate to it. Since characters are constant across conjugacy classes, the result follows.
\end{proof}

Given Proposition \ref{prop-char-determined-by-odd-partitions}, for $\mu \in \oddpartitions[k]$ with $k \leq n$, it is convenient to write $\SerChar{\lambda}(\mu \cup 1^{n-k}) := \SerChar{\lambda}(\distinguishedperm{\mu}{n})$.
 

\subsection{The centers of $\Sern{n}$ and $\MB{C}[\hyperoctahedral{n}]$}

As a superalgebra the center of $\Sern{n}$ breaks up into even and odd components of super-commutative elements. In this paper we will focus on $\evencenter{n}$, which corresponds to the ungraded center of $\Sern{n}$. Note that $\evencenter{n}$ is exactly those elements that act on all simple $\Sern{n}$-modules as multiplication by a scalar. Following \cite{Iv01} we will construct a basis for $\evencenter{n}$ via the surjection $\pi:\MB{C}[{\hyperoctahedral{n}}] \twoheadrightarrow \Sern{n}$.

Recall that the set $\conj$ indexes the conjugacy classes of $\hyperoctahedral{n}$. For $\beta \in \conj$ set
\begin{equation*}
\wh{C}_{\beta} := \sum_{g \in \conj(\beta)} g.
\end{equation*}
It is clear that $\{\wh{C}_{\beta}\}_{\beta \in \conj}$ is a basis for the ungraded center of $\MB{C}[\hyperoctahedral{n}]$. In \cite{Iv01}, Ivanov uses the subset of this basis corresponding to elements of $\conjOdd$ of the form $(\mu,\emptyset,0)$ to construct a basis for $\evencenter{n}$. For $\mu \in \oddpartitions[n]$ let
\begin{equation*}
C_\mu := \hyperSergeevProj{n}(\wh{C}_{(\mu,\emptyset,0)}).
\end{equation*}

\begin{proposition} \cite{Iv01} \label{prop-center-basis}
The set $\{C_\mu \; | \; \mu \in \oddpartitions[n]\}$ is a linear basis for $\evencenter{n}$.
\end{proposition}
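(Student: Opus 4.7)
My plan is to verify the three ingredients of a basis for $\{C_\mu\}_{\mu \in \oddpartitions[n]}$: each $C_\mu$ lies in $\evencenter{n}$, the set is linearly independent, and $|\oddpartitions[n]|$ equals $\dim \evencenter{n}$.

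For membership, the class sum $\wh{C}_{(\mu,\emptyset,0)}$ is central in $\MB{C}[\hyperoctahedral{n}]$ by construction, so its image $C_\mu$ under the surjective algebra homomorphism $\hyperSergeevProj{n}$ lies in $Z(\Sern{n})$. The representative $\distinguishedperm{\mu}{n}$ belongs to $\Sy{n} \subset \hyperoctahedral{n}$ and hence has trivial Clifford part, i.e., parity $0$. Since parity in $\hyperoctahedral{n}$ (the number of $a_i$-factors modulo $2$) is invariant under conjugation and $\hyperSergeevProj{n}$ preserves the $\MB{Z}/2$-grading, every summand of $C_\mu$ is even, so $C_\mu \in \evencenter{n}$.

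For linear independence I would exploit the description $\ker \hyperSergeevProj{n} = (z+1)\MB{C}[\hyperoctahedral{n}]$. If $\sum_\mu \alpha_\mu C_\mu = 0$, then $X := \sum_\mu \alpha_\mu \wh{C}_{(\mu,\emptyset,0)} = (z+1)Y$ for some $Y$. Reading off the coefficient at a group-basis element $g$ gives $[X]_g = [Y]_g + [Y]_{zg}$; substituting $g \mapsto zg$ yields the symmetry $[X]_{zg} = [X]_g$. Because the class $(\mu,\emptyset)$ splits in the cover $\hyperoctahedral{n} \twoheadrightarrow \typeBWeyl{n}$, the $z$-translate $z \cdot \conj(\mu,\emptyset,0) = \conj(\mu,\emptyset,1)$ is disjoint from $\conj(\mu,\emptyset,0)$. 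Hence for $g \in \conj(\mu,\emptyset,0)$ we have $[X]_g = \alpha_\mu$ while $[X]_{zg} = 0$, which forces every $\alpha_\mu$ to vanish.

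For the dimension count, I would apply the super-analogue of Wedderburn's theorem together with Theorem \ref{thm-dim-simple}: $\Sern{n}$ decomposes as a direct sum of simple superalgebras indexed by $\strictpartitions[n]$, of type $\mathsf{M}$ when $\length{\lambda}$ is even and of type $\mathsf{Q}$ when $\length{\lambda}$ is odd. In either case the even part of the ungraded center of a simple summand is one-dimensional (spanned by the identity of that summand), so $\dim \evencenter{n} = |\strictpartitions[n]| = |\oddpartitions[n]|$, the last equality being Euler's identity. Combined with the linear independence above, $\{C_\mu\}_{\mu \in \oddpartitions[n]}$ is a basis.

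I expect the linear-independence step to be the main obstacle. Centrality and the dimension count rest on standard structural facts, but linear independence requires correctly isolating why the \emph{split} classes $(\mu,\emptyset,\epsilon)$ produce nonzero images, while every even non-split class is $z$-invariant and therefore annihilated by $\hyperSergeevProj{n}$; one must then leverage the concrete kernel $(z+1)\MB{C}[\hyperoctahedral{n}]$ together with the disjointness of $\conj(\mu,\emptyset,0)$ from its $z$-translate to extract the contradiction.
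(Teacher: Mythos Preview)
Your argument is correct in all three parts. The membership step is routine; the linear-independence step is the clever one and works exactly as you say, since $X \in (z+1)\MB{C}[\hyperoctahedral{n}]$ forces $[X]_g = [X]_{zg}$, while the splitting of the class $(\mu,\emptyset)$ guarantees that $\conj(\mu,\emptyset,0)$ and its $z$-translate $\conj(\mu,\emptyset,1)$ are disjoint, so $[X]_g = \alpha_\mu$ and $[X]_{zg} = 0$ are incompatible unless $\alpha_\mu = 0$. The dimension count is also fine: each simple summand of $\Sern{n}$, whether of type $\mathsf{M}$ or $\mathsf{Q}$, contributes a one-dimensional even piece to the ungraded center (for type $\mathsf{Q}$ the ungraded center is two-dimensional, but the extra generator is odd), so $\dim \evencenter{n} = |\strictpartitions[n]| = |\oddpartitions[n]|$ by Euler's identity.

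There is nothing to compare against in the paper itself: the proposition is stated with a citation to \cite{Iv01} and no proof is given. Your write-up is a clean, self-contained substitute for that citation.
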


We now define a scaled version of Ivanov's basis of $\evencenter{n}$ which naturally appears from the center of the twisted Heisenberg category.

\begin{definition} \label{def-class-sums}
For $k \leq n$ and  $\mu \in \oddpartitions[k]$, define
\begin{equation*}
\octoclasssum{\mu}{n} := \sum_{g \in \LcosHyper{n}{n-k}} g\distinguishedperm{\mu}{n} g^{-1}
\end{equation*}
and 
\begin{equation*}
\Sergeevclasssum{\mu}{n} := \pi(\octoclasssum{\mu}{n}). 
\end{equation*}
\end{definition}

\begin{proposition} \label{prop-conj-classes-go-to-zero}
Let $k\leq n$ and $\mu \in \oddpartitions[k]$ then:
\begin{enumerate} 
\item $\octoclasssum{\mu}{n} \in Z(\MB{C}[\hyperoctahedral{n}])$ and $\Sergeevclasssum{\mu}{n} \in \evencenter{n}$.
\item \label{item-scaled-basis} 
$\ds \octoclasssum{\mu}{n} = 2^{k-n+\length{\mu}}\frac{z_{\mu \cup 1^{n-k}}}{(n-k)!} \wh{C}_{(\mu \cup 1^{n-k},\emptyset,0)}$.

\item \label{prop-conj-classes-go-to-zero-2} Let $h \in \hyperoctahedral{n}$ be an element not belonging to the same conjugacy class as $\distinguishedperm{\mu}{n}$ or $z\distinguishedperm{\mu}{n}$ for some $\mu \in \oddpartitions[n]$ (i.e. $h$ does not belong to a conjugacy class indexed by $(\mu,\emptyset,\epsilon)$, $\epsilon \in \{0,1\}$). Then
\begin{equation*}
\hyperSergeevProj{n}\Big(\sum_{g \in\hyperoctahedral{n}} g h g^{-1}\Big) = 0.
\end{equation*}
\end{enumerate}
\end{proposition}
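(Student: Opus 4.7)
The plan is to exploit a key structural property of $\distinguishedperm{\mu}{n}$: since $\distinguishedperm{\mu}{n}=\tau_0\pi_\mu\tau_0^{-1}$ with $\tau_0$ the longest element of $\Sy{n}$, the resulting permutation acts nontrivially only on $\{n-k+1,\ldots,n\}$ and fixes $\{1,2,\ldots,n-k\}$ pointwise. Consequently $\distinguishedperm{\mu}{n}$ commutes with every generator of $\hyperoctahedral{n-k}$ (the $s_i$'s permute fixed indices, the $a_j$'s carry fixed indices, and $z$ is central). This commutativity immediately gives part (1): for any $g_1,g_2$ in the same left $\hyperoctahedral{n-k}$-coset, the conjugates $g_1\distinguishedperm{\mu}{n}g_1^{-1}$ and $g_2\distinguishedperm{\mu}{n}g_2^{-1}$ coincide, so $\octoclasssum{\mu}{n}$ is independent of the choice of representatives and equals $\tfrac{1}{|\hyperoctahedral{n-k}|}\sum_{g\in\hyperoctahedral{n}} g\distinguishedperm{\mu}{n}g^{-1}$, a manifestly central element of $\MB{C}[\hyperoctahedral{n}]$. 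Since $\distinguishedperm{\mu}{n}\in\Sy{n}$ has grade $\bar{0}$ and conjugation preserves grade, each summand is even, so $\Sergeevclasssum{\mu}{n}$ lies in $\evencenter{n}$.

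For part (2) the idea is to pass to the conjugacy class sum via orbit-stabilizer: $\sum_{g\in\hyperoctahedral{n}} g\distinguishedperm{\mu}{n}g^{-1}=|C_{\hyperoctahedral{n}}(\distinguishedperm{\mu}{n})|\,\wh{C}_{(\mu\cup 1^{n-k},\emptyset,0)}$. The centralizer order is $|\hyperoctahedral{n}|/|\conj(\mu\cup 1^{n-k},\emptyset,0)|$, and combining $|\hyperoctahedral{n}|=2^{n+1}n!$ with Lemma~\ref{lemma-size-of-orbit} applied to the odd partition $\mu\cup 1^{n-k}\in\oddpartitions[n]$ (noting $\length{\mu\cup 1^{n-k}}=\length{\mu}+(n-k)$) yields an explicit value. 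Dividing by $|\hyperoctahedral{n-k}|=2^{n-k+1}(n-k)!$ then produces the claimed scalar in front of $\wh{C}_{(\mu\cup 1^{n-k},\emptyset,0)}$; this step is essentially a routine count with Lemma~\ref{lemma-size-of-orbit}.

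For part (3) the strategy is a $z$-pairing cancellation driven by $\ker(\hyperSergeevProj{n})=\langle 1+z\rangle$. Say that an $\hyperoctahedral{n}$-conjugacy class $[h]$ is \emph{non-split} when $zh\in[h]$: the elements of $[h]$ then partition into pairs $\{h',zh'\}$, and since $\hyperSergeevProj{n}(zh')=-\hyperSergeevProj{n}(h')$ each pair contributes zero to $\hyperSergeevProj{n}(\wh{C}_{[h]})$. Because $\sum_{g\in\hyperoctahedral{n}} ghg^{-1}$ is a scalar multiple of $\wh{C}_{[h]}$, its image under $\hyperSergeevProj{n}$ vanishes for every non-split class. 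By the standard classification of conjugacy-class splittings in the double cover $\hyperoctahedral{n}\twoheadrightarrow\typeBWeyl{n}$ \cite{Re76} (recalled in Section~\ref{sect-Sergeev-algebra}), the only split classes are $(\mu,\emptyset,\epsilon)$ with $\mu\in\oddpartitions[n]$ and $(\emptyset,\lambda,\epsilon)$ with $\lambda\in\strictpartitions[n]$; the hypothesis excludes the former. The principal obstacle is thus the remaining $(\emptyset,\lambda,\epsilon)$ case, where the representative elements live outside the even part and the $z$-pairing does not apply directly; here one either restricts attention to the even part of $Z(\Sern{n})$ (sufficient for the applications of this proposition) or supplements the argument with a separate sign-symmetry on the Clifford generators.
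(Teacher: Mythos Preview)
Your arguments for parts (1) and (2) are essentially the same as the paper's: the paper also observes that $\distinguishedperm{\mu}{n}$ commutes with $\hyperoctahedral{n-k}$, deduces that $\octoclasssum{\mu}{n}$ is a scalar multiple of the full conjugation sum (hence central), and then obtains (2) from orbit--stabilizer together with Lemma~\ref{lemma-size-of-orbit}.

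For part (3), however, your proof has a genuine gap that you yourself flag. Your $z$-pairing argument is correct for all non-split classes, but you are left with the split classes of type $(\emptyset,\lambda,\epsilon)$, $\lambda\in\strictpartitions[n]$, and your two proposed escapes are not adequate as written: restricting to the even center does not cover the statement (elements in these classes have $\MB{Z}/2\MB{Z}$-degree $\length{\lambda}\bmod 2$, so some are even), and ``a separate sign-symmetry on the Clifford generators'' is not an argument until you actually produce the conjugating element and verify the sign.

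The paper takes a different and uniform route for (3): rather than splitting into the $z$-paired non-split case and the residual split case, it invokes \cite[Propositions 3.4 and 3.9]{R17} to assert directly that for every $h$ outside the classes $(\mu,\emptyset,\epsilon)$, the image $\hyperSergeevProj{n}(h)\in\Sern{n}$ is conjugate (in $\Sern{n}$) to its own negative. Then in the projected conjugation sum each term appears together with its negative and the total vanishes. This single mechanism handles both your non-split case (where $zh\sim h$, so $\hyperSergeevProj{n}(h)=-\hyperSergeevProj{n}(zh)$ is conjugate to $-\hyperSergeevProj{n}(h)$) and the $(\emptyset,\lambda,\epsilon)$ case simultaneously. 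To complete your approach you would need to supply exactly this ``conjugate to negative'' statement for the $(\emptyset,\lambda,\epsilon)$ classes, which amounts to reproving the relevant part of \cite{R17}.
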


\begin{proof}
\begin{enumerate}
\item Recall that we defined $\distinguishedperm{\mu}{n}$ as a distinguished element from the conjugacy class of $\hyperoctahedral{n}$ indexed by $(\mu\cup1^{n-k}, \emptyset, 0)$. Since $\distinguishedperm{\mu}{n}$ is by definition a product of $s_{n-1},\dots,s_{n-k+1}$ it commutes with $\hyperoctahedral{n-k}$. Since $\LcosHyper{n}{n-k}$ is a collection of left coset representatives of $\hyperoctahedral{n-k}$ in $\hyperoctahedral{n}$ any element $g \in \hyperoctahedral{n}$ can be written uniquely as $g = \sigma h$ for $\sigma \in \LcosHyper{n}{n-k}$ and $h \in \hyperoctahedral{n-k}$. Thus $g\distinguishedperm{\mu}{n}g^{-1} = \sigma h \distinguishedperm{\mu}{n} h^{-1}\sigma^{-1} =  \sigma \distinguishedperm{\mu}{n} \sigma^{-1}$ and hence $g\distinguishedperm{\mu}{n}g^{-1}$ is completely determined by the left coset to which $g$ belongs. It follows that 
\begin{equation} \label{eqn-octo-class-sum-rln}
\sum_{g \in \hyperoctahedral{n}} g \distinguishedperm{\tilde{\mu}}{n} g^{-1} = |\hyperoctahedral{n-k}|\sum_{g \in \LcosHyper{n}{n-k}} g\distinguishedperm{\mu}{n} g^{-1} = |\hyperoctahedral{n-k}|\displaystyle \octoclasssum{\mu}{n}
\end{equation}
and $\octoclasssum{\mu}{n} \in Z(\MB{C}[\hyperoctahedral{n}])$ since $\octoclasssum{\mu}{n}$ is a scalar multiple of a central element. 
  
 Finally, note that $\hyperSergeevProj{n}$ is a degree-preserving homomorphism and $\octoclasssum{\mu}{n}$ is even, so $\pi(\octoclasssum{\mu}{n})=\Sergeevclasssum{\mu}{n} \in \evencenter{n}$.

\item It follows from Lemma \ref{lemma-size-of-orbit} and the orbit stabilizer theorem that
\begin{equation*}
\sum_{g \in \hyperoctahedral{n}} g\distinguishedperm{\mu}{n} g^{-1} = 2^{\length{\mu} + 1}z_{\mu \cup 1^{n-k}}\wh{C}_{(\mu\cup 1^{n-k},\emptyset,0)}.
\end{equation*}
Then \eqref{eqn-octo-class-sum-rln} implies that
\begin{equation*}
|\hyperoctahedral{n-k}|\octoclasssum{\mu}{n} = 2^{\length{\mu} + 1}z_{\mu \cup 1^{n-k}}\wh{C}_{(\mu\cup 1^{n-k},\emptyset,0)}.
\end{equation*}
The result follows.

\item It follows from the proofs of  \cite[Proposition 3.4]{R17} and \cite[Proposition 3.9]{R17} that the image in $\Sern{n}$, $\hyperSergeevProj{n}(h)$, of any such element is conjugate to its negative. Hence each term in the image of the conjugacy class sum of any such $h$ appears in a pair with its negative. The result follows. \qedhere
\end{enumerate}
\end{proof}

It follows from Proposition \ref{prop-conj-classes-go-to-zero}.\ref{item-scaled-basis} that $\{\Sergeevclasssum{\mu}{n} \; | \; \mu \in \oddpartitions[n]\}$ is also a linear basis of $\evencenter{n}$. 

For a spin representation  $\SimpleHyper{\lambda}$ of $\hyperoctahedral{n}$, the corresponding character $\HyperChar{\lambda}$ is a homomorphism when restricted to $Z(\MB{C}[\hyperoctahedral{n}])_0$ and $\SerChar{\lambda}$ and $\normSerChar{\lambda}$ are homomorphisms on $\evencenter{n}$. 

\begin{proposition} \label{prop-character-classsum}
Let $\lambda \in \strictpartitions[n]$ and $\mu \in \oddpartitions[k]$. Then 
\begin{equation*}
\normSerChar{\lambda}(\Sergeevclasssum{\mu}{n}) = 2^kn^{\downarrow k}\frac{\SerChar{\lambda}(\distinguishedperm{\mu}{n})}{\SerChar{\lambda}(1)}.
\end{equation*}
\end{proposition}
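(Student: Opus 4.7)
The plan is to unfold the definition of $\Sergeevclasssum{\mu}{n}$ and exploit the fact that $\HyperChar{\lambda}$ is a class function on $\hyperoctahedral{n}$. No deeper input is needed; the result is essentially the statement that the character value on the coset-parametrized sum equals $|\LcosHyper{n}{n-k}|$ times the character value on a single representative.

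First, using Definition~\ref{def-class-sums} together with the fact that $\hyperSergeevProj{n}$ is a $\MB{C}$-linear homomorphism, I would write
\[
\Sergeevclasssum{\mu}{n} \;=\; \hyperSergeevProj{n}\!\left(\octoclasssum{\mu}{n}\right) \;=\; \sum_{g \in \LcosHyper{n}{n-k}} \hyperSergeevProj{n}\bigl(g\,\distinguishedperm{\mu}{n}\,g^{-1}\bigr).
\]
Then I would apply $\SerChar{\lambda}$ to both sides. Using $\SerChar{\lambda}\circ \hyperSergeevProj{n} = \HyperChar{\lambda}$ and the class-function property of $\HyperChar{\lambda}$, each summand yields the same value:
\[
\SerChar{\lambda}\bigl(\hyperSergeevProj{n}(g\,\distinguishedperm{\mu}{n}\,g^{-1})\bigr) \;=\; \HyperChar{\lambda}(g\,\distinguishedperm{\mu}{n}\,g^{-1}) \;=\; \HyperChar{\lambda}(\distinguishedperm{\mu}{n}) \;=\; \SerChar{\lambda}(\distinguishedperm{\mu}{n}),
\]
so the sum collapses to $\SerChar{\lambda}(\Sergeevclasssum{\mu}{n}) = |\LcosHyper{n}{n-k}|\,\SerChar{\lambda}(\distinguishedperm{\mu}{n})$.

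Next, I would compute the coset count directly from the orders $|\hyperoctahedral{n}| = 2^{n+1}n!$ and $|\hyperoctahedral{n-k}| = 2^{n-k+1}(n-k)!$, giving
\[
|\LcosHyper{n}{n-k}| \;=\; \frac{|\hyperoctahedral{n}|}{|\hyperoctahedral{n-k}|} \;=\; 2^k \cdot \frac{n!}{(n-k)!} \;=\; 2^k\, n^{\downarrow k}.
\]
Dividing through by $\SerChar{\lambda}(1) = \dim(\SimpleSer{\lambda})$ then converts the statement to the normalized character and produces the claimed formula. There is no real obstacle here: the proof is a short sequence of routine manipulations, and the only bookkeeping point to watch is that the index set in Definition~\ref{def-class-sums} consists of left coset representatives of $\hyperoctahedral{n-k}$ (not $\hyperoctahedral{k}$) in $\hyperoctahedral{n}$, which is exactly what produces the factor $2^k n^{\downarrow k}$ appearing on the right-hand side.
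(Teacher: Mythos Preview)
Your proof is correct and follows exactly the paper's approach: the paper's one-line proof (``This follows from the fact that characters are invariant under conjugation and \eqref{eqn-size-left-cosets}'') is precisely what you have written out in detail, lifting to $\hyperoctahedral{n}$ to use the class-function property of $\HyperChar{\lambda}$ and then counting $|\LcosHyper{n}{n-k}| = 2^k n^{\downarrow k}$.
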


\begin{proof}
This follows from the fact that characters are invariant under conjugation and \eqref{eqn-size-left-cosets}.
\end{proof}

Another basis for $\evencenter{n}$ is given by the set of central idempotents of $\Sern{n}$ corresponding to the simple $\Sern{n}$-supermodules. We denote these central idempotents by $\{ \centralidem{\lambda} \; | \; \lambda \in \strictpartitions[n]\}$. 

\begin{lemma} \label{lemma-central-idempotent}
For $\lambda \in \strictpartitions[n]$, the central idempotent $\centralidem{\lambda} \in \Sern{n}$ corresponding to the simple $\Sern{n}$-supermodule $\SimpleSer{\lambda}$ can be written as
\begin{equation*}
\centralidem{\lambda} = 2^{\frac{-\length{\lambda}-\lengthparity(\lambda)}{2}}\frac{\stanshiftedstrict{\lambda}}{n!} \sum_{\mu \in \oddpartitions[n]} \SerChar{\lambda}(\mu)C_\mu.
\end{equation*}
\end{lemma}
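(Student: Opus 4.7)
The plan is to lift this identity to $\MB{C}[\hyperoctahedral{n}]$, where the standard group-algebra formula for primitive central idempotents applies, and then project back down via $\hyperSergeevProj{n}$. Since $\centralidem{\lambda} \in \evencenter{n}$, Propositions \ref{prop-center-basis} and \ref{prop-conj-classes-go-to-zero} guarantee a unique expansion $\centralidem{\lambda} = \sum_{\mu \in \oddpartitions[n]} \alpha^\lambda_\mu C_\mu$, so the task reduces to pinning down the coefficients. I split into cases based on $\lengthparity(\lambda)$: when $\lengthparity(\lambda)=0$, $\wh{L}^\lambda$ is already ungradedly irreducible, and the standard formula $\wh{e}^\lambda = \frac{\dim L^\lambda}{|\hyperoctahedral{n}|} \sum_{g \in \hyperoctahedral{n}} \HyperChar{\lambda}(g^{-1})\, g$ gives the desired central idempotent; when $\lengthparity(\lambda)=1$, $\wh{L}^\lambda$ splits ungradedly as $V_1 \oplus V_2$ with $\dim V_i = \dim L^\lambda/2$, and the relevant element is $\wh{e}_{V_1}+\wh{e}_{V_2} = \frac{\dim L^\lambda/2}{|\hyperoctahedral{n}|}\sum_g \HyperChar{\lambda}(g^{-1})\, g$. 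Since in both cases the lift lies in the spin summand of \eqref{eqn-Sergeev-hyperocto-connection}, the two cases unify to
\[
\centralidem{\lambda} \;=\; \hyperSergeevProj{n}\!\left(\frac{\dim L^\lambda}{2^{\lengthparity(\lambda)}|\hyperoctahedral{n}|} \sum_{g \in \hyperoctahedral{n}} \HyperChar{\lambda}(g^{-1})\, g\right).
\]

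Next, I invoke Proposition \ref{prop-conj-classes-go-to-zero}(\ref{prop-conj-classes-go-to-zero-2}) to kill every conjugacy class not indexed by $(\mu,\emptyset,\epsilon)$ with $\mu \in \oddpartitions[n]$. The surviving classes come in pairs $\conj(\mu,\emptyset,0)$ and $\conj(\mu,\emptyset,1)$, related by multiplication by the central element $z$; because $z$ acts as $-1$ on any spin representation (so $\HyperChar{\lambda}(zg)=-\HyperChar{\lambda}(g)$) while also $\hyperSergeevProj{n}(z)=-1$, these two sign flips cancel and the two classes contribute equally to the image, doubling the sum over $\mu$. Since $\distinguishedperm{\mu}{n}$ and $\distinguishedperm{\mu}{n}^{-1}$ share the same $\Sy{n}$ cycle type (all parts odd), they are $\Sy{n}$-conjugate, hence $\hyperoctahedral{n}$-conjugate, and thus both lie in $\conj(\mu,\emptyset,0)$, so $\HyperChar{\lambda}(g^{-1})=\chi^\lambda(\mu)$ for every $g\in\conj(\mu,\emptyset,0)$. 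Consequently
\[
\centralidem{\lambda} \;=\; \frac{2\dim L^\lambda}{2^{\lengthparity(\lambda)}|\hyperoctahedral{n}|} \sum_{\mu \in \oddpartitions[n]} \chi^\lambda(\mu)\, C_\mu,
\]
and substituting $|\hyperoctahedral{n}| = 2^{n+1}n!$ together with the dimension formula $\dim L^\lambda = 2^{n-(\length{\lambda}-\lengthparity(\lambda))/2}\stanshiftedstrict{\lambda}$ from Theorem \ref{thm-dim-simple} collapses the prefactor to $2^{-(\length{\lambda}+\lengthparity(\lambda))/2}\stanshiftedstrict{\lambda}/n!$, matching the claim.

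The main obstacle I anticipate is the type $\mathsf{M}$/type $\mathsf{Q}$ bookkeeping: one must verify that in the type $\mathsf{Q}$ case the \emph{sum} $\wh{e}_{V_1}+\wh{e}_{V_2}$ is what lifts $\centralidem{\lambda}$ rather than either summand alone, since the extra factor of $\tfrac12$ it introduces is precisely what generates the $2^{-\lengthparity(\lambda)/2}$ factor in the final formula. Every other step is either routine bookkeeping or a direct consequence of already-established facts about the structure of $\hyperoctahedral{n}$ and its spin representations.
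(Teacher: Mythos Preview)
Your proposal is correct and follows essentially the same approach as the paper: lift to $\MB{C}[\hyperoctahedral{n}]$, apply the standard central idempotent formula (handling the type $\mathsf{M}$/$\mathsf{Q}$ dichotomy via the factor $2^{-\lengthparity(\lambda)}$), project via $\hyperSergeevProj{n}$, kill all classes outside $\conjOdd$ with Proposition~\ref{prop-conj-classes-go-to-zero}(\ref{prop-conj-classes-go-to-zero-2}), and use the $z$-pairing of the two remaining classes for each $\mu$ to produce the factor of $2$. Your treatment is in fact slightly more explicit than the paper's in one respect: you justify replacing $\HyperChar{\lambda}(g^{-1})$ by $\chi^\lambda(\mu)$ via the observation that $\distinguishedperm{\mu}{n}$ and its inverse are $\Sy{n}$-conjugate (all cycles odd), whereas the paper writes the character evaluated at $g$ directly.
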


\begin{proof}
The definition of $\Sern{n}$ implies that $\centralidem{\lambda}$ is the image of the corresponding central idempotent $\wh{\centralidem{\lambda}}$ in $\hyperoctahedral{n}$ under the projection map $\hyperSergeevProj{n}$. If $\SimpleSer{\lambda}$ is of type $\mathsf{M}$, then $\SimpleHyper{\lambda}$ is simple as an ungraded $\MB{C}[\hyperoctahedral{n}]$-module; if $\SimpleSer{\lambda}$ is of type $\mathsf{Q}$, then $\SimpleHyper{\lambda}$ viewed as an ungraded $\MB{C}[\hyperoctahedral{n}]$-module breaks into the direct sum of two simples of equal dimension, $\SimpleHyper{\lambda}  = \wh{L}^{\lambda_0} \oplus \wh{L}^{\lambda_{1}}$. Thus the central idempotent corresponding to $\SimpleHyper{\lambda}$ is given by
\begin{equation*} \label{eqn-general-central-idempotent}
\wh{\centralidem{\lambda}} = \frac{\dim(\SimpleHyper{\lambda})}{2^{\delta(\lambda)}|\hyperoctahedral{n}|}\sum_{\beta \in \conj} \HyperChar{\lambda}(g)\wh{C}_\beta,
\end{equation*}
where, if $\SimpleSer{\lambda}$ is of type $\mathsf{Q}$, then $\HyperChar{\lambda}(g) = \HyperChar{\lambda_0}(g) + \HyperChar{\lambda_1}(g)$.

Applying $\hyperSergeevProj{n}$ to $\wh{\centralidem{\lambda}}$, Proposition \ref{prop-conj-classes-go-to-zero}.\ref{prop-conj-classes-go-to-zero-2} implies that most terms go to zero and we are left with
\begin{equation*}
\centralidem{\lambda} =  2^{\frac{- \length{\lambda}-\lengthparity(\lambda)}{2}-1}\frac{\stanshiftedstrict{\lambda}}{n!} \sum_{\beta \in \conjOdd} \SerChar{\lambda}(\beta)\hyperSergeevProj{n}(\wh{C}_\beta).
\end{equation*}
Recall that $\beta \in \conjOdd$ contains pairs $\beta = (\mu,\emptyset,0)$ and $\bar{\beta} = (\mu, \emptyset,1)$ for $\mu \in \oddpartitions[n]$ such that if $x \in \conj(\beta)$ then $zx \in \conj(\bar{\beta})$. It follows that $\hyperSergeevProj{n}(\wh{C}_{\bar{\beta}}) = -\hyperSergeevProj{n}(\wh{C}_\beta)$. At the same time, since $z$ acts as multiplication by $-1$ on $\SimpleHyper{\lambda}$ then $\SerChar{\lambda}(z\distinguishedperm{\mu}{n}) = -\SerChar{\lambda}(\distinguishedperm{\mu}{n})$. It follows that 
\begin{equation*}
2^{\frac{- \length{\lambda}-\lengthparity(\lambda)}{2}-1}\frac{\stanshiftedstrict{\lambda}}{n!} \sum_{\beta \in \conjOdd} \SerChar{\lambda}(\beta)\hyperSergeevProj{n}(\wh{C}_\beta) = 2^{\frac{-\length{\lambda}-\lengthparity(\lambda)}{2}}\frac{\stanshiftedstrict{\lambda}}{n!} \sum_{\mu \in \oddpartitions[n]} \SerChar{\lambda}(\mu)C_\mu.
\end{equation*}
\end{proof}


\subsection{Interlacing coordinates for strict partitions}

In \cite{Ker00} Kerov developed a useful way to parametrize Young diagrams via their interlacing coordinates. Petrov showed that shifted strict Young diagrams can be similarly parametrized with only slight modification \cite{Pet09}. For $\lambda \in \strictpartitions{}$, and $\ydcell \in \shifted{\lambda}$ with coordinates $(i,j)$, the {\emph{content}} of $\ydcell$ is defined to be 
\begin{equation*}
\content{\ydcell} := i - j.
\end{equation*}
Note that when $\ydcell$ comes from a shifted diagram, $\content{\ydcell}$ is always nonnegative. Let
\begin{enumerate}
\item $\KerovCoorUp{\lambda}$ be the set of contents for cells that we can add to $\shifted{\lambda}$ to get another shifted strict partition. 
\item $\KerovCoorDown{\lambda}$ be the set of contents for cells that we can remove from $\shifted{\lambda}$ to get another shifted strict partition. 
\end{enumerate}
The set $(\KerovCoorUp{\lambda}, \KerovCoorDown{\lambda})$ uniquely characterizes $S(\lambda)$ and is called the {\emph{Kerov coordinates}} of $S(\lambda)$. We follow \cite{Pet09} and denote the shifted diagram obtained by adding a cell $\ydcell$ with content $x \in \KerovCoorUp{\lambda}$ to $S(\lambda)$ by $S(\lambda) + \ydcell(x)$ and the shifted diagram obtained by removing a cell $\ydcell$ from $S(\lambda)$ with content $y \in \KerovCoorDown{\lambda}$ by $S(\lambda) - \ydcell(y)$.                

\begin{example}                                   
In the case of $\lambda = (6,5,2,1) \in \strictpartitions[14]$, $\KerovCoorDown{\lambda} = \{{\color{red}{0}},{\color{red}{4}}\}$, $\KerovCoorUp{\lambda} = \{{\color{blue}{2}},{\color{blue}{6}}\}$, and

\vspace{4mm}

\begin{center}
\begin{tikzpicture}

\node at (6,1.25) {$S(\lambda) \quad = $};

\draw (9.5,1.5) rectangle (10,2);
\draw (9,1.5) rectangle (9.5,2);
\draw (8.5,1.5) rectangle (9,2);
\draw (8,1.5) rectangle (8.5,2);
\draw (7.5,1.5) rectangle (8,2);
\draw (7,1.5) rectangle (7.5,2);
\draw (9.5,1) rectangle (10,1.5);
\draw (9,1) rectangle (9.5,1.5);
\draw (8.5,1) rectangle (9,1.5);
\draw (8,1) rectangle (8.5,1.5);
\draw (7.5,1) rectangle (8,1.5);
\draw (8.5,.5) rectangle (9,1);
\draw (8,.5) rectangle (8.5,1);
\draw (8.5,0) rectangle (9,.5);


\node at (8.75,.25) {\color{red}{0}};
\node at (9.75,1.25) {\color{red}{4}};


\node at (9.25,.75) {\color{blue}{2}};
\node at (10.25,1.75) {\color{blue}{6}};

\node at (10.1,.2) {$.$};

\end{tikzpicture}
\end{center}

\vspace{4mm}

\end{example}


\omitt{
As in \cite{Pet09}, we define $\KerovCoorUpNoZero{\lambda} := \KerovCoorUp{\lambda} \setminus \{0\}$. It follows from \cite[Proposition 3.2]{Pet09} that for all $\lambda \in \strictpartitions$, $|\KerovCoorUpNoZero{\lambda}| = |\KerovCoorDown{\lambda}|$.
} 

We set
\begin{equation*}
\JMeig{i} := i(i+1).
\end{equation*}

\omitt{
The following proposition connects $\downtransition{\lambda, \cdot}$ and $\uptransition{\lambda}{\cdot}$ with the interlacing coordinates for $\lambda$.

\begin{proposition} \cite[Proposition 3.6, 3.7]{Pet09} \label{prop-interlacing-transition}
Let $\lambda \in \strictpartitions[n]$. Then we have
\begin{enumerate} 
\item \label{enum-prop-interlacing-1} \begin{equation*} 
\sum_{x \in \KerovCoorUp{\lambda}} \frac{\uptransition{\lambda}{\lambda + \ydcell(x)}}{z-\JMeig{x}} = \frac{ \prod_{y \in \KerovCoorDown{\lambda}} (z - \JMeig{y})}{ z \prod_{x \in \KerovCoorUpNoZero{\lambda}} (z - \JMeig{x})},
\end{equation*}
\item \label{enum-prop-interlacing-2} \begin{equation*}
1 - \sum_{y \in \KerovCoorDown{\lambda}} \frac{2|\lambda|\downtransition{\lambda}{\lambda - \ydcell(y)}}{z - \JMeig{y}} = \frac{\prod_{x \in \KerovCoorUpNoZero{\lambda}}(z - \JMeig{x})}{\prod_{y \in \KerovCoorDown{\lambda}}(z - \JMeig{y})}.
\end{equation*}
\end{enumerate}
\end{proposition}

The left sides of \eqref{enum-prop-interlacing-1} and \eqref{enum-prop-interlacing-2} in Proposition \ref{prop-interlacing-transition} can be rewritten as
\begin{equation} \label{eqn-motivation-moment-up}
\sum_{x \in \KerovCoorUp{\lambda}} \frac{\uptransition{\lambda}{\lambda + \ydcell(x)}}{z-\JMeig{x}} = \sum_{k = 0}^\infty \sum_{x \in \KerovCoorUp{\lambda}} \uptransition{\lambda}{\lambda + \ydcell(x)}\JMeig{x}^k z^{-k-1},
\end{equation}
and
\begin{equation} \label{eqn-motivation-moment-down}
1 - \sum_{y \in \KerovCoorDown{\lambda}} \frac{\downtransition{\lambda}{\lambda - \ydcell(y)}}{z - \JMeig{y}} = 1 - 2|\lambda|\sum_{k = 0}^\infty \sum_{y \in \KerovCoorDown{\lambda}} \downtransition{\lambda}{\lambda - \ydcell(y)} \JMeig{y}^k z^{-k-1}.
\end{equation}
Petrov defined functions on $\strictpartitions$ corresponding to the coefficients on the right side of \eqref{eqn-motivation-moment-up} and \eqref{eqn-motivation-moment-down} 
} 

Petrov defined two families of functions $\upmoment{k}: \strictpartitions \rightarrow \MB{Q}$ and $\downmoment{k}: \strictpartitions \rightarrow \MB{Q}$, such that
\begin{equation*}
\upmoment{k}(\lambda) := \sum_{x \in \KerovCoorUp{\lambda}} \uptransition{\lambda}{\lambda + \ydcell(x)}\JMeig{x}^k 
\end{equation*}
and
\begin{equation*}
\downmoment{k+1}(\lambda) := 2|\lambda|\sum_{y \in \KerovCoorDown{\lambda}} \downtransition{\lambda}{\lambda - \ydcell(y)}\JMeig{y}^k
\end{equation*}
and investigated their properties in \cite{Pet09}. Note that $\upmoment{0} = 1$.

\begin{remark}
$\upmoment{k}(\lambda)$ and $\downmoment{k}(\lambda)$ are the strict partition analog to moments of Kerov's transition and co-transition measure \cite{Ker00}. They will play a similar role to the one they played in \cite{KLM16}.
\end{remark}

\begin{proposition} \cite[Proposition 5.4]{Pet09} \label{prop-recursive-relation-petrov}
For $\lambda \in \strictpartitions[n]$,
\begin{equation*}
\upmoment{k} = \downmoment{k} +  \sum_{\substack{i,j > 0, \\i + j = k}} \upmoment{i}\downmoment{j}.
\end{equation*}
\end{proposition}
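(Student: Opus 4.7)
The plan is to package the two sequences $(\upmoment{k}(\lambda))_{k\geq 0}$ and $(\downmoment{k}(\lambda))_{k\geq 1}$ into formal Laurent series in $z^{-1}$, and then observe that an interlacing identity of Petrov forces these series to be reciprocals. Define
\begin{equation*}
U(z) := \sum_{k \geq 0} \upmoment{k}(\lambda)\, z^{-k}, \qquad D(z) := \sum_{k \geq 1} \downmoment{k}(\lambda)\, z^{-k}.
\end{equation*}
Expanding $\frac{1}{z-\JMeig{x}} = \sum_{k\geq 0} \JMeig{x}^k z^{-k-1}$ termwise in the definitions, and recalling that $\sum_{x \in \KerovCoorUp{\lambda}} \uptransition{\lambda}{\lambda+\ydcell(x)} = 1$, yields the closed forms
\begin{equation*}
U(z) = z \sum_{x \in \KerovCoorUp{\lambda}} \frac{\uptransition{\lambda}{\lambda+\ydcell(x)}}{z-\JMeig{x}}, \qquad D(z) = 2|\lambda| \sum_{y \in \KerovCoorDown{\lambda}} \frac{\downtransition{\lambda}{\lambda-\ydcell(y)}}{z-\JMeig{y}}.
\end{equation*}

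I would then invoke the two partial-fraction identities of Petrov \cite[Propositions 3.6, 3.7]{Pet09}, written in terms of the Kerov coordinates of $S(\lambda)$:
\begin{equation*}
\frac{U(z)}{z} = \frac{\prod_{y \in \KerovCoorDown{\lambda}}(z-\JMeig{y})}{z \prod_{x \in \KerovCoorUpNoZero{\lambda}}(z-\JMeig{x})}, \qquad 1 - D(z) = \frac{\prod_{x \in \KerovCoorUpNoZero{\lambda}}(z-\JMeig{x})}{\prod_{y \in \KerovCoorDown{\lambda}}(z-\JMeig{y})}.
\end{equation*}
Multiplying the two equalities produces the single clean identity $U(z)\,(1 - D(z)) = 1$, or equivalently $U(z) = 1 + U(z)\, D(z)$.

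It then remains to extract the coefficient of $z^{-k}$ for $k \geq 1$ on both sides. This gives
\begin{equation*}
\upmoment{k} = \sum_{\substack{i + j = k \\ i \geq 0,\ j \geq 1}} \upmoment{i}\, \downmoment{j} = \downmoment{k} + \sum_{\substack{i + j = k \\ i,\, j > 0}} \upmoment{i}\, \downmoment{j},
\end{equation*}
where in the second equality I have isolated the $i=0$ term and used $\upmoment{0} = 1$. This is exactly the claimed recursion. The substantive content of the argument is really carried by the two partial-fraction identities, which require a combinatorial analysis of the interlacing pattern of contents of addable and removable cells of $S(\lambda)$; once those are granted, the step from a reciprocal relationship of generating functions to the stated convolution identity is a routine power series manipulation.
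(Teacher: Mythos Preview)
Your argument is correct. Note that the paper does not give its own proof of this proposition: it is simply quoted from \cite[Proposition 5.4]{Pet09}, and indeed the partial-fraction identities you invoke from \cite[Propositions 3.6, 3.7]{Pet09} appear in the paper only inside an omitted block. Your reconstruction---packaging $(\upmoment{k})$ and $(\downmoment{k})$ into the series $U(z)$ and $D(z)$, using the interlacing identities to conclude $U(z)(1-D(z))=1$, and reading off the convolution---is precisely Petrov's approach, and the coefficient extraction is carried out cleanly.
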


We now give algebraic interpretations of $\upmoment{k}(\lambda)$ and $\downmoment{k}(\lambda)$ analogous to those found by Biane for Kerov's transition and co-transition measure on Young diagrams \cite{B98}. Let $\pr{n-1}: \Sern{n} \rightarrow \Sern{n-1}$ be the linear map defined such that for $x \in \Sern{n}$
\begin{equation*}
\pr{n-1}(x) := \begin{cases} 
x & \text{if $x \in \Sern{n-1}$}\\
0 & \text{otherwise}.
\end{cases}
\end{equation*}

\begin{proposition} \label{proposition-algebraic-interp-moments}
Let $\lambda \in \strictpartitions[n]$ for $n \geq 1$ and $k \geq 0$, then
\begin{enumerate}
\item 
$\norcharrep{\lambda}(\pr{n}(\JM{n+1}^{2k})) =  \upmoment{k}(\lambda).$
\item $\ds
\norcharrep{\lambda}\Big( \sum_{x \in \LcosSer{n}{n-1}} x\JM{n}^{r}x^{-1} \Big) = \begin{cases}
\downmoment{k+1}(\lambda) & \text{if $r = 2k$ is even}\\
0 & \text{otherwise}.
\end{cases}$
\end{enumerate}
\end{proposition}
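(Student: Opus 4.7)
The plan is to prove both parts by showing the left-hand side is central in $\Sern{n}$, computing its scalar action on $\SimpleSer{\lambda}$ using the spectral theory of the Sergeev Jucys--Murphy elements, and matching with the combinatorial right-hand side via Proposition~\ref{lemma-transition-prob-rep}.

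For part (1), I first verify that $\pr{n}: \Sern{n+1} \to \Sern{n}$ is an $(\Sern{n}, \Sern{n})$-bimodule map. This follows from the left-coset decomposition $\Sern{n+1} = \bigoplus_{s \in \LcosSer{n+1}{n}} s \cdot \Sern{n}$: left (or right) multiplication by $x \in \Sern{n}$ preserves the trivial-coset component since $x$ sits in the trivial coset. Because $\JM{n+1}$ commutes with $\Sern{n}$, it follows that $\pr{n}(\JM{n+1}^{2k}) \in \evencenter{n}$ and therefore acts on $\SimpleSer{\lambda}$ as the scalar $\norcharrep{\lambda}(\pr{n}(\JM{n+1}^{2k}))$. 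To compute this scalar, consider $V = \Ind^{\Sern{n+1}}_{\Sern{n}} \SimpleSer{\lambda}$ with its inclusion $\SimpleSer{\lambda} \hookrightarrow V$, $v \mapsto 1 \otimes v$: the trivial-coset component of $\JM{n+1}^{2k}(1\otimes v)$ is precisely $1 \otimes \pr{n}(\JM{n+1}^{2k}) v$. Decomposing $V = \bigoplus_{\mu: \lambda \nearrow \mu} m_\mu \SimpleSer{\mu}$ via Theorem~\ref{thm-Ser-branching} and invoking the spectral theorem for $\JM{n+1}^{2}$ (which acts as $\JMeig{\content{\ydcell_{\mu/\lambda}}}$ on the $\SimpleSer{\lambda}$-isotypic part of $\res \SimpleSer{\mu}$), this projection decomposes as a weighted sum of $\JMeig{\content{\ydcell_{\mu/\lambda}}}^k$. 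After accounting for the type-$\mathsf{M}$/$\mathsf{Q}$ multiplicity factors using Theorem~\ref{thm-dim-simple} and Proposition~\ref{lemma-transition-prob-rep}(\ref{enum-lemma-transition-rep}), the weights become exactly $\uptransition{\lambda}{\lambda + \ydcell(x)}$, giving $\upmoment{k}(\lambda)$.

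For part (2), since $\JM{n}$ commutes with $\Sern{n-1}$ the summand $x\JM{n}^r x^{-1}$ depends only on the coset of $x$, and conjugation by any $h \in \Sern{n}$ permutes these cosets, so the whole sum lies in $\evencenter{n}$. When $r$ is odd, I lift $\JM{n}$ to $\widehat{\JM{n}} := \sum_{j=1}^{n-1}(1 + a_j a_n)(j,n) \in \MB{C}[\hyperoctahedral{n}]$, expand $\widehat{\JM{n}}^r$ as a sum of monomials in transpositions and Clifford generators, and observe that every such monomial has an underlying $\typeBWeyl{n}$-cycle structure containing a cycle through position $n$ of length at least two. Consequently, no such monomial lies in a $\hyperoctahedral{n}$-conjugacy class indexed by $\conjOdd$, and Proposition~\ref{prop-conj-classes-go-to-zero}(\ref{prop-conj-classes-go-to-zero-2}) forces the projection of the conjugation sum in $\Sern{n}$ to vanish. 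When $r = 2k$ is even, the same spectral argument as in part~(1), applied to $\res^{\Sern{n}}_{\Sern{n-1}} \SimpleSer{\lambda} = \bigoplus_{\nu: \nu \nearrow \lambda} [\res \SimpleSer{\lambda}: \SimpleSer{\nu}] \SimpleSer{\nu}$, yields
\begin{equation*}
\norcharrep{\lambda}\Bigl(\sum_{x \in \LcosSer{n}{n-1}} x \JM{n}^{2k} x^{-1}\Bigr) = |\LcosSer{n}{n-1}| \sum_{\nu: \nu \nearrow \lambda} \frac{[\res \SimpleSer{\lambda}: \SimpleSer{\nu}] \dim(\SimpleSer{\nu})}{\dim(\SimpleSer{\lambda})} \JMeig{\content{\ydcell_{\lambda/\nu}}}^k.
\end{equation*}
Using $|\LcosSer{n}{n-1}| = 2n = 2|\lambda|$ together with Proposition~\ref{lemma-transition-prob-rep}(\ref{enum-lemma-cotransition-rep}) to identify the weights as $\downtransition{\lambda}{\nu}$, this becomes $\downmoment{k+1}(\lambda)$.

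The main obstacle is the odd-$r$ vanishing in part~(2): tracking the interaction of Clifford generators with the underlying cycle structure of each monomial in $\widehat{\JM{n}}^r$ is needed to verify that every such term falls outside $\conjOdd$, thereby invoking Proposition~\ref{prop-conj-classes-go-to-zero}(\ref{prop-conj-classes-go-to-zero-2}). The remaining spectral and dimensional identifications in both parts then reduce to bookkeeping with the branching rules of Theorem~\ref{thm-Ser-branching}.
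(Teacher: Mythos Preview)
Your approach differs from the paper's in two places. For part~(1), the paper computes via the character $\regrep_n$ of the left regular representation and the identity $2(n{+}1)\regrep_n(\pr{n}(y)) = \regrep_{n+1}(y)$, inserting the central idempotent $e_\lambda$ to isolate the $\lambda$-block; you instead work directly inside $\ind L^\lambda$. For the odd-$r$ case of part~(2), the paper uses only the elementary relation $\JM{n}c_n = -c_n\JM{n}$: the coset representatives come in pairs $s_i\cdots s_{n-1}$ and $s_i\cdots s_{n-1}c_n$, and conjugating $\JM{n}^{r}$ by the extra $c_n$ produces the sign $(-1)^{r+1}$ (after using $c_n^2=-1$), so for odd $r$ the two contributions cancel term by term. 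This is a two-line computation and avoids any conjugacy-class bookkeeping.

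Your odd-$r$ argument has a genuine gap. The claim that every monomial in $\widehat{\JM{n}}^{\,r}$ has a $\typeBWeyl{n}$-cycle through position $n$ of length at least two is false: for instance with $r=3$ and distinct $i,j<n$, the monomial $(j,n)(i,n)(j,n)$ (taking the identity from each Clifford bracket) equals the transposition $(i,j)$, which fixes $n$. What \emph{is} true is that the underlying permutation in every monomial is a product of an odd number of transpositions, hence an odd permutation; its cycle type therefore contains a part of even length and cannot lie in $\oddpartitions[n]$, so the monomial is never in a class from $\conjOdd$. With that correction your route through Proposition~\ref{prop-conj-classes-go-to-zero}(\ref{prop-conj-classes-go-to-zero-2}) can be made to work, once you also verify that the full $\hyperoctahedral{n}$-conjugation sum is a scalar multiple of your coset sum (this needs $\widehat{\JM{n}}$ to commute with $\hyperoctahedral{n-1}$, not just with individual monomials). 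The paper's anticommutation trick is considerably shorter.
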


\begin{proof}
\begin{enumerate}

\item Consider the character $\regrep_n: \Sern{n} \rightarrow \MB{C}$ corresponding to $\Sern{n}$ acting on itself by left multiplication. For $x \in \Sern{n}$,
\begin{equation} \label{eqn-reg-rep}
\regrep_n(x) := \begin{cases}
2^{n}n! & \text{if $x =1$}\\
0 & \text{otherwise}.
\end{cases}
\end{equation}
It follows from \eqref{eqn-reg-rep} that 
\begin{equation*}
2(n+1)\regrep_n(\pr{n}(x)) = \regrep_{n+1}(x).
\end{equation*}
Also note that if $y \in \Sern{n}$ and $x \in \Sern{n+1}$ then
\begin{equation*}
\pr{n}(yx) = y\pr{n}(x).
\end{equation*}
Recall that $e_\lambda$ is the central idempotent of $\Sern{n}$ corresponding to simple $\Sern{n}$-supermodule $\SimpleSer{\lambda}$. Then by Lemma \ref{prop-reg-rep-decomp} there are $2^{-\lengthparity(\lambda)}\dim(\SimpleSer{\lambda})$ copies of $\SimpleSer{\lambda}$ in the $\Sern{n}$-supermodule $\Sern{n}$ so that
\begin{align} \label{eqn-one-side-co-moment}
2(n+1)\regrep_n(\pr{n}(e_\lambda \JM{n+1}^{2k})) = 2(n+1)\regrep_n(e_\lambda \pr{n}(\JM{n+1}^{2k})) \\= 2^{1-\lengthparity(\lambda)}(n+1)\dim(\SimpleSer{\lambda})\charrep{\lambda}(\pr{n}(\JM{n+1}^{2k})).
\end{align}
On the other hand, the weight space decomposition for the Jucys-Murphy operators on $\Sern{n}$-supermodules implies that
\begin{equation*}
2(n+1)\regrep_n(\pr{n}(e_\lambda \JM{n+1}^{2k})) = \regrep_{n+1}(e_\lambda \JM{n}^{2k})
\end{equation*}
\begin{equation*}
 = \sum_{x \in \KerovCoorUp{\lambda}} \Big[\SimpleSer{\lambda}:\res^{\Sern{n+1}}_{\Sern{n}}\SimpleSer{\lambda+\smallydcell(x)}\Big]\frac{\dim(\SimpleSer{\lambda})\dim(\SimpleSer{\lambda+\smallydcell(x)})}{2^{\lengthparity(\lambda+\smallydcell(x))}} s(x)^k.
\end{equation*}
Thus, taking the normalized character gives
\begin{align*}
&\norcharrep{\lambda}(\pr{n-1}(\JM{n}^{2k})) \\ &= \sum_{x \in \KerovCoorUp{\lambda}} 2^{\lengthparity(\lambda+\smallydcell(x)) - \lengthparity(\lambda)}\Big[\SimpleSer{\lambda}:\res^{\Sern{n+1}}_{\Sern{n}}\SimpleSer{\lambda+\smallydcell(x)}\Big]\frac{\dim(\SimpleSer{\lambda + \smallydcell(x)}) }{2(n+1)\dim(\SimpleSer{\lambda})}s(x)^{k}.
\end{align*}
As $\dim(\Ind^{\Sern{n+1}}_{\Sern{n}} \SimpleSer{\lambda}) = 2(n+1)\dim(\SimpleSer{\lambda})$ and by Frobenious reciprocity
\begin{equation*}
\Big[\SimpleSer{\lambda}:\res^{\Sern{n+1}}_{\Sern{n}}\SimpleSer{\lambda+\smallydcell(x)}\Big] = \Big[\ind^{\Sern{n+1}}_{\Sern{n}}\SimpleSer{\lambda}:\SimpleSer{\lambda+\smallydcell(x)}\Big],
\end{equation*}
applying Lemma \ref{lemma-transition-prob-rep}.\ref{enum-lemma-transition-rep} gives the desired result.

\item The elements $c_i$ and the Jucys-Murphy elements $\JM{i}$ satisfy $\JM{i}c_i = -c_i\JM{i}$ and for $x = s_i \dots s_{n-1}c_n^{\epsilon}$ we have $x^{-1} = (-1)^{\epsilon}c_n s_{n-1}\dots s_i$. Therefore
\begin{align*}
\sum_{x \in \LcosSer{n}{n-1}} x\JM{n}^{r}x^{-1} 
&= \sum_{i = 1}^{n} s_i \dots s_{n-1}\JM{n}^{r}s_{n-1} \dots s_i \;\;- \;\; s_i \dots s_{n-1}c_{n}\JM{n}^{r}c_{n}s_{n-1}\dots s_i 
\\ \numberthis \label{eqn-JM-simplification}
&= \sum_{i = 1}^{n} s_i \dots s_{n-1}\JM{n}^{r}s_{n-1} \dots s_i \;\;- \;\; (-1)^{r+1}s_i \dots s_{n-1}\JM{n}^{r}s_{n-1}\dots s_i.
\end{align*}
When $r$ is odd, this is then equal to zero. When $r = 2k$, \eqref{eqn-JM-simplification} is equal to
\begin{equation*}
2\sum_{i = 1}^{n} s_i \dots s_{n-1}\JM{n}^{2k}s_{n-1} \dots s_i.
\end{equation*}
Since characters are invariant under conjugation, we have
\begin{equation*}
\norcharrep{\lambda}\Big(2\sum_{i = 1}^{n} s_i \dots s_{n-1}\JM{n}^{2k}s_{n-1} \dots s_i\Big) = 2n\norcharrep{\lambda}(\JM{n}^{2k}).
\end{equation*}
Decomposing $\JM{n}$ into its weight spaces then gives
\begin{align*}
2n\norcharrep{\lambda}(\JM{n}^{2k}) 
&= 2n\sum_{y \in \KerovCoorDown{\lambda}} \frac{\Big[\SimpleSer{\lambda - \smallydcell(y)}:\res^{\Sern{n}}_{\Sern{n-1}} \SimpleSer{\lambda}\Big]\dim(\SimpleSer{\lambda-\smallydcell(y)})\JMeig{y}^k}{\dim(\SimpleSer{\lambda})}\\
&= 2n\sum_{y \in \KerovCoorDown{\lambda}}\downtransition{\lambda}{\lambda - \smallydcell(y)}\JMeig{y}^k
\end{align*}
where the last equality uses Lemma \ref{lemma-transition-prob-rep}.\ref{enum-lemma-cotransition-rep}
\end{enumerate}
\end{proof}


\section{The subalgebra $\supersym$} \label{sect-gamma}

We recall relevant facts about the algebra $\supersym$ following~\cite{Mac15}. Let $p_k$ be the $k$th power sum symmetric function, 
and recall that, for $\rho \in \partitions$,
\begin{equation*}
p_\rho := \prod_{k=1}^{\ell(\rho)} p_{\rho_k}.
\end{equation*}

 $\Gamma$ can be described as the subalgebra of the symmetric functions generated by the odd power sums
\begin{equation*}
\Gamma = \MB{C}[p_1,p_3,p_5,\dots].
\end{equation*}

Elements of $\Gamma$ can be evaluated on partitions in the following way. Let $f \in \supersym$ and $\rho \in \partitions$, and define 
\begin{equation} \label{eqn-eval-map}
f(\rho) := f(\rho_1, \rho_2, \ldots, \rho_{\ell(\rho)}, 0, \ldots).
\end{equation}
Let $\funonYD$ denote the algebra of functions from $\strictpartitions$ to $\MB{C}$ with pointwise multiplication. 

\begin{proposition} \cite[Proposition 6.2]{IK99}) \label{prop-supersym-embed}
The algebra $\supersym$ embeds into $\funonYD$ via the evaluation map \eqref{eqn-eval-map}.
\end{proposition}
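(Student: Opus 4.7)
The plan is to verify injectivity of the evaluation map; the algebra homomorphism property is immediate since operations in $\funonYD$ are pointwise and the evaluation at each fixed $\lambda$ is a ring homomorphism $\Lambda \to \MB{C}$.

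Suppose $f \in \supersym$ satisfies $f(\lambda) = 0$ for every $\lambda \in \strictpartitions$. Since $\supersym$ is a graded subalgebra of the ring $\Lambda$ of symmetric functions, $f$ has a well-defined degree $d$. Fix any $r \geq d$ and let $f_r \in \MB{C}[x_1, \ldots, x_r]^{S_r}$ denote the image of $f$ under the standard projection $\Lambda \twoheadrightarrow \MB{C}[x_1, \ldots, x_r]^{S_r}$ obtained by setting $x_{r+1} = x_{r+2} = \cdots = 0$. The inverse-limit construction of $\Lambda$ ensures that this projection is injective on elements of degree at most $r$, so to conclude $f = 0$ it suffices to show $f_r = 0$.

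By hypothesis, for every integer tuple with $\lambda_1 > \lambda_2 > \cdots > \lambda_r \geq 1$, the corresponding strict partition $\lambda = (\lambda_1, \ldots, \lambda_r)$ gives $f_r(\lambda_1, \ldots, \lambda_r) = f(\lambda) = 0$. Since $f_r$ is symmetric under $S_r$, this extends to vanishing on every $r$-tuple of pairwise distinct positive integers. This set is Zariski dense in $\MB{C}^r$: indeed $\MB{Z}_{>0}^r$ is Zariski dense in $\MB{C}^r$, and removing the finitely many hyperplanes $\{x_i = x_j\}$ preserves Zariski density. Therefore $f_r = 0$ as a polynomial, and hence $f = 0$.

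I do not anticipate any real difficulty here. The argument is essentially the observation that strict-partition evaluations exhaust a Zariski-dense subset of $\MB{C}^r$, combined with the standard stability property of symmetric functions. The only subtlety is ensuring that the degree of $f$ is bounded so we may choose $r$ large enough to make the specialization $\Lambda \to \MB{C}[x_1, \ldots, x_r]^{S_r}$ faithful on $f$, but this is automatic since $\supersym$ is a graded subalgebra of $\Lambda$.
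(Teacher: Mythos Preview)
The paper does not supply its own proof of this proposition; it simply cites \cite[Proposition 6.2]{IK99} and moves on. So there is nothing to compare against, and your argument stands on its own.

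Your proof is correct. In fact it establishes a bit more than the proposition asserts: the Zariski--density step never uses that $f$ lies in the subalgebra $\supersym$, so the same argument shows that the evaluation map is already injective on all of $\Lambda$. One minor point of presentation: the sentence ``removing the finitely many hyperplanes $\{x_i=x_j\}$ preserves Zariski density'' is true but merits a word of justification---for instance, the Zariski closure of $\MB{Z}_{>0}^r$ is all of $\MB{C}^r$, which is irreducible, and an irreducible variety cannot be written as the union of a proper closed subset with finitely many hyperplanes; hence the closure of the distinct-tuple locus must also be all of $\MB{C}^r$. Also, ``$f$ has a well-defined degree $d$'' would read more precisely as ``$f$ has finite degree $d$,'' since $f$ need not be homogeneous; but your intent is clear and the argument is unaffected.
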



 An important linear basis of $\supersym$ is the Schur $Q$-functions $\{Q_\lambda\}$, indexed by $\strictpartitions$ (cf. \cite[Section III.8]{Mac15}). 
 
 Define numbers $X_\mu^\lambda$ for $\lambda \in \strictpartitions[n]$, $\mu \in \oddpartitions[n]$, via 
 \begin{equation} \label{eqn-relation-p-P}
 p_\mu = \sum_{\lambda \in \strictpartitions[n]} 2^{-\length{\lambda}}X_\mu^\lambda Q_\lambda.
 \end{equation}
 
 There is a ``factorial'' version of the Schur $Q$-functions, defined in \cite{Iv04}. For $\lambda \in \strictpartitions$, the {\emph{factorial Schur $Q$-polynomial}} corresponding to $\lambda$ is defined as:
 \begin{equation} 
 Q^*_{\lambda|N}(x_1,\ldots, x_N) := \frac{2^{\length{\lambda}}}{(N-l)!} \sum_{\omega \in S_N} \omega\left(x_1^{\downarrow \lambda_1} x_2^{\downarrow \lambda_2} \ldots x_l^{\downarrow \lambda_l} \prod_{\substack{1\leq i \leq l \\ i < j \leq N}} \frac{x_i+x_j}{x_i-x_j}\right).
 \end{equation}
 If $\ell(\lambda)>N$, then $Q^*_{\lambda|N}$ is defined to be 0. The collection $(Q_{\lambda|N}^*)_{N=1,2,\dots}$ defines an element of $\Gamma$, the {\emph{factorial Schur $Q$-function}} $Q_\lambda^*$. Factorial Schur $Q$-functions have the following useful properties.
 
 \begin{proposition}\cite{Iv01} Let $\lambda, \nu \in \strictpartitions$.  
 \begin{enumerate} 
 \item There exists $g \in \Gamma$ of degree less than $|\lambda|$ such that 
 \begin{equation*}
 Q_\lambda^* = Q_\lambda+g. 
 \end{equation*}
 \item The collection $\{Q_\lambda^*\}_{\lambda \in \strictpartitions}$ is a linear basis of $\Gamma$. 
 \item If $\nu \in \strictpartitions[k]$, $\lambda \in \strictpartitions[n]$ for $k \leq n$ and $\nu \not\subseteq \lambda$, $Q_\lambda^*(\nu) = 0.$  
 \end{enumerate}
 \end{proposition}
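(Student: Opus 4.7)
The plan is to handle the three parts in sequence, exploiting the close parallel between the definition of $Q_\lambda^*$ and the classical Weyl-type formula for $Q_\lambda$. Parts (1) and (2) follow by a short unitriangularity argument, while (3) is the main content and requires a pigeonhole argument on the defining sum.

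For (1), Macdonald's Weyl-type formula (\cite[Section III.8]{Mac15})
\begin{equation*}
Q_\lambda(x_1, \ldots, x_N) = \frac{2^{\length{\lambda}}}{(N-\length{\lambda})!} \sum_{\omega \in S_N} \omega\left(x_1^{\lambda_1} \cdots x_{\length{\lambda}}^{\lambda_{\length{\lambda}}} \prod_{\substack{1 \leq i \leq \length{\lambda} \\ i < j \leq N}} \frac{x_i + x_j}{x_i - x_j}\right)
\end{equation*}
differs from the definition of $Q^*_{\lambda|N}$ only in having $x_i^{\lambda_i}$ in place of $x_i^{\downarrow \lambda_i}$. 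Since $x^{\downarrow k} - x^k$ is a polynomial of degree strictly less than $k$, the two integrands differ by a polynomial of total degree less than $|\lambda|$, and antisymmetrizing and stabilizing in $N$ yields $g := Q_\lambda^* - Q_\lambda \in \Gamma$ with $\deg g < |\lambda|$. Iterating this gives the expansion $Q_\lambda^* = Q_\lambda + \sum_{|\mu| < |\lambda|} c_{\lambda,\mu} Q_\mu$; since $\{Q_\lambda\}$ is a basis of $\Gamma$ and the transition matrix is unitriangular in any size-refining order, $\{Q_\lambda^*\}_{\lambda \in \strictpartitions}$ is also a basis, giving (2).

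For (3), I reduce to the stronger Ivanov vanishing $Q_\lambda^*(\nu) = 0$ whenever $\lambda \not\subseteq \nu$: under the given hypotheses $|\nu| \leq |\lambda|$ and $\nu \not\subseteq \lambda$, the containment $\lambda \subseteq \nu$ would force $|\lambda| = |\nu|$ and hence $\lambda = \nu$, contradicting $\nu \not\subseteq \lambda$, so $\lambda \not\subseteq \nu$ and the stronger claim applies. To prove the stronger vanishing, evaluate the Weyl-type formula for $Q^*_{\lambda|N}$ at $(\nu_1, \ldots, \nu_{\length{\nu}}, 0, \ldots, 0)$. Each $\omega$-summand involves the product $\prod_{j=1}^{\length{\lambda}} \nu_{\omega^{-1}(j)}^{\downarrow \lambda_j}$ (with $\nu_m := 0$ for $m > \length{\nu}$), and the factor $\nu_m^{\downarrow \lambda_j}$ vanishes exactly when $\nu_m < \lambda_j$. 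Choose $i$ minimal with $\lambda_i > \nu_i$; for each $j \leq i$ we have $\lambda_j \geq \lambda_i > \nu_i$, so by strict decrease of $\nu$ (and $\nu_m = 0$ for $m > \length{\nu}$), nonvanishing of the $j$-th factor forces $\omega^{-1}(j) \leq i-1$. Placing $i$ distinct indices $\omega^{-1}(1), \ldots, \omega^{-1}(i)$ into $\{1, \ldots, i-1\}$ is impossible by pigeonhole, so every summand vanishes and $Q_\lambda^*(\nu) = 0$.

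The main obstacle is handling the apparent poles $x_i - x_j$ in the Weyl-type formula when evaluation coordinates coincide (notably at the zero padding). Since $Q^*_{\lambda|N}$ is in fact a polynomial---its antisymmetric numerator, after clearing the denominator, is divisible by the Vandermonde---this is dealt with either by clearing denominators first and executing the pigeonhole argument at the level of the full antisymmetrized polynomial, or by switching to the tableau/Pfaffian formulation of $Q_\lambda^*$, which is manifestly polynomial and from which the vanishing can be read off directly.
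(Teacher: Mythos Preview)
Your argument is correct and follows the standard route (essentially Ivanov's own in \cite{Iv01}); the paper does not supply an independent proof but simply cites that reference, so there is nothing to compare against beyond noting that your unitriangularity argument for (1)--(2) and pigeonhole argument for (3) are exactly the expected ones.

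One small sharpening for part (3): rather than appealing vaguely to ``clearing denominators'' or the Pfaffian formulation, the cleanest way to dispose of the apparent poles is to use the stability of the family $(Q^*_{\lambda|N})_N$ and evaluate with $N=\ell(\nu)$. Then the evaluation point is $(\nu_1,\dots,\nu_{\ell(\nu)})$ with all coordinates distinct and positive, so the Weyl-type formula applies without singularities and your pigeonhole argument goes through verbatim when $\ell(\lambda)\le\ell(\nu)$; while if $\ell(\lambda)>\ell(\nu)$ then $Q^*_{\lambda|\ell(\nu)}=0$ by definition and stability gives $Q_\lambda^*(\nu)=0$ immediately. This replaces the last paragraph of your proposal with a one-line reduction.
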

 
Let $\psi: \supersym \rightarrow \supersym$ be the linear map that sends $Q_\lambda \mapsto Q_\lambda^*$. For any $\mu \in \oddpartitions$, define the inhomogeneous analogue of the power sum $\shiftedpowersum_\mu := \psi(p_\mu) \in \Gamma$. Applying $\psi$ to both sides of \eqref{eqn-relation-p-P} gives 
\begin{equation*} 
\shiftedpowersum_\mu = \sum_{\lambda \in \strictpartitions[k]} 2^{-\length{\lambda}}X_\mu^\lambda Q_\lambda^*.
\end{equation*}
It also follows from the fact that $X_\mu^\lambda = 2^{-\length{\mu} + \frac{\length{\lambda} -\lengthparity(\lambda)}{2}}\SerChar{\lambda}(\mu)$ \cite[Proposition 3.3]{Iv01} and 
\begin{equation*}
Q_\lambda = \sum_{\mu \in \oddpartitions[n]} \frac{2^{\length{\mu}}}{z_\mu} X^{\lambda}_\mu p_\mu
\end{equation*}
that
\begin{equation} \label{eqn-P-in-terms-of-frakp}
Q_\lambda^* = 2^{\frac{\length{\lambda}-\lengthparity(\lambda)}{2}} \sum_{\mu \in \oddpartitions[n]} \frac{\SerChar{\lambda}(\mu)}{z_\mu} \shiftedpowersum_\mu.
\end{equation}
The elements $\{\shiftedpowersum_\mu\}_{\mu \in \strictpartitions}$ were first studied in \cite{Iv01}, where Ivanov proves that they satisfy the following properties.
 
 \begin{proposition}\cite{Iv01} \label{prop-properties-of-pfrak} 
 Let $\mu \in \oddpartitions[k]$ and $\lambda \in \strictpartitions[n]$. \begin{enumerate}
 \item There exists $g \in \Gamma$ of degree less than $|\mu|$ such that 
    \begin{equation*}
    \shiftedpowersum_\mu = p_\mu + g.
    \end{equation*}
     \item The family $(\shiftedpowersum_\mu)_{\mu \in \oddpartitions}$ is a linear basis of $\Gamma.$
     \item \label{item-evaluation-property}
     \begin{equation*}
     \shiftedpowersum_\mu(\lambda) = \left\{\begin{array}{lr} n^{\downarrow k} \cdot \displaystyle\frac{X_{\mu \cup (1^{n-k})}^\lambda}{g_\lambda} \qquad & \text{if } |\lambda|\geq |\mu|, \\ 0 \qquad & \text{otherwise} \end{array}\right.
     \end{equation*}
     where in particular $g_\lambda = X_{1^{|\lambda|}}^\lambda$. 
     \item Let $\gamma \in \oddpartitions$. Define $\mu \cup \gamma$ to be the partition formed by taking the disjoint union of parts of $\mu$ and $\gamma$ and rearranging them in decreasing order. Then there exists $g \in \supersym$ of degree less than $|\mu \cup \gamma|$ such that $$\shiftedpowersum_\mu \cdot \shiftedpowersum_\gamma = \shiftedpowersum_{\mu \cup \gamma} + g.$$ 
 \end{enumerate}
 \end{proposition}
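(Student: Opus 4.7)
Parts (1), (4), and (2) reduce to short manipulations. For (1), apply $\psi$ term-by-term to \eqref{eqn-relation-p-P} to write
\begin{equation*}
\shiftedpowersum_\mu = \psi(p_\mu) = \sum_{\eta \in \strictpartitions[|\mu|]} 2^{-\ell(\eta)} X^\eta_\mu\, Q^*_\eta;
\end{equation*}
since $Q^*_\eta = Q_\eta + (\text{degree} < |\eta|)$, the difference $\shiftedpowersum_\mu - p_\mu$ has degree strictly less than $|\mu|$. Part (4) is then immediate: expanding both $\shiftedpowersum_\mu\,\shiftedpowersum_\gamma$ and $\shiftedpowersum_{\mu \cup \gamma}$ via (1), the top-degree parts both coincide with $p_\mu p_\gamma = p_{\mu \cup \gamma}$, so the difference lies in degree $<|\mu|+|\gamma|$. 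Part (2) also follows from (1): since $\{p_\mu\}_{\mu \in \oddpartitions}$ is a basis of $\Gamma = \mathbb{C}[p_1,p_3,\ldots]$ and the transition matrix to $\{\shiftedpowersum_\mu\}_{\mu \in \oddpartitions}$ is unitriangular with respect to the degree filtration (refined by any total order on partitions of equal size), the latter is also a basis.

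The substantive content is Part (3). The strategy is to first invert the expansion \eqref{eqn-P-in-terms-of-frakp}, using spin character orthogonality for $\Sern{k}$ (dictated by the super-Wedderburn decomposition implicit in \eqref{eqn-Sergeev-hyperocto-connection}), so as to express $\shiftedpowersum_\mu$ for $\mu \in \oddpartitions[k]$ as an explicit finite linear combination
\begin{equation*}
\shiftedpowersum_\mu = \sum_{\eta \in \strictpartitions[k]} c^\eta_\mu\, Q^*_\eta, \qquad c^\eta_\mu \text{ proportional to } \chi^\eta(\mu)/g_\eta.
\end{equation*}
For $\lambda$ with $|\lambda|<k$ this, combined with the vanishing $Q^*_\eta(\lambda) = 0$ for $\eta \not\subseteq \lambda$ (in particular when $|\eta|>|\lambda|$), immediately forces $\shiftedpowersum_\mu(\lambda)=0$. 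For $\lambda$ with $|\lambda|=n\geq k$, since each $Q^*_\eta$ lies in $\Gamma \subset \funonYD$ (Proposition~\ref{prop-supersym-embed}), term-by-term evaluation is legitimate and collecting contributions yields
\begin{equation*}
\shiftedpowersum_\mu(\lambda) = 2^{k-\ell(\mu)}\, n^{\downarrow k}\, \tilde{\chi}^\lambda(\mu \cup 1^{n-k}).
\end{equation*}
Rewriting via $X^\lambda_\rho = 2^{-\ell(\rho)+(\ell(\lambda)-\delta(\lambda))/2}\chi^\lambda(\rho)$ (the identity stated just before \eqref{eqn-P-in-terms-of-frakp}) together with $\dim \SimpleSer{\lambda} = 2^{n-(\ell(\lambda)-\delta(\lambda))/2} g_\lambda$ from Theorem~\ref{thm-dim-simple} converts this into the claimed $n^{\downarrow k}\, X^\lambda_{\mu \cup (1^{n-k})}/g_\lambda$; specializing to $\mu = \emptyset$, $k=0$ then also verifies the side statement $g_\lambda = X^\lambda_{1^{|\lambda|}}$. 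Observe that the final identity repackages as $\shiftedpowersum_\mu(\lambda) = 2^{-\ell(\mu)}\tilde{\chi}^\lambda(\Sergeevclasssum{\mu}{n})$, dovetailing with Proposition~\ref{prop-character-classsum}.

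The main obstacle is the bookkeeping in Part (3): the factor $n^{\downarrow k}$ and all powers of $2$ arise from a delicate interplay between $\delta(\lambda)$, $\ell(\mu)$, $(\ell(\lambda)-\delta(\lambda))/2$, the coset count $|\LcosHyper{n}{n-k}| = n^{\downarrow k} 2^{n-k}$ from \eqref{eqn-size-left-cosets}, the super-Wedderburn factor coming from type-$\mathsf{Q}$ modules, and the dimension formula in Theorem~\ref{thm-dim-simple}. None of these cancel automatically, so landing on the precise normalization stated in (3) requires correctly fixing the spin character orthogonality relation for $\Sern{k}$, which is the one structural input not already spelled out in the excerpt.
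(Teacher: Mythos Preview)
The paper does not supply its own proof of this proposition; it is quoted verbatim from Ivanov \cite{Iv01}. So there is no in-paper argument to compare against, and your proposal must stand on its own.

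Your treatment of (1), (4), and (2) is correct and is the standard argument: (1) follows immediately from applying $\psi$ to \eqref{eqn-relation-p-P} and using $Q^*_\eta = Q_\eta + (\text{lower degree})$; (4) and (2) are then formal consequences.

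For (3), however, there is a genuine gap. You correctly write $\shiftedpowersum_\mu = \sum_{\eta \in \strictpartitions[k]} 2^{-\ell(\eta)} X^\eta_\mu\, Q^*_\eta$ (no inversion is needed---this is already the definition of $\psi$ applied to \eqref{eqn-relation-p-P}), and the vanishing for $|\lambda|<k$ does follow from $Q^*_\eta(\lambda)=0$ when $|\eta|>|\lambda|$. But for $|\lambda|=n\ge k$ your phrase ``collecting contributions yields \ldots'' hides the entire content of the result: to sum $\sum_\eta 2^{-\ell(\eta)} X^\eta_\mu\, Q^*_\eta(\lambda)$ you must know the values $Q^*_\eta(\lambda)$ for arbitrary $\eta\in\strictpartitions[k]$ and $\lambda\in\strictpartitions[n]$. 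That evaluation is precisely Ivanov's characterization theorem for factorial Schur $Q$-functions (expressing $Q^*_\eta(\lambda)$ in terms of skew shifted tableaux, equivalently projective branching multiplicities), and it is \emph{not} a consequence of spin character orthogonality for $\Sern{k}$. Orthogonality only lets you pass between the bases $\{p_\mu\}$ and $\{Q_\lambda\}$ of $\Gamma$ at a fixed degree; it says nothing about the value of a degree-$k$ element of $\Gamma$ at a point $\lambda$ with $|\lambda|>k$. So the ``one structural input'' you identify is misdiagnosed: what is actually missing is the evaluation formula for $Q^*_\eta(\lambda)$, and that is the heart of Ivanov's proof rather than a normalization to be checked.
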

 
 As a Corollary to part \ref{item-evaluation-property} of the above Proposition, we have another formula for the value of $\shiftedpowersum_\rho$. 
 
 \begin{corollary} \cite{Iv01} \label{cor-value-of-shiftedpower}
 Let $\mu \in \oddpartitions[k]$ and $\lambda \in \strictpartitions[n]$. We have 
 \begin{equation*}
 \shiftedpowersum_\mu(\lambda) = 2^{k-\ell(\mu)} n^{\downarrow k}\frac{\chi^\lambda (\mu \cup 1^{n-k})}{\chi^\lambda(1^n)}.
 \end{equation*}
 \end{corollary}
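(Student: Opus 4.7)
The plan is to deduce this from Proposition \ref{prop-properties-of-pfrak}.(\ref{item-evaluation-property}) by substituting the closed formula for $X_\mu^\lambda$ in terms of Sergeev characters that was quoted earlier in the section, namely
\begin{equation*}
X_\mu^\lambda = 2^{-\length{\mu} + \frac{\length{\lambda} - \lengthparity(\lambda)}{2}}\SerChar{\lambda}(\mu),
\end{equation*}
which was attributed to \cite[Proposition 3.3]{Iv01}. Since the statement of Proposition \ref{prop-properties-of-pfrak}.(\ref{item-evaluation-property}) involves a ratio $X^\lambda_{\mu \cup 1^{n-k}}/g_\lambda$ with $g_\lambda = X^\lambda_{1^{|\lambda|}}$, and both factors of the ratio involve the same combinatorial data associated to $\lambda$, most of the factors of $2$ and all the dependence on $\lengthparity(\lambda)$ will cancel.

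First I would apply the displayed formula to the numerator, writing $\length{\mu \cup 1^{n-k}} = \length{\mu} + (n-k)$ to obtain
\begin{equation*}
X^\lambda_{\mu \cup 1^{n-k}} = 2^{-\length{\mu} - (n-k) + \frac{\length{\lambda} - \lengthparity(\lambda)}{2}} \SerChar{\lambda}(\mu \cup 1^{n-k}).
\end{equation*}
Next I would apply the same formula to the denominator, using $\length{1^n} = n$, to get
\begin{equation*}
g_\lambda = X^\lambda_{1^n} = 2^{-n + \frac{\length{\lambda} - \lengthparity(\lambda)}{2}} \SerChar{\lambda}(1^n).
\end{equation*}
Dividing, the $\lengthparity(\lambda)$ and $\length{\lambda}$ contributions cancel, and the power of $2$ simplifies to $2^{-\length{\mu} - (n-k) + n} = 2^{k - \length{\mu}}$, yielding
\begin{equation*}
\frac{X^\lambda_{\mu \cup 1^{n-k}}}{g_\lambda} = 2^{k - \length{\mu}} \frac{\SerChar{\lambda}(\mu \cup 1^{n-k})}{\SerChar{\lambda}(1^n)}.
\end{equation*}

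Multiplying by the $n^{\downarrow k}$ supplied by Proposition \ref{prop-properties-of-pfrak}.(\ref{item-evaluation-property}) gives the claimed identity in the case $|\lambda| \geq |\mu|$. The vanishing case $|\lambda| < |\mu|$ is already stated in Proposition \ref{prop-properties-of-pfrak}.(\ref{item-evaluation-property}), so no separate argument is needed (and it is consistent with the convention that the character of a partition with too many parts vanishes). The only potential obstacle is bookkeeping the factors of $2$ and the cancellation of the $\lengthparity(\lambda)$ term, but both are routine once the right expression for $X^\lambda_\mu$ is inserted; no representation-theoretic input beyond what has already been recalled is required.
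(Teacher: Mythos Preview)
Your proof is correct and is exactly the argument the paper has in mind: the corollary is stated immediately after Proposition~\ref{prop-properties-of-pfrak} with no proof beyond the citation to \cite{Iv01}, and the intended deduction is precisely to substitute the formula $X_\mu^\lambda = 2^{-\length{\mu} + \frac{\length{\lambda} - \lengthparity(\lambda)}{2}}\SerChar{\lambda}(\mu)$ (quoted just below in the paper) into part~(\ref{item-evaluation-property}) and cancel. One small remark: in the case $|\lambda| < |\mu|$ the right-hand side of the corollary vanishes automatically because $n^{\downarrow k} = 0$ for $n < k$, so you need not invoke any convention about characters.
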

 
 \begin{corollary} \label{coro-pfrak-is-gen/basis} 
The elements $\{\shiftedpowersum_{2k+1}\}_{k \geq 0}$ are algebraically independent and generate $\supersym$.
\end{corollary}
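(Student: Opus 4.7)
The plan is to deduce both claims of the corollary directly from Proposition \ref{prop-properties-of-pfrak}, exploiting the filtration on $\Gamma$ by degree.

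First I would show that $\{\shiftedpowersum_{2k+1}\}_{k \geq 0}$ generates $\Gamma$ as an algebra. By Proposition \ref{prop-properties-of-pfrak}(2) the elements $\{\shiftedpowersum_\mu\}_{\mu \in \oddpartitions}$ form a linear basis of $\Gamma$, so it suffices to show that every $\shiftedpowersum_\mu$ lies in the subalgebra $A \subseteq \Gamma$ generated by $\{\shiftedpowersum_{2k+1}\}_{k \geq 0}$. I would argue by induction on $|\mu|$. Writing $\mu = (\mu_1, \dots, \mu_r) \in \oddpartitions$, iterating Proposition \ref{prop-properties-of-pfrak}(4) yields
\begin{equation*}
\shiftedpowersum_{\mu_1} \cdot \shiftedpowersum_{\mu_2} \cdots \shiftedpowersum_{\mu_r} = \shiftedpowersum_\mu + g,
\end{equation*}
where $g \in \Gamma$ has degree strictly less than $|\mu|$. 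Since $g$ is a linear combination of basis elements $\shiftedpowersum_\gamma$ with $|\gamma|<|\mu|$, the inductive hypothesis places $g \in A$. The left-hand side is manifestly in $A$, so $\shiftedpowersum_\mu \in A$ as well.

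Next I would show algebraic independence of $\{\shiftedpowersum_{2k+1}\}_{k \geq 0}$. Since $\Gamma = \MB{C}[p_1, p_3, p_5, \dots]$ is a polynomial algebra on the odd power sums, these are algebraically independent. Proposition \ref{prop-properties-of-pfrak}(1) tells us that $\shiftedpowersum_{2k+1} = p_{2k+1} + g_k$ with $\deg(g_k) < 2k+1$, so the transition from $\{p_{2k+1}\}$ to $\{\shiftedpowersum_{2k+1}\}$ is triangular with respect to the standard grading. Suppose, for contradiction, that $P$ is a nonzero polynomial in finitely many variables with $P(\shiftedpowersum_1, \shiftedpowersum_3, \dots) = 0$. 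Taking the top-degree homogeneous component $P_{\mathrm{top}}$ of $P$ (with respect to the grading in which the variable corresponding to $\shiftedpowersum_{2k+1}$ has degree $2k+1$), the relation collapses to $P_{\mathrm{top}}(p_1, p_3, \dots) = 0$, contradicting the algebraic independence of the $p_{2k+1}$.

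Both parts are short and essentially formal once Proposition \ref{prop-properties-of-pfrak} is in hand; there is no real obstacle, as the heavy lifting (the leading-term and multiplicative structure of $\shiftedpowersum_\mu$) has already been carried out by Ivanov and quoted in the statement of that proposition.
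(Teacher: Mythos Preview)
Your proof is correct and is precisely the standard deduction from Proposition \ref{prop-properties-of-pfrak} that the paper has in mind; the paper states the corollary without proof, treating it as immediate from the leading-term and multiplicative properties of the $\shiftedpowersum_\mu$ just recorded there. Your write-up simply makes explicit the induction on $|\mu|$ for generation and the triangularity argument for algebraic independence.
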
  

It is shown in \cite{Pet09} that viewed as elements of $\funonYD$, $\{\upmoment{k}\}_{k \geq 1}$ and $\{\downmoment{k}\}_{k \geq 1}$ belong to $\supersym$.

\begin{proposition} \cite[Corollary 4.7]{Pet09}
The elements $\{\upmoment{k}\}_{k \geq 1}$ and $\{\downmoment{k}\}_{k \geq 1}$ are each sets of algebraically independent generators of $\supersym$,
 and
\begin{equation*}
\deg(\upmoment{k}) = \deg(\downmoment{k}) = 2k -1.
\end{equation*}
\end{proposition}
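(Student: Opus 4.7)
The plan is to identify the top-degree contribution of $\downmoment{k+1}$ with a nonzero scalar multiple of Ivanov's inhomogeneous power sum $\shiftedpowersum_{2k+1}$, transfer the corresponding statement to $\upmoment{k}$ via the recursion of Proposition~\ref{prop-recursive-relation-petrov}, and conclude by a triangularity argument relative to the graded algebraically independent generators $\{\shiftedpowersum_{2j+1}\}_{j\geq 0}$ of $\supersym$ guaranteed by Corollary~\ref{coro-pfrak-is-gen/basis}. The key bridge is the identity
\begin{equation*}
\norcharrep{\lambda}(\Sergeevclasssum{\mu}{n}) \;=\; 2^{\ell(\mu)}\,\shiftedpowersum_\mu(\lambda), \qquad \mu \in \oddpartitions[k],\ \lambda \in \strictpartitions[n],
\end{equation*}
which follows immediately from combining Proposition~\ref{prop-character-classsum} with Corollary~\ref{cor-value-of-shiftedpower}. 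This translates central-element computations in $\evencenter{n}$ into identities in $\supersym$.

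By Proposition~\ref{proposition-algebraic-interp-moments}(2), $\downmoment{k+1}(\lambda) = \norcharrep{\lambda}(D_{n,k})$ where $D_{n,k} := \sum_{x \in \LcosSer{n}{n-1}} x\, \JM{n}^{2k}\, x^{-1} \in \evencenter{n}$. Expanding $D_{n,k}$ in the basis $\{\Sergeevclasssum{\mu}{n}\}_{\mu \in \oddpartitions[n]}$ via Proposition~\ref{prop-center-basis} and applying the bridge identity writes $\downmoment{k+1}$ as a linear combination of $\shiftedpowersum_\mu$'s. The top-degree terms arise from products $(1+c_{j_1}c_n)(j_1,n)\cdots(1+c_{j_{2k}}c_n)(j_{2k},n)$ with pairwise distinct $j_i$'s, whose associated permutations multiply to $(2k+1)$-cycles through $n$; summing over the cosets $\LcosSer{n}{n-1}$, these land in $\Sergeevclasssum{(2k+1)}{n}$, yielding $\downmoment{k+1} \in \supersym$ with leading term a nonzero scalar multiple of $\shiftedpowersum_{2k+1}$ of degree $2k+1$. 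The corresponding statement for $\upmoment{k}$ follows by induction using Proposition~\ref{prop-recursive-relation-petrov}, since every cross term $\upmoment{i}\downmoment{j}$ with $i+j=k$ and $i,j>0$ has degree at most $(2i-1)+(2j-1) = 2k-2$, so $\upmoment{k}$ lies in $\supersym$ with the same leading term as $\downmoment{k}$. Both families thus consist of elements whose leading parts coincide with nonzero scalar multiples of $\shiftedpowersum_{2k-1}$, and the triangular change of variables relative to the graded basis $\{\shiftedpowersum_{2j+1}\}$ is invertible in each degree, giving algebraic independence and generation of $\supersym$.

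The main obstacle is the combinatorial verification that the coefficient of $\Sergeevclasssum{(2k+1)}{n}$ in $D_{n,k}$ is genuinely nonzero and stable in $n$: this requires carefully tracking how the Clifford factors $(1+c_{j_i}c_n)$ combine when the transpositions $(j_i,n)$ are multiplied into a long cycle, and checking that the surviving even part is nontrivial after applying the projection $\hyperSergeevProj{n}: \MB{C}[\hyperoctahedral{n}]\twoheadrightarrow \Sern{n}$ (which kills all conjugacy classes not indexed by odd cycle types, by Proposition~\ref{prop-conj-classes-go-to-zero}(3)). Once this coefficient is pinned down, the rest reduces to degree bookkeeping and the triangularity argument above.
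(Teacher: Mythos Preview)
The paper does not give its own proof of this proposition: it is simply cited from Petrov~\cite[Corollary 4.7]{Pet09}, so there is no argument in the paper to compare your proposal against directly.

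That said, your strategy is essentially the algebraic shadow of what the paper establishes diagrammatically elsewhere, and in fact the paper's later results furnish an independent proof along lines close to yours. Namely, Theorem~\ref{thm-identity-of-bubbles} identifies $\downmoment{k+1}$ with $\psi(d_{2k})$ using only Propositions~\ref{proposition-algebraic-interp-moments} and~\ref{prop-curl-generators-bimodules}; Lemma~\ref{lemma-degree-of-alpha} gives $\alpha_{2k+1}=d_{2k}+\loworderterms$ in the grading $\deg(d_{2k})=2k+1$; and Theorem~\ref{thm-main} sends $\alpha_{2k+1}\mapsto 2\shiftedpowersum_{2k+1}$. Combining these yields $\downmoment{k+1}=2\shiftedpowersum_{2k+1}+\loworderterms$ in $\supersym$, which is exactly the leading-term identification you are aiming for, and your recursion/triangularity argument via Proposition~\ref{prop-recursive-relation-petrov} and Corollary~\ref{coro-pfrak-is-gen/basis} then finishes the job.

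The genuine gap in your proposal is precisely the obstacle you flag: working purely inside $\evencenter{n}$, you must show that when $D_{n,k}=\sum_{x\in\LcosSer{n}{n-1}} x\JM{n}^{2k}x^{-1}$ is expanded in the basis $\{\Sergeevclasssum{\mu}{n}\}$, the coefficients are independent of $n$ (for $n$ large) and the $(2k{+}1)$-cycle coefficient is nonzero. This stability is what allows you to lift from $\evencenter{n}$ to a single element of $\supersym$ via the bridge identity, and it is not a triviality---one needs to control how the Clifford factors $(1+c_{j_i}c_n)$ interact across the $2k$ transpositions and survive the projection $\hyperSergeevProj{n}$. The paper sidesteps this entirely: the diagrammatic dot-sliding argument of Lemma~\ref{lemma-degree-of-alpha} performs the leading-term extraction graphically, with the stability in $n$ built into the categorical setup (the diagram $\alpha_{2k+1}$ is a single element of $\Endid$, not a family indexed by $n$). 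So while your route is correct in outline, completing it without the diagrammatics requires real work that you have not supplied; the diagrammatic approach buys you exactly that missing step for free.
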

 
 
 \section{The twisted Heisenberg category} \label{sect-twisted-Heisenberg}
 
 \subsection{The definition of $\Heis$}
The twisted Heisenberg category $\Heis$ was introduced by Cautis and Sussan in \cite{CS15}. It is a $\mathbb{Z}/2\mathbb{Z}$-graded additive monoidal category whose morphisms are described diagrammatically as oriented compact 1-manifolds immersed in $\MB{R} \times [0,1]$. There is an injective algebra homomorphism from the twisted Heisenberg algebra into the split Grothendieck group of $K_0(\operatorname{Kar}(\Heis))$, where $\operatorname{Kar}$ denotes the Karoubi envelope (idempotent completion). As in the untwisted case, this map is conjecturally surjective.

\begin{remark}Cautis and Sussan define their version of $\Heis$ to be idempotent complete; since the center of a category remains invariant under passage to the idempotent completion, we use the non-idempotent complete version here. All results that hold for the center of $\Heis$ also hold for the center of its idempotent completion. \end{remark}

The objects of $\Heis$ are monoidally generated by $P$ and $Q$, so that a generic object in $\Heis$ is a direct sum of sequences of $P$'s and $Q$'s. We denote the empty sequence, which is the unit object of $\Heis$, by $\UnitModule$. The morphisms of $\mathcal{H}_{tw}$ are generated by oriented planar diagrams up to boundary fixing isotopies,
 with generators 
\begin{equation}
 \hspace{0.9cm}
\begin{tikzpicture}
\draw[thick,->](-2,-0.25) to (-2,0.75);
\draw (-1.99,0.25) circle [radius=2pt];
\node at (-1.6,-.25) {,};
\draw[thick,<-](-1,-0.25) to (-1,0.75);
\draw (-1,0.25) circle [radius=2pt];
\node at (-.5,-.25) {,};
\draw[thick,->] (0,-0.25) to (1,0.75);
\draw[thick,->] (1,-0.25) to (0,0.75);
\node at (1.25,-.25) {,};
\draw[thick,->] (1.5,0.5) arc (180:360:5mm);
\node at (2.6,-.25) {,};
\draw[thick,<-] (3,0.5) arc (180:360:5mm);
\node at (4.2,-.25) {,};
\draw[thick,->] (4.5,-.2) arc (180:0:5mm);
\node at (5.8,-.25) {,};
\draw[thick,<-] (6.2,-.2) arc (180:0:5mm);
\end{tikzpicture}
\end{equation}
where the first diagram corresponds to a map $P\rightarrow P\{1\}$ and the second diagram corresponds to a map $Q\rightarrow Q\{1\}$, where $\{1\}$ denotes the $\mathbb{Z}/2\mathbb{Z}$-grading shift. These generators satisfy the following relations:

\begin{equation} \label{up down double crossings}
\begin{tikzpicture}[baseline=(current bounding box.center),scale=0.6]
\draw[thick,->] (0,0) .. controls (1,1) .. (0,2);
\draw[thick,->] (1,2) .. controls (0,1) .. (1,0);
\node at (1.75,1) {$=$};
\draw[thick,->] (2.5,0) -- (2.5,2);
\draw[thick,->] (3.5,2) -- (3.5,0);
\node at (4,0) {,};
\draw[->,thick] (5.25,0) .. controls (6.25,1) .. (5.25,2);
\draw[->,thick] (6.25,0) .. controls (5.25,1) .. (6.25,2);
\node at (7,1) {$=$};
\draw[->,thick] (7.75,0) -- (7.75,2);
\draw[->,thick] (8.75,0) -- (8.75,2);
\node at (9.25,0) {,};
\draw[->,thick] (10.25,0) -- (12.25,2);
\draw[->,thick] (12.25,0) -- (10.25,2);
\draw[->,thick] (11.25,0) .. controls (12.25,1) .. (11.25,2);
\node at (13,1) {$=$};
\draw[->,thick] (13.75,0) -- (15.75,2);
\draw[->,thick] (15.75,0) -- (13.75,2);
\draw[->,thick] (14.75,0) .. controls (13.75,1) .. (14.75,2);	
\node at (16.1,0) {,};
\end{tikzpicture}
\end{equation}

\begin{equation} \label{eqn-symmetric-group-relations}
\begin{tikzpicture}[baseline=(current bounding box.center),scale=0.6]]
\draw[thick,->] (6,2) .. controls (7,1) .. (6,0);
\draw[thick,->] (7,0) .. controls (6,1) .. (7,2);
\node at (8,1) {$=$};
\draw[thick,->] (9,2) -- (9,0);
\draw[thick,->] (10,0) -- (10,2);
\node at (11,1) {$-$};        	
\draw[thick,->] (12,2) arc (180:360:0.75);
\draw[thick,->] (13.5,0) arc (0:180:0.75);
\node at (14.3,1) {$-$};
\draw[thick,->] (15,2) arc (180:360:0.75);
\draw[thick,->] (16.5,0) arc (0:180:0.75);
\draw (15.12,1.6) circle [radius = 3pt];
\draw (16.4,.4) circle [radius = 3pt];
\node at (17,0) {,};
\end{tikzpicture}
\hspace{1.5cm}
\begin{tikzpicture}[baseline=(current bounding box.center),scale=0.6]
\draw[thick,->] (3,2) arc (-180:180:5mm);
\end{tikzpicture}\hspace{6pt}
=1,
\hspace{1.5cm}
\begin{tikzpicture}[baseline=(current bounding box.center),scale=1.5]
\draw[thick](1,0) to [out=90, in=-75](0.95,0.5);
\draw[thick](0.95,0.5) arc (5:355:2mm);
\draw[thick,->] (0.95,0.46) to [out=75, in=270] (1,1);
\end{tikzpicture}\hspace{6pt}
=0,
\end{equation}


\begin{equation}\label{caps}
    \begin{tikzpicture}[baseline=(current bounding box.center)]
    \draw[thick,<-] (3,2) arc (180:0:5mm);
    \draw (3.1,2.3) circle [radius=1.8pt];
    \end{tikzpicture}\hspace{6pt}
    =
    \hspace{6pt}-\hspace{4pt}
    \begin{tikzpicture}[baseline=(current bounding box.center)]
    \draw[thick,<-] (3,2) arc (180:0:5mm);
    \draw (3.95,2.2) circle [radius=1.8pt];
    \node at (4.2,2) {,};
    \end{tikzpicture}
    \hspace{1cm}
    \begin{tikzpicture}[baseline=(current bounding box.center)]
    \draw[thick,<-] (3,2) arc (180:360:5mm);
    \draw (3.07,1.75) circle [radius=1.8pt];
    \end{tikzpicture}\hspace{6pt}
    =
    \hspace{6pt}-\hspace{4pt}
    \begin{tikzpicture}[baseline=(current bounding box.center)]
    \draw[thick,<-] (3,2) arc (180:360:5mm);
    \draw (3.95,1.8) circle [radius=1.8pt];
    \end{tikzpicture}
\end{equation}

\begin{equation}\label{d01=0}
    \begin{tikzpicture}[baseline=(current bounding box.center),scale=0.6]
    \draw[thick,->] (0,0) to (0,2);
    \end{tikzpicture}\hspace{6pt}
    =
    \hspace{6pt}
    \begin{tikzpicture}[baseline=(current bounding box.center),scale=0.6]
    \draw[thick,->] (0,0) to (0,2);
    \draw (0,0.6) circle [radius=3pt];
    \draw (0,1.2) circle [radius=3pt];
    \node at (.5,0) {,};
    \end{tikzpicture}
    \hspace{1cm}
    \begin{tikzpicture}[baseline=(current bounding box.center),scale=0.6]
    \draw[thick,<-] (0,0) to (0,2);
    \end{tikzpicture}\hspace{6pt}
    =
    \hspace{6pt}-\hspace{1mm}
    \begin{tikzpicture}[baseline=(current bounding box.center),scale=0.6]
    \draw[thick,<-] (0,0) to (0,2);
    \draw (0,0.6) circle [radius=3pt];
    \draw (0,1.2) circle [radius=3pt];
    \node at (.5,0) {,};
    \end{tikzpicture}
\hspace{1cm}
    \begin{tikzpicture}[baseline=(current bounding box.center),scale=0.6]
    \draw[thick,->] (3,2) arc (-180:180:5mm);
    \draw (3.95,2.2) circle [radius=3pt];
    \end{tikzpicture}\hspace{6pt}
    =\hspace{6pt}0,
\hspace{1.5cm}
    \begin{tikzpicture}[baseline=(current bounding box.center),scale=0.6]
    \draw[thick,->] (0,0) to (0,2);
    \draw (0,1.6) circle [radius=3pt];
    \draw[fill=] (0.2,1) circle [radius=0.3pt];
    \draw[fill=] (0.4,1) circle [radius=0.3pt];
    \draw[fill=] (0.6,1) circle [radius=0.3pt];
    \draw[thick,->] (0.8,0) to (0.8,2);
    \draw (0.8,0.3) circle [radius=3pt];
    \end{tikzpicture}\hspace{6pt}
    =\hspace{6pt}-\hspace{6pt}
     \begin{tikzpicture}[baseline=(current bounding box.center),scale=0.6]
    \draw[thick,->] (0,0) to (0,2);
    \draw (0,0.3) circle [radius=3pt];
    \draw[fill=] (0.2,1) circle [radius=0.3pt];
    \draw[fill=] (0.4,1) circle [radius=0.3pt];
    \draw[fill=] (0.6,1) circle [radius=0.3pt];
    \draw[thick,->] (0.8,0) to (0.8,2);
    \draw (0.8,1.6) circle [radius=3pt];
    \node at (1,.2) {.};
    \end{tikzpicture}
\end{equation}

Generators commute in all other situations (for instance, hollow dots commute with crossings).

If we denote a right-twist curl by a dot \quad
    \begin{tikzpicture}[baseline=(current bounding box.center),scale=1.5]
\draw[thick] (1,0) to [out=90, in=85](1.05,0.5);
\draw[thick] (1.05,0.5) arc (-175:175:2mm);
\draw[thick,->] (1.05,0.46) to [out=95, in=270] (1,1);
\end{tikzpicture}\hspace{2pt}
    :=
    \hspace{2pt}
    \begin{tikzpicture}[baseline=(current bounding box.center),scale=0.6]
    \draw[thick,->](0,0) to (0,2);
    \draw[fill](0,1) circle[radius=3pt];
    \end{tikzpicture}
\quad then we have the following relations:

\begin{equation}\label{anticommute}
    \begin{tikzpicture}[baseline=(current bounding box.center),scale=0.6]
    \draw[thick,->](0,0) to (0,2);
    \draw[fill](0,0.6) circle[radius=3pt];
    \draw(0,1.2) circle[radius=3pt];
    \end{tikzpicture}\hspace{6pt}
    =
    \hspace{6pt}-\hspace{4pt}
    \begin{tikzpicture}[baseline=(current bounding box.center),scale=0.6]
    \draw[thick,->](0,0) to (0,2);
    \draw(0,0.6) circle[radius=3pt];
    \draw[fill](0,1.2) circle[radius=3pt];
    \node at (.5,.15) {,};
    \end{tikzpicture}
\hspace{1.5cm}
    \begin{tikzpicture}[baseline=(current bounding box.center),scale=0.6]
    \draw[thick,->] (0,0) to (0,2);
    \draw[fill] (0,1.6) circle [radius=3pt];
    \draw[fill] (0.2,1) circle [radius=0.3pt];
    \draw[fill] (0.4,1) circle [radius=0.3pt];
    \draw[fill] (0.6,1) circle [radius=0.3pt];
    \draw[thick,->] (0.8,0) to (0.8,2);
    \draw[fill] (0.8,0.3) circle [radius=3pt];
    \end{tikzpicture}\hspace{6pt}
    =
    \hspace{6pt}
     \begin{tikzpicture}[baseline=(current bounding box.center),scale=0.6]
    \draw[thick,->] (0,0) to (0,2);
    \draw[fill] (0,0.3) circle [radius=3pt];
    \draw[fill] (0.2,1) circle [radius=0.3pt];
    \draw[fill] (0.4,1) circle [radius=0.3pt];
    \draw[fill] (0.6,1) circle [radius=0.3pt];
    \draw[thick,->] (0.8,0) to (0.8,2);
    \draw[fill] (0.8,1.6) circle [radius=3pt]; 
    \node at (1,.15) {,};  
    \end{tikzpicture}
    \hspace{1.5cm}
    \begin{tikzpicture}[baseline=(current bounding box.center),scale=0.6]
    \draw[thick,->] (0,0) to (0,2);
    \draw (0,1.6) circle [radius=3pt];
    \draw[fill] (0.2,1) circle [radius=0.3pt];
    \draw[fill] (0.4,1) circle [radius=0.3pt];
    \draw[fill] (0.6,1) circle [radius=0.3pt];
    \draw[thick,->] (0.8,0) to (0.8,2);
    \draw[fill] (0.8,0.3) circle [radius=3pt];
    \end{tikzpicture}\hspace{6pt}
    =
    \hspace{6pt}
     \begin{tikzpicture}[baseline=(current bounding box.center),scale=0.6]
    \draw[thick,->] (0,0) to (0,2);
    \draw (0,0.3) circle [radius=3pt];
    \draw[fill] (0.2,1) circle [radius=0.3pt];
    \draw[fill] (0.4,1) circle [radius=0.3pt];
    \draw[fill] (0.6,1) circle [radius=0.3pt];
    \draw[thick,->] (0.8,0) to (0.8,2);
    \draw[fill] (0.8,1.6) circle [radius=3pt];
    \node at (1,.15) {.};  
    \end{tikzpicture}
\end{equation}

From \cite{OR17} we have the following ``dot sliding'' relations,

\begin{equation}\label{dotSlide: bottomLeft}
\begin{tikzpicture}[baseline=(current bounding box.center),scale=0.6]
    \draw[thick,->](0,0) to (1,2);
    \draw[thick,->](1,0) to (0,2);
    \draw[fill](0.25,0.5) circle[radius=3pt];
\end{tikzpicture}\hspace{6pt}
=
\hspace{6pt}
\begin{tikzpicture}[baseline=(current bounding box.center),scale=0.6]
    \draw[thick,->](0,0) to (1,2);
    \draw[thick,->](1,0) to (0,2);
    \draw[fill](0.75,1.5) circle[radius=3pt];
\end{tikzpicture}\hspace{6pt}
+\hspace{2mm}
\begin{tikzpicture}[baseline=(current bounding box.center),scale=0.6]
    \draw[thick,->](0,0) to (0,2);
    \draw[thick,->](0.5,0) to (0.5,2);
\end{tikzpicture}
\hspace{2mm}+\hspace{2mm}
\begin{tikzpicture}[baseline=(current bounding box.center),scale=0.6]
    \draw[thick,->](0,0) to (0,2);
    \draw[thick,->](0.5,0) to (0.5,2);
    \draw(0,1.2) circle[radius=3pt];
    \draw(0.5,0.6) circle[radius=3pt];
    \node at (1,.2) {,};
\end{tikzpicture}
\end{equation}

\begin{equation}\label{dotSlide: topLeft}
\begin{tikzpicture}[baseline=(current bounding box.center),scale=0.6]
    \draw[thick,->](0,0) to (1,2);
    \draw[thick,->](1,0) to (0,2);
    \draw[fill](0.25,1.5) circle[radius=3pt];
\end{tikzpicture}\hspace{6pt}
=
\hspace{6pt}
\begin{tikzpicture}[baseline=(current bounding box.center),scale=0.6]
    \draw[thick,->](0,0) to (1,2);
    \draw[thick,->](1,0) to (0,2);
    \draw[fill](0.75,0.5) circle[radius=3pt];
\end{tikzpicture}\hspace{6pt}
+
\hspace{2mm}
\begin{tikzpicture}[baseline=(current bounding box.center),scale=0.6]
    \draw[thick,->](0,0) to (0,2);
    \draw[thick,->](0.5,0) to (0.5,2);
\end{tikzpicture}\hspace{2mm}
-
\hspace{2mm} 
\begin{tikzpicture}[baseline=(current bounding box.center),scale=0.6]
    \draw[thick,->](0,0) to (0,2);
    \draw[thick,->](0.5,0) to (0.5,2);
    \draw(0,1.2) circle[radius=3pt];
    \draw(0.5,0.6) circle[radius=3pt];
    \node at (1,.2) {.};
\end{tikzpicture}
\end{equation}

We can also move clockwise ``bubbles'' with dots on them through strands.

\begin{lemma} \label{lemma-bubble-sliding-relation}
Let $n \geq 0$, then
\begin{center}
\begin{tikzpicture}[scale = .7]

\draw[thick,->] (0,0) circle [radius = 12pt];
\node at (-.42,0) {\arrowlines};
\draw[fill=black] (-.3,.27) circle [radius = 2pt]; 
\node at (-.7,.4) {\small{$2n$}};

\draw[thick,->] (1,-1)--(1,1);

\node at (2,0) {$=$};

\draw[thick,->] (3,-1)--(3,1);

\draw[thick,->] (4,0) circle [radius = 12pt];
\node at (3.58,0) {\arrowlines};
\draw[fill=black] (3.7,.27) circle [radius = 2pt]; 
\node at (3.3,.4) {\small{$2n$}};

\node at (4.9,0) {$+$};

\node at (5.9,0) {\small{$(4n$$+$$2)$}};
\draw[thick,->] (6.7,-1)--(6.7,1);
\draw[fill = black] (6.7,.4) circle [radius = 2pt];
\node at (7.1,.55) {\small{$2n$}};

\node at (7.7,0) {$-$};

\node at (9.7,0) {$\displaystyle 2 \sum_{a + b = 2n-1} \sum_{k = 1}^b$};

\draw[thick,->] (13,0) circle [radius = 12pt];
\node at (12.58,0) {\arrowlines};
\draw[fill=black] (13.3,.27) circle [radius = 2pt]; 
\draw[fill=black] (11.7,.4) circle [radius = 2pt];
\node at (13.4,.7) {\small{$b$-$k$}};
\node at (12.25,.53) {\small{$a{+}k$}};

\draw[thick,->] (11.7,-1)--(11.7,1);

\node at (13.6,-.8) {.};

\end{tikzpicture}
\end{center}
\end{lemma}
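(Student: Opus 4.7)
The plan is to prove this bubble-sliding identity by induction on $n$, using the local relations of $\Heis$ listed in \eqref{up down double crossings}--\eqref{dotSlide: topLeft}. The key tools are the double-crossing relation in \eqref{eqn-symmetric-group-relations} and the filled-dot-through-crossing relations \eqref{dotSlide: bottomLeft} and \eqref{dotSlide: topLeft}, which together govern how filled dots migrate across crossings while generating correction terms.

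First I would verify the base case $n = 0$ directly. When $n = 0$ the double sum on the right-hand side is empty (the index set $a+b = -1$ with $a, b \geq 0$ contributes nothing) and the claim reduces to sliding a clockwise bubble with no filled dots past an upward strand with an additive correction of $2$ times the strand. This should follow by isotoping the bubble to encircle the strand via a cup-cap pair, then collapsing using \eqref{eqn-symmetric-group-relations}, \eqref{caps}, and \eqref{d01=0} together with the evaluation of an empty clockwise bubble.

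For the inductive step, I would introduce a cup-cap pair of strands next to the vertical strand so that a pair of arcs from the bubble can be isotoped to cross the vertical strand twice. Applying the double-crossing relation from \eqref{eqn-symmetric-group-relations} replaces this configuration by its uncrossed version minus two correction terms. The $2n$ filled dots on the bubble are then pushed across the induced crossings by iterated application of \eqref{dotSlide: bottomLeft} and \eqref{dotSlide: topLeft}, each step producing a main term (with dots redistributed between the bubble and the strand) plus a pair of uncrossed error terms. Collecting these contributions according to how many dots have migrated to the strand ($a+k$) versus how many remain on the bubble ($b-k$), with $a+b = 2n-1$ reflecting the absorption of one dot into a scalar via \eqref{d01=0}, yields precisely the double sum on the right-hand side. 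The constant coefficient $(4n+2)$ collects the contributions in which all dots consolidate onto a single strand.

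The main obstacle will be the sign and coefficient bookkeeping: the anti-commutation \eqref{anticommute} of filled and hollow dots, the negative signs attached to caps in \eqref{caps}, and the parity of dot migrations interact to determine both the precise scalar $4n+2$ and the factor $-2$ in front of the double sum. Keeping these signs straight through the induction is delicate, since at each step many intermediate terms partially cancel. I would cross-check the final formula by specializing to small cases such as $n = 1$, where both sides can be computed independently from the defining relations without invoking the induction hypothesis.
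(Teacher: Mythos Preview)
The paper does not prove this lemma directly: it simply cites \cite[Lemma 4.7]{OR17} and says the result follows from that proof together with the dot-sliding relation \eqref{dotSlide: topLeft}. Your proposal to derive it from scratch using the local relations is therefore more detailed than what the paper itself provides; you are essentially reconstructing the argument that lives in the cited reference.

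Your overall strategy---deform the bubble so its strand crosses the vertical strand twice, resolve with the down--up double-crossing relation in \eqref{eqn-symmetric-group-relations}, then migrate the $2n$ filled dots across using \eqref{dotSlide: bottomLeft}--\eqref{dotSlide: topLeft} and collect the correction terms---is correct and is exactly how such bubble-slide identities are obtained.

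However, the framing as ``induction on $n$'' is off. Your description of the inductive step never invokes the induction hypothesis: what you describe is a direct computation valid for any fixed $n$. If you intend a genuine induction you must explain how a bubble with $2n$ dots reduces to the case of a bubble with $2(n-1)$ dots, and your outline does not do that. It is cleaner to present the argument as a single direct calculation: create the two crossings, apply \eqref{eqn-symmetric-group-relations} once, then push the $2n$ dots one at a time across the remaining crossing via \eqref{dotSlide: bottomLeft}--\eqref{dotSlide: topLeft}, each push producing two correction terms; summing these yields precisely the double sum over $a+b=2n-1$ and the coefficient $4n+2$ on the dotted strand.

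One small slip in your base case: you refer to ``the evaluation of an empty clockwise bubble,'' but the clockwise bubble $d_0$ is a free generator of $\Endid$, not a scalar. It is the \emph{counterclockwise} empty bubble that evaluates to $1$ by \eqref{eqn-symmetric-group-relations}. For $n=0$ the clockwise bubble $d_0$ survives on both sides, and the $+2$ correction arises from the two resolution terms in \eqref{eqn-symmetric-group-relations} (the hollow-dot term contributing equally after the dots cancel via \eqref{d01=0}).
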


\begin{proof}
This follows from the proof of \cite[Lemma 4.7]{OR17} along with the dot sliding relation \eqref{dotSlide: topLeft}.
\end{proof}

As a consequence of relations \eqref{up down double crossings} and \eqref{eqn-symmetric-group-relations}, there are homomorphisms $\symtoHom{n}: \Sern{n}^{opp} \rightarrow \Hom_{\Heis}(P^n)$ which send
\vspace{1mm}
\begin{center}
\begin{tikzpicture}[scale = .6]

\draw[->,thick] (-.3,0)--(1,0);
\draw[thick] (-.3,.1)--(-.3,-.1);
\node at (.2,.3) {\scriptsize{$\symtoHom{n}$}};
\node at (-1,0) {$s_k$};

\draw[thick,->] (2,-1) -- (2,1);
\draw[thick,->] (3.2,-1) -- (3.2,1);
\draw[thick,->] (3.7,-1) to [out =90,in = 270] (4.2,1);
\draw[thick,->] (4.2,-1) to [out =90,in = 270] (3.7,1);
\draw[thick,->] (4.7,-1) -- (4.7,1);
\draw[thick,->] (5.9,-1) -- (5.9,1);
\draw [decorate,decoration={brace,amplitude=3pt},xshift=-4pt,yshift=0pt]
(3.3,-1.2) -- (2,-1.2) node [black,midway,xshift=0cm,yshift = -.3cm] 
{\scriptsize{ $k$-$1$ strands}};
\draw [decorate,decoration={brace,amplitude=3pt},xshift=-4pt,yshift=0pt]
(6.1,-1.2) -- (4.9,-1.2) node [black,midway,xshift=0cm,yshift = -.3cm] 
{\scriptsize{ $n$-$k$-$1$ strands}};

\node at (2.6,0) {\dots};
\node at (5.3,0) {\dots};

\end{tikzpicture} 
\hspace{1.5 cm}
\begin{tikzpicture}[scale = .6]
\draw[->,thick] (-.3,0)--(1,0);
\draw[thick] (-.3,.1)--(-.3,-.1);
\node at (.2,.3) {\scriptsize{$\symtoHom{n}$}};
\node at (-1,0) {$c_k$};

\draw[thick,->] (2,-1) -- (2,1);
\draw[thick,->] (3.2,-1) -- (3.2,1);
\draw[thick,->] (3.95,-1) to [out =90,in = 270] (3.95,1);
\draw (3.95,0) circle [radius=3.3pt];
\draw[thick,->] (4.7,-1) -- (4.7,1);
\draw[thick,->] (5.9,-1) -- (5.9,1);
\node at (6.2,-.6) {$.$};
\draw [decorate,decoration={brace,amplitude=3pt},xshift=-4pt,yshift=0pt]
(3.3,-1.2) -- (2,-1.2) node [black,midway,xshift=0cm,yshift = -.3cm] 
{\scriptsize{ $k$-$1$ strands}};
\draw [decorate,decoration={brace,amplitude=3pt},xshift=-4pt,yshift=0pt]
(6.1,-1.2) -- (4.9,-1.2) node [black,midway,xshift=0cm,yshift = -.3cm] 
{\scriptsize{ $n$-$k$ strands}};

\node at (2.6,0) {\dots};
\node at (5.3,0) {\dots};

\end{tikzpicture}
\end{center}

In order to simplify our diagrams we write the image of $x \in \Sern{n}$ under $\symtoHom{n}$ as

\vspace{2mm}
\begin{equation} \label{picture-sym-group-emedding}
\begin{tikzpicture}[scale = .8]

\node at (-4.4,0) {$\symtoHom{n}(x) \;\; =: \;\;$};

\draw[thick,->] (-1.7,-1)--(-1.7,1);
\draw[thick,->] (-.8,-1)--(-.8,1);
\draw[thick,->] (-2,-1)--(-2,1);


\node at (-1.35,.6) {$\cdot$};
\node at (-1.2,.6) {$\cdot$};
\node at (-1.05,.6) {$\cdot$};

\draw[fill=white,thick] (-2.3,-.3) rectangle (-.5,.3);

\node at (-1.3,0) {\scriptsize{$x$}};

\node at (-.3,-.9) {.};

\draw [decorate,decoration={brace,amplitude=3pt},xshift=-4pt,yshift=0pt]
(-.6,-1.2) -- (-2,-1.2) node [black,midway,xshift=0cm,yshift = -.3cm] 
{\scriptsize{$n$ strands}};

\end{tikzpicture}
\end{equation}


 \subsection{The center of $\Heis$} \label{Section-H-Center}
 
The center of a $\Bbbk$-linear monoidal category $\MC{C}$ is defined to be the endomorphism algebra of the monoidal unit $\UnitModule$ of $\MC{C}$, that is $\End_{\MC{C}}(\UnitModule)$. In a diagrammatic category such as $\Heis$, $\Endid$ is then by definition the commutative algebra of closed diagrams where multiplication of two closed diagrams corresponds to placing them next to each other.

We define the following elements of $\Endid$:
\begin{equation} \label{eqn-bubble-def}
d_{2n} := 
\begin{tikzpicture}[baseline=(current bounding box.center)]
\draw[thick,<-] (3,2) arc (-180:180:5mm);
\fill (3.95,2.2) circle [radius=2pt];
\node at (4.3,2.2) {$2n$};
\end{tikzpicture} 
\quad\quad \text{and}\quad\quad \bar{d}_{2n}:=
\begin{tikzpicture}[baseline=(current bounding box.center)]
\draw[thick,->] (3,2) arc (-180:180:5mm);
\fill (3.95,2.2) circle [radius=2pt];
\node at (4.3,2.2) {$2n$};
\end{tikzpicture}.
\end{equation}
It follows from the defining relations of $\mathcal{H}_{tw}$ that bubbles with an even number of hollow dots are equivalent to bubbles with no hollow dots, and that bubbles with an odd number of hollow dots are zero. Additionally, bubbles with odd numbers of solid dots are zero. Hence it suffices to consider the bubbles with an even number of solid dots and no hollow dots.
\begin{proposition}\cite[Proposition 4.4]{OR17} \label{prop-generating-set}
The elements $\{d_{2n}\}_{n \geq 0}$ are algebraically independent generators of $\Endid$, i.e. there is an isomorphism
\begin{equation*}
\Endid \cong \MB{C}[d_0,d_2,d_4,\dots].
\end{equation*}
\end{proposition}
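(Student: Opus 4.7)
The plan is to establish two claims: (i) every closed diagram in $\Heis$ can be rewritten as a polynomial in the $d_{2n}$, and (ii) the resulting expressions are algebraically independent, so no nontrivial polynomial relation holds in $\Endid$.

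For the spanning claim, I would proceed by induction on the combinatorial complexity of a closed diagram (number of crossings plus number of nested components). Using the reductions recorded in \eqref{up down double crossings}--\eqref{anticommute}, together with the two dot-sliding identities \eqref{dotSlide: bottomLeft}--\eqref{dotSlide: topLeft}, I can reduce any closed diagram to a disjoint union of simple bubbles. At this stage relation \eqref{caps} shows that a hollow dot on a cap/cup is equivalent to minus the same hollow dot on the opposite endpoint, \eqref{d01=0} forces any pair of hollow dots on a strand to be trivial, and the last relation of \eqref{d01=0} kills clockwise bubbles whose only decoration is a single hollow dot. Combined with the dot-anticommutation in \eqref{anticommute}, this confirms that only bubbles with an even number of solid dots and no hollow dots survive. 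Moreover, the bubble-sliding Lemma \ref{lemma-bubble-sliding-relation} lets me push clockwise bubbles outward through any remaining strands at the cost of lower-order bubble terms, so recursion finishes the spanning. Finally, one can show (either by induction or directly, as in \cite{OR17}) that every counterclockwise bubble $\bar d_{2n}$ can be expressed as a polynomial in the $d_{2k}$, so the clockwise ones suffice as a generating set.

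For algebraic independence, I would use the family of Fock space functors $\{\FockSpaceFunctor{n}\}_{n\geq 0}$, which restrict to algebra homomorphisms $\FockSpaceFunctor{n}:\Endid\twoheadrightarrow \evencenter{n}$. Each $d_{2k}$ acts on a simple supermodule $\SimpleSer{\lambda}$ by a scalar $f_{2k}(\lambda)$. The diagrammatic interpretation of the bubble together with Proposition \ref{proposition-algebraic-interp-moments} identifies these scalars (up to an explicit nonzero constant) with the Petrov moments $\downmoment{k+1}(\lambda)$ (or equivalently $\upmoment{k}$), viewed as elements of $\funonYD$. Since $\{\downmoment{k}\}_{k\geq 1}$ is a family of algebraically independent generators of $\supersym\subset \funonYD$, any polynomial relation $P(d_0,d_2,\dots)=0$ in $\Endid$ would push forward under $\bigoplus_n \FockSpaceFunctor{n}$ to a polynomial relation among the $\downmoment{k+1}$ as functions on $\strictpartitions$, forcing $P=0$.

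The main obstacle is the spanning step: showing that the graphical relations really do reduce an arbitrary closed diagram to a product of the bubbles $d_{2n}$ without leaving exotic remainders. The technical core is handling nested and linked closed components, where one must invoke the sliding identities carefully so that the induction on complexity strictly decreases; the algebraic independence step, by contrast, is essentially a computation on Sergeev algebras that reduces directly to the (already established) independence of Petrov's moment functions on the Schur graph.
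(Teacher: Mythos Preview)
The paper does not supply its own proof of this proposition; it is quoted verbatim from \cite[Proposition 4.4]{OR17}. So there is no in-paper argument to compare against, and your task was really to reconstruct a proof.

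Your spanning argument has the right shape and matches the standard reduction used in \cite{OR17}: resolve crossings, slide bubbles outward via Lemma~\ref{lemma-bubble-sliding-relation}, kill the bubbles carrying odd solid dots or hollow dots, and finally convert any surviving counterclockwise bubbles to clockwise ones via the recursion in Proposition~\ref{prop-recursive}. That part is fine, with the usual caveat that making the induction on ``crossings plus nesting'' rigorous requires some bookkeeping.

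Your independence argument, however, takes a genuinely different route. You push $d_{2k}$ through the Fock functors and, via Proposition~\ref{prop-curl-generators-bimodules} and Proposition~\ref{proposition-algebraic-interp-moments}, identify $d_{2k}(\lambda)$ with Petrov's $\downmoment{k+1}(\lambda)$, then invoke the algebraic independence of the $\downmoment{k}$ in $\supersym\subset\funonYD$. This is correct and not circular---neither of those two propositions relies on Proposition~\ref{prop-generating-set}---but it does forward-reference Proposition~\ref{prop-curl-generators-bimodules}, which appears later in Section~\ref{subsect-diagrams-as-bimodule-homs}, so you should flag that its proof is independent of the present statement. The upside is a proof that recycles this paper's central computations; the cost is importing Petrov's independence theorem as a black box, whereas \cite{OR17} argues more directly from the Jucys--Murphy description of $\FockSpaceFunctor{n}(d_{2k})$ without needing the transition-measure interpretation.
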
 
 
 The elements $\{d_{2n}\}_{n \geq 0}$ and $\{\bar{d}_{2n}\}_{n \geq 0}$ are related via a recursive relation.
\begin{proposition} \cite[Lemma 4.3]{OR17} \label{prop-recursive}
For $n \geq 1$,
\begin{equation*}
\bar{d}_{2n} = \sum_{2a+2b = 2n-2} \bar{d}_{2a}d_{2b}.
\end{equation*}
\end{proposition}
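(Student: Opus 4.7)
The identity is equivalent, via $\bar{d}_0 = 1$ (the CCW loop relation in \eqref{eqn-symmetric-group-relations}), to the generating function relation $\bar{D}(t)(1-t^2 D(t))=1$, where $D(t)=\sum_{n\geq 0} d_{2n}t^{2n}$ and $\bar{D}(t)=\sum_{n\geq 0} \bar{d}_{2n}t^{2n}$. My plan is to establish this by a diagrammatic computation combined with Lemma \ref{lemma-bubble-sliding-relation}.

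First, I would draw $\bar{d}_{2n}$ as an upward strand carrying $2n$ solid dots, closed at the top by a clockwise cap and at the bottom by a clockwise cup. Using the anticommutation relations \eqref{anticommute}, I can collect all $2n$ dots at a convenient spot on the loop. Next, I would insert a canceling pair of up-down crossings at this spot (legal by \eqref{up down double crossings}) and then apply the cap-cup decomposition \eqref{eqn-symmetric-group-relations} to one of these crossings. This produces three terms: (i) a parallel-strands term, which simplifies back to the original $\bar{d}_{2n}$ by \eqref{up down double crossings}; (ii) a cap-cup term, which pinches the single CCW loop into a CCW loop carrying some subset of dots tangent to a CW loop carrying the rest, yielding a product $\bar{d}_{2a}d_{2b}$; and (iii) a cap-cup term decorated with hollow dots, which vanishes because hollow dots on a closed bubble reduce by \eqref{d01=0}.

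The middle term is the crucial one. To enumerate all contributions, I would slide the $2n-2$ solid dots across the two newly-created pinch points using \eqref{dotSlide: bottomLeft} and \eqref{dotSlide: topLeft}, partitioning them in all possible ways between the resulting CCW and CW loops. Each partition $(2a,2b)$ with $2a+2b=2n-2$ contributes exactly $\bar{d}_{2a}d_{2b}$ once; contributions with an odd number of dots on either loop vanish by the bubble parity rules (odd-dotted bubbles are zero). The bubble-sliding lemma (Lemma \ref{lemma-bubble-sliding-relation}) is then invoked to absorb any residual correction terms back into the $\bar{d}_{2n}$ on the left side, so that the final equation reads $\bar{d}_{2n} = \bar{d}_{2n} + (\text{middle term sum}) - \bar{d}_{2n}$, i.e., $\bar{d}_{2n} = \sum_{a+b=n-1} \bar{d}_{2a}d_{2b}$.

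The main obstacle is the careful bookkeeping of dot positions and signs as dots are pushed through the inserted crossings, and reconciling the double sum $\sum_{a+b=2n-1}\sum_{k=1}^b$ that appears in Lemma \ref{lemma-bubble-sliding-relation} with the simpler single sum on the RHS of the claim. A cleaner alternative would be to prove the claim by strong induction on $n$: the base case $\bar{d}_2 = d_0$ follows by sliding the two solid dots on $\bar{d}_2$ across the clockwise cap using \eqref{caps} and \eqref{anticommute}, reducing the CCW loop with two dots to a CW loop with none; the inductive step reduces the correction terms from Lemma \ref{lemma-bubble-sliding-relation} to smaller instances of the claimed identity.
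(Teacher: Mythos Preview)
The paper does not give its own proof of this proposition; it simply cites \cite[Lemma~4.3]{OR17}. So there is no in-paper argument to compare against, and your task is really to reproduce (or replace) the argument from \cite{OR17}.

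Your generating-function reformulation $\bar{D}(t)\bigl(1-t^{2}D(t)\bigr)=1$ is correct and is a good organizing device. However, the diagrammatic argument you sketch has genuine gaps:

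\begin{itemize}
\item The relation in \eqref{eqn-symmetric-group-relations} that you want to use is a \emph{bigon} relation (it rewrites a pair of down--up crossings, not a single crossing). So ``insert a canceling pair of crossings and then apply \eqref{eqn-symmetric-group-relations} to \emph{one} of them'' is not an available move. If instead you apply \eqref{eqn-symmetric-group-relations} to the whole inserted bigon, the ``parallel-strands'' term literally returns you to the original $\bar{d}_{2n}$ and you get $0=$ (sum of cap--cup terms), which is not the desired recursion. You need to set things up so that the bigon being resolved is \emph{not} the one you inserted for free; in practice the usual proof realizes one of the $2n$ solid dots as an explicit right-twist curl and resolves the resulting down--up bigon that appears after an isotopy, rather than inserting an auxiliary canceling pair.
\item Your claim that the hollow-dot cap--cup term ``vanishes by \eqref{d01=0}'' is too fast. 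Relation \eqref{d01=0} kills a single hollow dot on a counterclockwise bubble only; one of the two closed components produced here is clockwise. You must first move the hollow dots (using \eqref{caps} and \eqref{anticommute}) so that the surviving hollow dot lands on the counterclockwise component, and check the sign, before you can invoke \eqref{d01=0}.
\item The base case $\bar{d}_{2}=d_{0}$ cannot be obtained by ``sliding solid dots across the cap using \eqref{caps}'': relation \eqref{caps} moves \emph{hollow} dots across caps/cups, not solid dots. Sliding solid dots around a closed loop does not change the orientation of the bubble.
\item Lemma~\ref{lemma-bubble-sliding-relation} concerns moving a clockwise bubble through a \emph{separate} upward strand and has no evident role in relating a single $\bar{d}_{2n}$ to products $\bar{d}_{2a}d_{2b}$; invoking it here looks like a red herring, and the sentence ``$\bar{d}_{2n}=\bar{d}_{2n}+(\text{middle term})-\bar{d}_{2n}$'' does not encode a valid deduction.
\end{itemize}

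In short, the strategy (expand a dot as a curl, resolve the resulting down--up bigon via \eqref{eqn-symmetric-group-relations}, track how the $2n-1$ remaining solid dots split between the two resulting closed components, and kill odd/hollow contributions) is essentially the right one, but your write-up misidentifies which relation acts on what, misstates the base case, and leans on Lemma~\ref{lemma-bubble-sliding-relation} where it is not needed. If you rewrite the argument starting from ``draw one solid dot on $\bar{d}_{2n}$ as a right curl and isotope to expose a down--up bigon,'' the pieces you describe will line up correctly.
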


\begin{corollary}
The elements $\{\bar{d}_{2n}\}_{n \geq 1}$ are another algebraically independent generating set of $\Endid$.
\end{corollary}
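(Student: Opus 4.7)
The plan is to combine the recursion in Proposition \ref{prop-recursive} with the polynomial-ring description of $\Endid$ from Proposition \ref{prop-generating-set}. Specifically, I will exhibit a triangular, hence invertible, polynomial change of variables between $\{d_{2n}\}_{n \geq 0}$ and $\{\bar{d}_{2n}\}_{n \geq 1}$, which immediately transfers both the generation and the algebraic independence from the former to the latter.

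First I would observe that $\bar{d}_0 = 1$. This is the no-dots, counterclockwise bubble appearing in the second relation of \eqref{eqn-symmetric-group-relations}, which is set equal to $1$. (This also explains why the indexing for $\{\bar{d}_{2n}\}$ begins at $n=1$: the $n=0$ element is just the unit.) With this identification, I would then prove by induction on $n \geq 1$ that there exists a polynomial $P_n \in \MB{C}[x_0, x_1, \dots, x_{n-2}]$ with
\begin{equation*}
\bar{d}_{2n} = d_{2n-2} + P_n(d_0, d_2, \dots, d_{2n-4}).
\end{equation*}
The base case $n = 1$ is $\bar{d}_2 = \bar{d}_0 \, d_0 = d_0$, so $P_1 = 0$. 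For the induction step I would isolate the $a=0$ term of the recursion in Proposition \ref{prop-recursive}, writing
\begin{equation*}
\bar{d}_{2n} = \bar{d}_0\, d_{2n-2} + \sum_{a=1}^{n-1} \bar{d}_{2a}\, d_{2n-2-2a} = d_{2n-2} + \sum_{a=1}^{n-1} \bar{d}_{2a}\, d_{2n-2-2a},
\end{equation*}
and then apply the inductive hypothesis to each $\bar{d}_{2a}$ with $1 \leq a \leq n-1$: each such $\bar{d}_{2a}$ is a polynomial in $d_0, \dots, d_{2a-2}$, so its product with $d_{2n-2-2a}$ is a polynomial in $d_0, \dots, d_{2n-4}$. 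Summing these gives the required $P_n$.

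Finally, from this triangular form I would conclude by inverting. Consider the $\MB{C}$-algebra homomorphism $\Phi : \MB{C}[y_1, y_2, \dots] \to \Endid$ sending $y_n \mapsto \bar{d}_{2n}$. By Proposition \ref{prop-generating-set} it suffices to show $\Phi$ is an isomorphism onto $\MB{C}[d_0, d_2, d_4, \dots]$. Surjectivity follows because the identity above lets me solve inductively for $d_{2n-2}$ as a polynomial in $\bar{d}_2, \bar{d}_4, \dots, \bar{d}_{2n}$, so every $d_{2k}$ lies in the image. Injectivity follows from the same triangular structure: any nonzero polynomial $F(y_1, y_2, \dots)$ with $F(\bar{d}_2, \bar{d}_4, \dots) = 0$ would, after substituting each $\bar{d}_{2n} = d_{2n-2} + P_n(d_0, \dots, d_{2n-4})$, yield a nonzero polynomial relation among $\{d_{2n}\}_{n \geq 0}$ (one sees it is nonzero by examining the highest-index variable appearing), contradicting Proposition \ref{prop-generating-set}. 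Hence $\Phi$ is an isomorphism, proving the corollary.

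No step poses a real obstacle; the only nontrivial point is identifying $\bar{d}_0 = 1$ from the diagrammatic relations, after which the argument is a standard triangular change-of-variables for polynomial rings.
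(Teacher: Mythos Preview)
Your proof is correct and is exactly the argument the paper has in mind: the corollary is stated in the paper without proof, as an immediate consequence of Propositions \ref{prop-generating-set} and \ref{prop-recursive}, and your triangular change-of-variables $\bar{d}_{2n} = d_{2n-2} + P_n(d_0,\dots,d_{2n-4})$ is precisely the standard way to spell that out. The identification $\bar{d}_0 = 1$ from \eqref{eqn-symmetric-group-relations} is the only point needing care, and you handled it correctly.
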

 
Another natural set of diagrams in $\Endid$ come from the closure of permutations. We define

\begin{center}
\begin{tikzpicture}[scale=0.6]

\draw[thick] (-1.7,-1)--(-1.7,1);
\draw[thick] (-.8,-1)--(-.8,1);

\node at (-1.7,.6) {\arrowlines};
\node at (-.8,.6) {\arrowlines};

\node at (-1.4,.7) {$\cdot$};
\node at (-1.2,.7) {$\cdot$};
\node at (-1.0,.7) {$\cdot$};

\draw[fill=white,thick] (-2,-.3) rectangle (-.5,.3);

\node at (-1.3,0) {\scriptsize{$k$}};

\node at (.5,0) {$=$};

\draw [decorate,decoration={brace,amplitude=7pt},xshift=-4pt,yshift=0pt]
(1.1,1.2) -- (2.6,1.2) node [black,midway,xshift=0cm,yshift = .5cm] 
{\footnotesize $k$ strands};

\draw[thick] (2.3,-1)--(2.3,1);
\draw[thick] (1.4,-1)--(1.4,1);
\draw[thick] (2.6,-1) to [out =90,in = 270] (2.6,-.5) to [out = 90, in = 270] (1.1,.5) to [out = 90, in =270] (1.1,1);

\node at (2.3,.6) {\arrowlines};
\node at (1.4,.6) {\arrowlines};
\node at (1.1,.6) {\arrowlines};

\node at (2,.7) {$\cdot$};
\node at (1.8,.7) {$\cdot$};
\node at (1.6,.7) {$\cdot$};

\node at (3,-1) {$.$};

\end{tikzpicture}
\end{center}
For $\nu = (\nu_1,\dots, \nu_{r}) \in \partitionsn{k}$, let

\begin{equation} \label{tikzpicture-def-for-lambda}
\begin{tikzpicture}[scale=0.6]

\draw[thick] (-1.7,-1)--(-1.7,1);
\draw[thick] (-.8,-1)--(-.8,1);

\node at (-1.7,.6) {\arrowlines};
\node at (-.8,.6) {\arrowlines};

\node at (-1.4,.7) {$\cdot$};
\node at (-1.2,.7) {$\cdot$};
\node at (-1.0,.7) {$\cdot$};

\draw[fill=white,thick] (-2,-.3) rectangle (-.5,.3);

\node at (-1.3,0) {\scriptsize{$\nu$}};

\node at (.5,0) {$:=$};

\draw[thick] (2.3,-1)--(2.3,1);
\draw[thick] (1.4,-1)--(1.4,1);

\draw[fill=white,thick] (2.6,-.3) rectangle (1.1,.3);

\node at (2.3,.6) {\arrowlines};
\node at (1.4,.6) {\arrowlines};

\node at (2,.7) {$\cdot$};
\node at (1.8,.7) {$\cdot$};
\node at (1.6,.7) {$\cdot$};

\node at (3.2,0) {$\cdot$};
\node at (3.4,0) {$\cdot$};
\node at (3.6,0) {$\cdot$};

\node at (4.7,.7) {$\cdot$};
\node at (4.9,.7) {$\cdot$};
\node at (5.1,.7) {$\cdot$};

\draw[thick] (4.5,-1)--(4.5,1);
\draw[thick] (5.4,-1)--(5.4,1);

\node at (4.5,.6) {\arrowlines};
\node at (5.4,.6) {\arrowlines};

\draw[fill=white,thick] (4.2,-.3) rectangle (5.7,.3);

\node at (5.05,0) {\scriptsize{$\nu_r$}};
\node at (1.85,0) {\scriptsize{$\nu_1$}};

\end{tikzpicture}
\end{equation}
then we define

\begin{center}
\begin{tikzpicture}[scale = .8]

\node at (-4,0) {$\alpha_\nu \quad := $};

\draw[thick] (0,0) circle (1.8 cm);
\draw[thick] (0,0) circle (1.6 cm);
\draw[thick] (0,0) circle (1 cm);

\draw[fill=white,thick] (-2,-.3) rectangle (-.5,.3);

\node at (-1,.5) {$\cdot$};
\node at (-1.15,.55) {$\cdot$};
\node at (-1.3,.6) {$\cdot$};

\node at (-1.3,0) {$\nu$};

\node[rotate = 180] at (1.8,0) {\arrowlines};
\node[rotate = 180] at (1.6,0) {\arrowlines};
\node[rotate = 180] at (1,0) {\arrowlines};

\end{tikzpicture}
\end{center}

\omitt{We write
\vspace{3mm}
\begin{center}
\begin{tikzpicture}

\node at (-4,0) {$\alpha_\rho \quad := $};

\draw[thick] (0,0) circle (1.8 cm);
\draw[thick] (0,0) circle (1.6 cm);
\draw[thick] (0,0) circle (1 cm);

\draw[fill=white,thick] (-2,-.3) rectangle (-.5,.3);

\node at (-1,.5) {$\cdot$};
\node at (-1.15,.55) {$\cdot$};
\node at (-1.3,.6) {$\cdot$};

\node at (-1.3,0) {$\sigma_{\rho}$};

\node[rotate = 180] at (1.8,0) {\arrowlines};
\node[rotate = 180] at (1.6,0) {\arrowlines};
\node[rotate = 180] at (1,0) {\arrowlines};

\node at (2,-1.2) {.};

\end{tikzpicture}
\end{center}
\vspace{3mm}
where $\rho$ is a partition and $\sigma_{\rho}$ is a permutation with cycle type $\rho$.}
\vspace{3mm}
We set $\alpha_k := \alpha_{(k)}$. 

\begin{remark} \label{remark-closure-don't-care-about-conj-class-rep}
By an argument given in \cite{KLM16}, this notation is consistent with \eqref{picture-sym-group-emedding} in the sense that the closures of the diagrams $\symtoHom{n}(\omega_1)$ and $\symtoHom{n}(\omega_2)$, for $\omega_1, \omega_2 \in \Sy{n}$ and $\omega_1$ and $\omega_2$ having the same cycle type $\nu$, are both equal to $\alpha_\nu$.

\end{remark}

One can impose a grading on $\Endid$ by setting:
\begin{equation} \label{eqn-grading-def}
\deg(d_{0}) = 0 \quad\quad \text{and} \quad\quad \deg(d_{2k}) = 2k+1.
\end{equation}

\begin{lemma} \label{lemma-degree-of-alpha}
In terms of the grading defined by \eqref{eqn-grading-def}, 
\begin{equation*}
\alpha_{2k+1} = d_{2k} + \loworderterms 
\end{equation*}
\end{lemma}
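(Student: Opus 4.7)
The plan is to proceed by induction on $k$, reducing $\alpha_{2k+1}$ to $d_{2k}$ plus strictly lower-degree terms using the defining relations of $\Heis$. The base case $k=0$ is immediate: $\alpha_1$ is the closure of the identity permutation on a single strand, which by definition equals the counterclockwise bubble $d_0$. Since $\deg(d_0)=0$, there is no room for lower-order terms, so $\alpha_1 = d_0$ exactly.

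For the inductive step, I would represent $\alpha_{2k+1}$ as the closure of the $(2k+1)$-cycle $\pi_{2k+1} = s_{2k}s_{2k-1}\cdots s_1$ from \eqref{eqn-perm-decomp}, drawn as a planar diagram with $2k$ crossings between same-orientation strands inside the permutation box. The key maneuver is to iteratively ``peel off'' one crossing at a time: using the dot-sliding relations \eqref{dotSlide: bottomLeft}, \eqref{dotSlide: topLeft} together with the exact braid relations in \eqref{up down double crossings}, one transforms the diagram so that a crossing can be absorbed as either a curl (equivalent to a solid dot via the defining curl relation above \eqref{anticommute}) or as a correction term that locally reduces the number of strands via cap-cup resolution.

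At each peeling step, the correction terms consist of diagrams with additional cup-cap pairs (possibly decorated with hollow dots from \eqref{caps}, \eqref{d01=0}) or bubble factors of strictly fewer solid dots. These all have strictly lower degree: since $\deg(d_{2j}) = 2j+1$ for $j<k$, any factorization of the closure into a product of a bubble $d_{2j}$ and a ``dotted'' subdiagram gives a strictly smaller total degree than $2k+1$. After $2k$ iterations, the main term accumulates into a single counterclockwise loop decorated with $2k$ solid dots, which is by definition $d_{2k}$.

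The main obstacle will be verifying that the leading coefficient of $d_{2k}$ is exactly $1$ after all reductions, and that no top-degree terms arise unexpectedly from correction pathways. In particular, the bubble-sliding relation (Lemma \ref{lemma-bubble-sliding-relation}) introduces a factor of $(4n+2)$ when pulling a bubble through a strand; one must confirm that such factors contribute only to sub-leading terms. A secondary subtlety is correctly handling the orientation of curls that arise during peeling, since left-twist curls on upward strands vanish by \eqref{eqn-symmetric-group-relations} while right-twist curls equal a single dot, so one must ensure the peeling produces the correct orientation at each stage.
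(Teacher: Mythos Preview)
Your approach is essentially the same as the paper's: iteratively recognize the innermost crossing of $\alpha_{2k+1}$ as a right-twist curl (a solid dot), slide the dot outward via \eqref{dotSlide: bottomLeft}--\eqref{dotSlide: topLeft}, and repeat $2k$ times to reach $d_{2k}$ plus correction terms. One small slip: $d_0$ is the \emph{clockwise} bubble (the counterclockwise one is $\bar d_0=1$ by \eqref{eqn-symmetric-group-relations}); $\alpha_1$ is indeed clockwise, so $\alpha_1=d_0$ is correct, just not for the reason you stated.

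The obstacles you list at the end are exactly where your sketch is not yet a proof, and the paper resolves them with a single bookkeeping invariant rather than tracking coefficients like $(4n+2)$ directly. Observe that each application of a dot slide \eqref{dotSlide: bottomLeft}--\eqref{dotSlide: topLeft} or of the bubble slide in Lemma~\ref{lemma-bubble-sliding-relation} never increases the number of crossings, and can increase the number of connected components by at most one only at the cost of losing at least two crossings. Hence in every correction diagram one has
\[
\#\text{crossings}+\#\text{connected components}\le (2k-2j)+(j+1)=2k-j+1
\]
for some $j\ge 1$. When the diagram is fully reduced to a monomial in bubbles, this quantity bounds its degree, which is therefore strictly less than $2k+1=\deg(d_{2k})$. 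This simultaneously disposes of the leading-coefficient worry (only the main branch survives at top degree, and each dot slide contributes coefficient $1$ there) and the $(4n+2)$ worry (those terms already have two fewer crossings). Adding this invariant to your argument completes it.
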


\begin{proof}
We can reduce the diagram $\alpha_{2k+1}$ to a polynomial in $d_0, d_2, d_4, \dots$ via repeated application of the dot sliding moves \eqref{dotSlide: bottomLeft}-\eqref{dotSlide: topLeft} and clockwise bubble sliding move from Lemma \ref{lemma-bubble-sliding-relation}. Specifically, the innermost crossing in $\alpha_{2k+1}$ forms part of a right twist curl, i.e. a solid dot. Modulo terms with at least $2$ fewer crossings (all of which also have exactly two connected components), this dot can be moved to the outside of the diagram via repeated applications of relations \eqref{dotSlide: bottomLeft}-\eqref{dotSlide: topLeft}. The resulting diagram once again has a right twist curl in its center. Repeating this process $2k$ times yields $d_{2k}$, plus terms which have $2$ fewer crossings (including those in right twists curls written as solid dots) and exactly two connected components. 

It remains to reduce the lower order terms to polynomials in $d_0, d_2, d_4, \dots$. Note that the dot sliding moves \eqref{dotSlide: bottomLeft}-\eqref{dotSlide: topLeft} and bubble sliding relation in Lemma \ref{lemma-bubble-sliding-relation} never increase the number of crossings, and can only increase the number of connected components by at most 1 with a corresponding decrease in crossings by 2 (see the second and third term on the right side of \eqref{dotSlide: bottomLeft}-\eqref{dotSlide: topLeft}). It follows that while reducing each of these additional terms to monomials in bubbles, in each diagram
\begin{equation*}
\# \text{crossings} \;\; + \;\; \# \text{connected components} \leq (2k - 2j) + (j+1) = 2k -j +1
\end{equation*}
where $1 \leq j \leq k$ is half the number of crossings that have been resolved via a dot or bubble slide. Hence the degree of each resulting monomial in bubbles is strictly less than $2k+1$. Since $d_{2k}$ has degree $2k+1$, the result follows.
\end{proof}

\begin{corollary} \label{coro-alpha-properties}
$\Endid$ is generated by $\{\alpha_{2k+1}\}_{k \geq 0}$ and these elements are algebraically independent.
\end{corollary}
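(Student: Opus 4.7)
The plan is to deduce the corollary from Proposition \ref{prop-generating-set} and Lemma \ref{lemma-degree-of-alpha} via a standard triangular change-of-variables argument with respect to the grading defined in \eqref{eqn-grading-def}.

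First I would handle generation. Proposition \ref{prop-generating-set} tells us that $\{d_{2k}\}_{k \geq 0}$ generates $\Endid$, so it suffices to show that each $d_{2k}$ lies in the subalgebra generated by $\{\alpha_1, \alpha_3, \dots, \alpha_{2k+1}\}$. I would prove this by induction on $k$. For the base case, $\alpha_1$ is the closure of a single clockwise strand with no decoration, which is literally $d_0$. For the inductive step, Lemma \ref{lemma-degree-of-alpha} gives $\alpha_{2k+1} = d_{2k} + g_k$ where $g_k$ is a polynomial in bubbles of degree strictly less than $2k+1$. Since $\deg(d_{2j}) = 2j+1$ for $j \geq 1$ and $\deg(d_0) = 0$, every monomial in $g_k$ involves only the bubbles $d_0, d_2, \dots, d_{2k-2}$. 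By the inductive hypothesis, these lie in the subalgebra generated by $\alpha_1, \alpha_3, \dots, \alpha_{2k-1}$, so $d_{2k} = \alpha_{2k+1} - g_k$ lies in the subalgebra generated by $\alpha_1, \alpha_3, \dots, \alpha_{2k+1}$, completing the induction.

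For algebraic independence, suppose for contradiction that there is a nonzero polynomial $P$ in finitely many variables such that $P(\alpha_1, \alpha_3, \dots, \alpha_{2N+1}) = 0$ in $\Endid$. Impose on the polynomial ring $\MB{C}[x_0, x_1, \dots, x_N]$ the grading in which $x_k$ has weight $2k+1$ (matching the grading on $\Endid$), and let $P_{\mathrm{top}}$ be the homogeneous summand of $P$ of top weight. Expanding each $\alpha_{2k+1}$ as $d_{2k} + (\text{weight} < 2k+1)$ according to Lemma \ref{lemma-degree-of-alpha} and collecting the top-weight part of $P(\alpha_1, \alpha_3, \dots, \alpha_{2N+1})$ in the bubble generators yields $P_{\mathrm{top}}(d_0, d_2, \dots, d_{2N}) = 0$. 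Since $P_{\mathrm{top}}$ is a nonzero polynomial, this contradicts the algebraic independence of $\{d_{2k}\}_{k \geq 0}$ given by Proposition \ref{prop-generating-set}.

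There is no real obstacle here; the only subtlety is bookkeeping the grading, in particular noting that $d_0$ has weight $0$ while $d_{2k}$ has weight $2k+1$ for $k \geq 1$, so that Lemma \ref{lemma-degree-of-alpha} really does express the lower order terms using only bubbles $d_{2j}$ with $j < k$, making the induction go through.
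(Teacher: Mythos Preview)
Your argument is correct and is exactly the standard triangular change-of-variables deduction the paper intends (the paper states the corollary without proof, immediately after Lemma \ref{lemma-degree-of-alpha} and Proposition \ref{prop-generating-set}). One small slip: in the algebraic independence step you assign $x_0$ weight $2\cdot 0 + 1 = 1$, but to ``match the grading on $\Endid$'' you need $x_0$ to have weight $0$ since $\deg(d_0)=0$; with weight $1$ on $x_0$ the top-weight part of $P$ need not coincide with the top-degree part of $P(\alpha_1,\dots)$ in $\Endid$ (e.g.\ $P = x_0^3 + x_1$). You already flag this asymmetry in your final paragraph, so simply carry it through: with weights $0,3,5,\dots,2N+1$ on $x_0,x_1,\dots,x_N$ the argument goes through verbatim.
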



\subsection{Diagrams as bimodule homomorphisms} \label{subsect-diagrams-as-bimodule-homs}

An action of $\Heis$ on the category $\FockSpaceCat$ whose objects are compositions of induction and restriction functors between $\mathbb{Z}/2\mathbb{Z}$-graded finite dimensional $\Sern{n}$-supermodules, for all $n\geq0$, is described in \cite[Section 6.3]{CS15}. Because induction and restriction functors for the algebras $\Sern{n}$ can be written as
\begin{equation*}
\ind^{\Sern{n+1}}_{\Sern{n}} ( -) =\; _{\Sern{n+1}} {\Sern{n+1}} \otimes_{\Sern{n}} - \quad\quad \text{and} \quad\quad \res^{\Sern{n+1}}_{\Sern{n}} ( - ) =\; _{\Sern{n}}\Sern{n+1}\otimes_{\Sern{n+1}} -
\end{equation*}
(where above $_{\Sern{n+1}}\Sern{n+1}$ is a ($\Sern{n+1},\Sern{n})$-bimodule and $_{\Sern{n}}\Sern{n+1}$ is a $(\Sern{n},\Sern{n+1})$-bimodule) the objects of $\FockSpaceCat$ can alternatively be described as tensor products of certain $(\Sern{k_1},\Sern{k_2})$-bimodules for all $k_1,k_2 \geq 0$. We will use this interpretation extensively below. Let $k_1, k_2 \leq n$, then we write

\begin{itemize}
	\item $(n)$ for $\Sern{n}$ considered as a $(\Sern{n},\Sern{n})$-bimodule,
	\item $(n)_{k_2}$ for $\Sern{n}$ considered as a $(\Sern{n},\Sern{k_2})$-bimodule,
	\item $_{k_1}(n)$ for $\Sern{n}$ considered as a $(\Sern{k_1},\Sern{n})$-bimodule,
	\item $_{k_1}(n)_{k_2}$ for $\Sern{n}$ considered as a $(\Sern{k_1},\Sern{k_2})$-bimodule.
\end{itemize}

The morphisms in $\FockSpaceCat$ are certain natural transformations of these compositions of induction/restriction functors (or, equivalently, certain bimodule homomorphisms). 
To describe the morphisms of $\FockSpaceCat$, we use the diagrams of $\Heis$, but this time we also label the rightmost region with a non-negative integer $n$.  Since an upward strand denotes the identity endomorphism of induction, it increases the label by one as one reads from right to left. Similarly a downward strand decreases the label by one. 

\begin{center}
		\begin{tikzpicture}
			\draw[->,thick] (1,0) to (1,1);
			\node at (1.5,0.5) {$n$};
			\node at (0,0.5) {$n+1$};
		\end{tikzpicture}\hspace{3cm}
			\begin{tikzpicture}
		\draw[->,thick] (1,1) to (1,0);
		\node at (2,0.5) {$n+1$};
		\node at (0.5,0.5) {$n$};
		\end{tikzpicture}
\end{center}

 We also set any  diagram with a negative label to be zero. Descriptions of other morphisms in $\FockSpaceCat$ are most easily given in terms of bimodules, so we henceforth use this language exclusively.

Below are the generating morphisms and the corresponding bimodule maps: 
 \begin{equation}
\begin{tikzpicture}[baseline=(current bounding box).center, scale=0.75]
\node at (-4,1.6){$ (n+1)_n \rightarrow (n+1)_n$};
\node at (-3.7,.9){$g \mapsto (-1)^{|g|}gc_{n+1}$};
\draw[<->] (-.7,1.3)--(1.5,1.3);
\draw[->,thick] (4,.7) to (4,2);
\draw (4,1.35) circle [radius = 3pt];
\node at (4.7,1.5){$n$};
\node at (3.2,1.5){$n+1$};
\end{tikzpicture}
\end{equation}

 \begin{equation}
\begin{tikzpicture}[baseline=(current bounding box).center, scale=0.75]
\node at (-4,1.6){$_{n-1}(n) \rightarrow _{n-1}(n)$};
\node at (-3.7,.9){$g \mapsto c_{n+1}g$};
\draw[<->] (-.7,1.3)--(1.5,1.3);
\draw[<-,thick] (4,.7) to (4,2);
\draw (4,1.35) circle [radius = 3pt];
\node at (4.7,1.5){$n$};
\node at (3.2,1.5){$n-1$};
\end{tikzpicture}
\end{equation}

 \begin{equation} \label{eqn-left-cup}
\begin{tikzpicture}[baseline=(current bounding box).center, scale=0.75]
\node at (-4,1.6){$q_{n}: (n) \rightarrow (n)_{n-1}(n)$};
\draw[<->] (-.7,1.3)--(1.5,1.3);
\draw[<-,thick] (2.5,1.8) arc(-180:0:9mm); 
\node at (4.7,1.5){$n$};
\node at (3.4,1.6){$n-1$};
\end{tikzpicture}
\end{equation}

\begin{equation}
\begin{tikzpicture}[baseline=(current bounding box).center, scale=0.75]
\node at (-4,1.6){$\pr{n}: \;_n(n+1)_n\rightarrow (n)$};
\draw[<->] (-.7,1.3)--(1.5,1.3);
\draw[<-,thick] (2.5,1) arc(180:0:9mm); 
\node at (4.7,1.5){$n$};
\node at (3.4,1.2){$n+1$};
\end{tikzpicture}
\end{equation}

\begin{equation} \label{eqn-right-cap}
\begin{tikzpicture}[baseline=(current bounding box).center, scale=0.75]
\node at (-4,1.6){$ (n+1)_n(n+1) \rightarrow (n+1)$};
\node at (-3.7,.9){$g_1 \otimes g_2 \mapsto g_1g_2$};
\draw[<->] (-.7,1.3)--(1.5,1.3);
\draw[->,thick] (2.5,1) arc(180:0:9mm); 
\node at (5,1.5){$n+1$};
\node at (3.4,1.2){$n$};
\end{tikzpicture}
\end{equation}

\begin{equation}
\begin{tikzpicture}[baseline=(current bounding box).center, scale=0.75]
\node at (-4,1.6){$i_{n}: (n) \rightarrow _n(n+1)_n$};
\node at (-3.7,.9){$g\mapsto g$};
\draw[<->] (-.7,1.3)--(1.5,1.3);
\draw[->,thick] (2.5,1.8) arc(-180:0:9mm); 
\node at (4.7,1.5){$n$};
\node at (3.4,1.6){$n+1$};
\end{tikzpicture}
\end{equation}

\begin{equation} \label{eqn-crossing}
\begin{tikzpicture}[baseline=(current bounding box).center, scale=0.75]
\node at (-4,1.6){$ (n+2)_{n} \rightarrow (n+2)_{n}$};
\node at (-3.7,.9){$g\mapsto gs_{n+1}$};
\draw[<->] (-.7,1.3)--(1.5,1.3);
\draw[->,thick] (2.5,0)--(4,2);
\draw[->,thick] (4,0)--(2.5,2);
\node at (4,1){$n$};
\node at (3.3,2.25){$n+1$};
\node at (2.4,1){$n+2$};
\end{tikzpicture}
\end{equation}

where $\pr{n}: \;_n(n+1)_n\rightarrow (n)$ is the projection map given by $\pr{n}(g)=g$ if $g\in \Sern{n}$, $\pr{n}(g)=0$ if $g\not\in \Sern{n}$, and $q_{n}: (n) \rightarrow (n)_{n-1}(n)$ is the bimodule map determined by $\ds q_n(1)=\sum_{x \in \LcosSer{n}{n-1}} x \otimes x^{-1}$.

\begin{remark}
The action of $\Heis$ on $\FockSpaceCat$ can be lifted to the idempotent closures of these categories. This then becomes a categorification of the Fock space representation \cite{CS15}. 
\end{remark}

Following Khovanov's approach from \cite{Kho14}, let $\FockSpaceCat[n]$ be the full subcategory of $\FockSpaceCat$ whose objects are $(\Sern{k},\Sern{n})$-bimodules, for all $k\in \mathbb{Z}_{\geq0}$. For every $n \in \mathbb{Z}_{\geq0}$ there is a functor $\FockSpaceFunctor{n}:\Heis \rightarrow \FockSpaceCat[n]$ which is defined on objects of $\Heis$ such that it sends the rightmost $P$ (respectively $Q$) from a $P$,$Q$ sequence to $(n+1)_n$  (resp. $_{n-1}(n)$) and all other $P$ and $Q$ in the sequence are determined by this choice. Hence $\FockSpaceFunctor{n}$ maps all objects of $\Heis$ into $\FockSpaceCat[n]$.  Under $\FockSpaceFunctor{n}$, a morphism (or diagram) is mapped to a morphism in $\FockSpaceCat[n]$ by labeling the rightmost region by $n$ which determines the labelings of all other regions. An upward strand increases the label by $1$, and a downward strand decreases the label by $1$ as we read from right to left.

Note that the image of a closed diagram $D$ under $\FockSpaceFunctor{n}$ will be an $(\Sern{n},\Sern{n})$-bimodule endomorphim of $\Sern{n}$ which we denote as $f: \Sern{n} \rightarrow \Sern{n}$. $f$ is fully determined by the value $f(1)$ since for any $x \in \Sern{n}$, $f(x) = xf(1)$. Furthermore, $f(1)$ is an element of $\evencenter{n}$ because $xf(1) = f(x) = f(1)x$. In this way we can identify the image of $\Endid$ under $\FockSpaceFunctor{n}$ with elements of $\evencenter{n}$.

We next study the image of some of the elements of $\Endid$ from Section \ref{Section-H-Center} under  the functor $\FockSpaceFunctor{n}$.

\begin{lemma} \label{lemma-bimodule-diagrams1}
\begin{enumerate}

\item \label{lemma-bimodule-diagrams1-cups} The diagram
\vspace{2mm}

\begin{center}
\begin{tikzpicture}[scale = .6]

\draw[thick] (.9,-.5) arc (180:360:.6cm);
\draw[thick] (0,-.5) arc (180:360:1.5cm);
\draw[thick] (-.85,-.5) arc (180:360:2.4cm);

\node at (.9,-.5) {\arrowlines};
\node at (0,-.5) {\arrowlines};
\node at (-.85,-.5) {\arrowlines};

\node at (1.5,-.7) {\scriptsize{$n$-$k$}};
\node at (-.3,-1) {\scriptsize{$n$-$1$}};
\node at (-1.1,-1.2) {\scriptsize{$n$}};

\draw[thick,black] (.7,-.8) circle (.2mm);
\draw[thick,black] (.55,-.85) circle (.2mm);
\draw[thick,black] (.4,-.9) circle (.2mm);

\end{tikzpicture}
\end{center}
\vspace{2mm}
corresponds to the $(\Sern{n},\Sern{n})$-bimodule homomorphism $(n) \rightarrow (n)_{n-k}(n)$ which sends
\begin{equation*}
1 \;\; \mapsto \sum_{x \in \LcosSer{n}{n-k}} x \otimes x^{-1}.
\end{equation*}

\item \label{lemma-bimodule-diagrams1-perm} For $\mu \in \oddpartitions[k]$ with $k \leq n$, the diagram
\vspace{2mm}
\begin{center}
\begin{tikzpicture}[scale = .8]

\draw[thick] (2.3,-1)--(2.3,1);
\draw[thick] (1.4,-1)--(1.4,1);

\draw[fill=white,thick] (2.6,-.3) rectangle (1.1,.3);

\node at (2.3,.6) {\arrowlines};
\node at (1.4,.6) {\arrowlines};

\draw[thick,black] (2,.7) circle (.2mm);
\draw[thick,black] (1.8,.7) circle (.2mm);
\draw[thick,black] (1.6,.7) circle (.2mm);

\draw[thick,black] (4.8,.7) circle (.2mm);
\draw[thick,black] (5,.7) circle (.2mm);
\draw[thick,black] (5.2,.7) circle (.2mm);

\draw[thick,black] (3.2,0) circle (.2mm);
\draw[thick,black] (3.4,0) circle (.2mm);
\draw[thick,black] (3.6,0) circle (.2mm);

\draw[thick] (4.5,-1)--(4.5,1);
\draw[thick] (5.4,-1)--(5.4,1);

\node[rotate = 180] at (4.5,.6) {\arrowlines};
\node[rotate = 180] at (5.4,.6) {\arrowlines};

\node at (1.85,0) {$\mu$};

\node at (3.4,.5) {\scriptsize{$n$-$k$}};
\node at (0,.3) {\scriptsize{$n$}};
\node at (6.8,.3) {\scriptsize{$n$}};

\end{tikzpicture}
\end{center}
\vspace{2mm}
corresponds to the $(\Sern{n},\Sern{n})$-bimodule homomorphism $(n)_{n-k}(n) \rightarrow (n)_{n-k}(n)$ which for $x,y \in \Sern{n}$ sends
\begin{equation*}
x \otimes y \;\; \mapsto \;\; x\distinguishedperm{\mu}{n} \otimes y.
\end{equation*}

\end{enumerate}
\end{lemma}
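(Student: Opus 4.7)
For part (1), I proceed by induction on $k$. The base case $k = 1$ is immediate from the defining formula for $q_n$ in \eqref{eqn-left-cup}. For the inductive step, factor the diagram with $k$ nested cups as the diagram with $k-1$ nested cups together with an innermost cup, which corresponds to $q_{n-k+1}$ applied to the central tensor factor (since before its insertion the innermost region is labeled $n-k+1$). By the inductive hypothesis, the $(k-1)$-cup diagram sends $1 \mapsto \sum_{x \in \LcosSer{n}{n-k+1}} x \otimes x^{-1}$. Inserting $q_{n-k+1}(1) = \sum_{y \in \LcosSer{n-k+1}{n-k}} y \otimes y^{-1}$ into the central $\Sern{n-k+1}$-slot and collapsing the resulting fourfold tensor via the bimodule structure yields $\sum_{x,y} xy \otimes (xy)^{-1}$. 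Since a transversal of $\Sern{n-k}$ in $\Sern{n}$ assembles from successive transversals (this is precisely the inductive construction of $\LcosHyper{n}{n-k}$ given right after Lemma \ref{lemma-cosets}), the products $xy$ range exactly over $\LcosSer{n}{n-k}$, completing the induction.

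For part (2), the plan is to first identify the $\mu$-box as $\symtoHom{k}(\pi_\mu)$ acting on $P^k$, using \eqref{tikzpicture-def-for-lambda} and the decomposition of $\pi_\mu$ from \eqref{eqn-perm-decomp}, and then to compute its image under $\FockSpaceFunctor{n-k}$. By \eqref{eqn-crossing}, each adjacent transposition appearing in $\pi_\mu$ contributes right multiplication by an adjacent transposition in $\Sern{n}$; tracking the region labels (since the rightmost region of $P^k$ carries label $n-k$, the region between the $i$-th and $(i+1)$-st strand from the left carries label $n-i$) shows that $s_i \in \Sy{k}$ contributes right multiplication by $s_{n-i} \in \Sern{n}$. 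Composing these in the order dictated by the antihomomorphism $\symtoHom{k} : \Sern{k}^{\mathrm{opp}} \to \Hom_{\Heis}(P^k)$, and using $R_{a} \circ R_{b} = R_{ba}$, assembles them into right multiplication by $\tau_0 \pi_\mu \tau_0^{-1} = \distinguishedperm{\mu}{n}$, with conjugation by the longest element $\tau_0$ realizing exactly the automorphism $s_i \mapsto s_{n-i}$.

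Combining with the $k$ downward strands on the right (which act as the identity bimodule map on the right tensor factor ${_{n-k}\Sern{n}_n}$), the full diagram represents the $(\Sern{n},\Sern{n})$-bimodule endomorphism of $(n)_{n-k}(n) = {_n\Sern{n}_{n-k}} \otimes_{\Sern{n-k}} {_{n-k}\Sern{n}_n}$ given by right multiplication by $\distinguishedperm{\mu}{n}$ on the left factor, i.e.\ $x \otimes y \mapsto x\distinguishedperm{\mu}{n} \otimes y$. This is well defined on the tensor product over $\Sern{n-k}$ because $\distinguishedperm{\mu}{n}$ fixes $1, \ldots, n-k$ and hence centralizes $\Sern{n-k} \subset \Sern{n}$. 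The main obstacle will be the index bookkeeping in part (2): verifying that the geometric position of each crossing in $P^k$ translates to the correct $s_j$ in $\Sern{n}$, and that the resulting product assembles to exactly $\distinguishedperm{\mu}{n}$ rather than $\pi_\mu$ or some other conjugate. Part (1), by contrast, is essentially forced by the definitions once the innermost cup is correctly identified with $q_{n-k+1}$.
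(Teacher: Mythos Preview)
Your proof is correct and follows the same approach as the paper's own (very terse) proof, which simply says both parts ``follow from calculations using the definitions of cup \eqref{eqn-left-cup} and crossing \eqref{eqn-crossing} maps.'' You have filled in the details the paper omits: in part (1) the inductive assembly of coset representatives via the iterated construction of $\LcosHyper{n}{n-k}$, and in part (2) the crucial index bookkeeping showing that a crossing of the $i$-th and $(i+1)$-st upward strands (counted from the left) carries region label $n-i-1$ on its right and hence contributes right multiplication by $s_{n-i}$, so that the antihomomorphism $\symtoHom{k}$ together with $R_a\circ R_b = R_{ba}$ assembles the image of $\pi_\mu$ into right multiplication by $\tau_0\pi_\mu\tau_0^{-1}=\distinguishedperm{\mu}{n}$.
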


\begin{proof}
Both \ref{lemma-bimodule-diagrams1-cups} and \ref{lemma-bimodule-diagrams1-perm} follow from calculations using the definitions of cup \eqref{eqn-left-cup} and crossing \eqref{eqn-crossing} maps.
\end{proof}

\begin{proposition} \label{prop-perm-diagrams-mapping}
For $\mu \in \oddpartitions[k]$,
\begin{equation*}
\FockSpaceFunctor{n}(\alpha_{\mu}) = \begin{cases}
\Sergeevclasssum{\mu}{n} & \text{if $k \leq n$}\\
0 & \text{otherwise}.
\end{cases}
\end{equation*}
\end{proposition}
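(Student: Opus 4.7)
The proof naturally splits into two cases according to whether $k\le n$ or $k>n$. I will handle the vanishing case first because it is immediate from the region-labeling convention. The diagram $\alpha_\mu$ is (up to the permutation box in its center) a nest of $k$ concentric closures. Under $\FockSpaceFunctor{n}$ the outermost region is labeled $n$, and crossing each nested closure (reading inward) decreases the label by one. Hence if $k>n$, the innermost region would receive the label $n-k<0$, forcing the entire diagram to zero by the convention established when $\FockSpaceCat$ was introduced. This gives the second clause.

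For the case $k\le n$, the plan is to cut $\alpha_\mu$ horizontally into three slabs and identify each with a bimodule map already computed in Lemma~\ref{lemma-bimodule-diagrams1}. Concretely, read from bottom to top:
\begin{enumerate}
\item the bottom slab consists of $k$ nested cups, which by Lemma~\ref{lemma-bimodule-diagrams1}.\ref{lemma-bimodule-diagrams1-cups} realizes the bimodule map $(n)\to(n)_{n-k}(n)$ sending $1\mapsto \sum_{x\in\LcosSer{n}{n-k}} x\otimes x^{-1}$;
\item the middle slab is the permutation box labeled $\mu$ with $k$ strands, which by Lemma~\ref{lemma-bimodule-diagrams1}.\ref{lemma-bimodule-diagrams1-perm} realizes $(n)_{n-k}(n)\to(n)_{n-k}(n)$, $x\otimes y\mapsto x\distinguishedperm{\mu}{n}\otimes y$;
\item the top slab consists of $k$ nested caps, which by iteration of \eqref{eqn-right-cap} realizes the multiplication map $(n)_{n-k}(n)\to(n)$, $x\otimes y\mapsto xy$.
\end{enumerate}

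Since $\FockSpaceFunctor{n}(\alpha_\mu)$ is a $(\Sern{n},\Sern{n})$-bimodule endomorphism of $(n)=\Sern{n}$, it is determined by its value at $1$. Composing the three maps above yields
\begin{equation*}
1 \;\longmapsto\; \sum_{x\in\LcosSer{n}{n-k}} x\otimes x^{-1} \;\longmapsto\; \sum_{x} x\distinguishedperm{\mu}{n}\otimes x^{-1} \;\longmapsto\; \sum_{x\in\LcosSer{n}{n-k}} x\,\distinguishedperm{\mu}{n}\,x^{-1}.
\end{equation*}
To finish it remains to recognize this last sum as $\Sergeevclasssum{\mu}{n}$. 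This follows from Definition~\ref{def-class-sums} together with the fact that $\hyperSergeevProj{n}$ is an algebra homomorphism, $\LcosSer{n}{n-k}=\hyperSergeevProj{n}(\LcosHyper{n}{n-k})$, and $\hyperSergeevProj{n}(g^{-1})=\hyperSergeevProj{n}(g)^{-1}$ for any $g\in\hyperoctahedral{n}$; since $\distinguishedperm{\mu}{n}$ lies in $\Sy{n}\subset\hyperoctahedral{n}$ and is identified with its own image, one computes
\begin{equation*}
\Sergeevclasssum{\mu}{n} = \hyperSergeevProj{n}\!\Big(\sum_{g\in\LcosHyper{n}{n-k}} g\distinguishedperm{\mu}{n} g^{-1}\Big) = \sum_{x\in\LcosSer{n}{n-k}} x\distinguishedperm{\mu}{n}x^{-1}.
\end{equation*}

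The main obstacle is essentially bookkeeping: making sure that the horizontal slicing of $\alpha_\mu$ actually respects the bimodule structures asserted in Lemma~\ref{lemma-bimodule-diagrams1} (in particular that the tensor product in the middle slab is indeed over $\Sern{n-k}$, matching the innermost region label $n-k$ produced by the $k$ nested cups). Once the slicing is justified, everything else is a direct application of lemmas already proved.
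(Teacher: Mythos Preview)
Your proof is correct and follows essentially the same approach as the paper: both slice $\alpha_\mu$ horizontally into three slabs (nested cups, permutation box, nested caps), invoke Lemma~\ref{lemma-bimodule-diagrams1} for the bottom two, and compose with the multiplication map to obtain $\sum_{x\in\LcosSer{n}{n-k}} x\,\distinguishedperm{\mu}{n}\,x^{-1}=\Sergeevclasssum{\mu}{n}$. Your version is in fact slightly more complete, since you explicitly dispatch the case $k>n$ via the negative-region-label convention (the paper's proof leaves this implicit) and spell out the identification of the final sum with $\Sergeevclasssum{\mu}{n}$ through $\hyperSergeevProj{n}$.
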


\begin{proof}
The diagram for $\alpha_\mu$ can be broken into three components
\begin{center}
\begin{tikzpicture}[scale = .8]

\node at (-4,0) {$\alpha_\mu \quad := $};

\draw[thick] (0,0) circle (1.8 cm);
\draw[thick] (0,0) circle (1.6 cm);
\draw[thick] (0,0) circle (1 cm);

\draw[fill=white,thick] (-2,-.3) rectangle (-.5,.3);

\node at (1.4,0) {$\cdot$};
\node at (1.3,0) {$\cdot$};
\node at (1.2,0) {$\cdot$};

\node at (-1.3,0) {$\mu$};

\node[rotate = 180] at (1.8,0) {\arrowlines};
\node[rotate = 180] at (1.6,0) {\arrowlines};
\node[rotate = 180] at (1,0) {\arrowlines};

\node at (2,-1.2) {.};

\draw[red,thick,dashed] (-2.8,.5)--(2.8,.5);
\draw[red,thick,dashed] (-2.8,-.5)--(2.8,-.5);

\end{tikzpicture}
\end{center}
Reading from bottom to top, the first component corresponds to Lemma \ref{lemma-bimodule-diagrams1}.\ref{lemma-bimodule-diagrams1-cups}, and the second corresponds to Lemma \ref{lemma-bimodule-diagrams1}.\ref{lemma-bimodule-diagrams1-perm} The composition of these two maps sends
\begin{equation*}
1 \;\; \mapsto \;\; \sum_{x \in \LcosSer{n}{n-k}} x\distinguishedperm{\mu}{n} \otimes x^{-1}.
\end{equation*}
The top component of $k$ nested caps is the multiplication map which sends
\begin{equation*}
\sum_{x \in \LcosSer{n}{n-k}} x\distinguishedperm{\mu}{n} \otimes x^{-1} \;\; \mapsto \;\; \sum_{x \in \LcosSer{n}{n-k}} x\distinguishedperm{\mu}{n}x^{-1} = \Sergeevclasssum{\mu}{n}. \qedhere
\end{equation*} 
\end{proof}

\begin{lemma} \cite{OR17} \label{lemma-curl-is-JM}
For $n -1 \geq 0$, the right twist curl 
\vspace{2mm}
\begin{center}
    	\begin{tikzpicture}[scale=.7]
    		\draw[->,thick] (0,0) to (0,1);
    		\draw[->,thick] (0,1) to (1,2);
    		\draw[->,thick] (1,1) to (0,2);
    		\draw[<-,thick] (2,1) to (2,2);
    		\draw[->,thick] (2,1) arc(0:-180:0.5);
    		\draw[->,thick] (0,2) to (0,3);
    		\draw[->,thick] (1,2) arc(180:0:0.5);
    		\draw[dashed] (-1,0) to (3,0);
    		\draw[dashed] (-1,1) to (3,1);
    		\draw[dashed] (-1,2) to (3,2);
    		\draw[dashed] (-1,3) to (3,3);
    		\node at (2.7,1.5) {$n$-$1$};
    	\end{tikzpicture}
\end{center}
corresponds to the $(\Sern{n},\Sern{n-1})$-bimodule homomorphism, $(n)_{n-1} \rightarrow (n)_{n-1}$ which multiplies $x \in \Sern{n}$ on the right by the Jucys-Murphy element $\JM{n}$
\begin{equation*}
x \;\; \mapsto \;\; x\JM{n}.
\end{equation*}
\end{lemma}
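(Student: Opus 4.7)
My plan is to slice the right twist curl horizontally into three layers, identify each with an elementary bimodule morphism from Section~\ref{subsect-diagrams-as-bimodule-homs}, and then compose them. Reading the picture from bottom to top, the bottom arc is a cup with outer region $n-1$ and inner region $n-2$; by matching arrow orientations and region labels against the list of elementary maps, this is $q_{n-1}\colon (n-1)\to (n-1)_{n-2}(n-1)$ tensored on the right of $(n)_{n-1}$, and on the element $x\in(n)_{n-1}$ it yields $\sum_{y} x\otimes y\otimes y^{-1}\in (n)_{n-1}(n-1)_{n-2}(n-1)$, where the sum runs over $y\in\LcosSer{n-1}{n-2}$. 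The middle crossing is a $PP$-crossing of the main strand with the left strand of the cup; via the natural identification $(n)_{n-1}(n-1)_{n-2}\cong (n)_{n-2}$, it acts as right multiplication by $s_{n-1}$ on the first two factors, sending our element to $\sum_{y} x\otimes y s_{n-1}\otimes y^{-1}$. The top arc is a cap with inner label $n-2$ and outer $n-1$, so by the formula in Section~\ref{subsect-diagrams-as-bimodule-homs} it is the multiplication map $a\otimes b\mapsto ab$ on the last two factors. Composing, I obtain the $(\Sern{n},\Sern{n-1})$-bimodule map $x\mapsto x\cdot \sum_{y\in\LcosSer{n-1}{n-2}} y s_{n-1} y^{-1}$.

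The remaining work is to identify $\sum_{y} y s_{n-1} y^{-1}$ with $\JM{n}$. I would use the explicit coset representatives $y = s_i\cdots s_{n-2} c_{n-1}^{\epsilon}$ (for $1\le i\le n-1$ and $\epsilon\in\{0,1\}$) obtained by iterating Lemma~\ref{lemma-cosets}, together with the inversion formula $y^{-1}=(-1)^{|y|}c_{n-1}^{\epsilon}s_{n-2}\cdots s_i$ from the Remark following Lemma~\ref{lemma-cosets}. For $\epsilon=0$, telescoping of Coxeter relations gives $y s_{n-1} y^{-1} = (i,n)$. For $\epsilon=1$, the key identity $c_{n-1} s_{n-1} c_{n-1} = s_{n-1} c_n c_{n-1}$ (which follows from $s_j c_j = c_{j+1} s_j$ and $s_j c_{j+1} = c_j s_j$) and the shuttling of $c_n c_{n-1}$ past $s_{n-2}\cdots s_i$ (under which $c_{n-1}$ migrates down to $c_i$ while $c_n$ is a spectator) produces a term of the form $-(i,n) c_n c_i$. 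Combining the two contributions at each fixed $i$ assembles into an element proportional to the $\JM{n}$-summand $(1+c_i c_n)(i,n)$; summing over $i=1,\ldots,n-1$ recovers $\JM{n}$.

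The main obstacle will be the sign bookkeeping in the second step: the sign $(-1)^{|y|}$ in the inversion formula, the anticommutativity $c_i c_j = -c_j c_i$, and the noncommuting of the transpositions $(i,n)$ with the Clifford generators conspire to produce several opportunities for a sign error, and each must land precisely so that the Clifford contributions assemble into the correct $\JM{n}$-summands rather than cancelling or appearing with the wrong sign.
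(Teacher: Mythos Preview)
The paper does not give its own proof of this lemma; it is cited from \cite{OR17}. So there is nothing in the paper to compare your argument against, and your three-layer decomposition (cup $q_{n-1}$, then the $PP$-crossing acting as right multiplication by $s_{n-1}$ on $(n)_{n-2}$, then the multiplication cap) is exactly the standard computation one would carry out. Your identification of the region labels (outer $n-1$, inner $n-2$) and of each elementary map is correct, and the composite is indeed $x\mapsto x\cdot\sum_{y\in\LcosSer{n-1}{n-2}} y\,s_{n-1}\,y^{-1}$.

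Your concern about signs is not just prudence: with the conventions literally stated in this paper the computation does \emph{not} land on $\JM{n}$. Taking $y=s_i\cdots s_{n-2}c_{n-1}^{\epsilon}$ and $y^{-1}=(-1)^{\epsilon}c_{n-1}^{\epsilon}s_{n-2}\cdots s_i$, the $\epsilon=0$ contribution is $(i,n)$, while the $\epsilon=1$ contribution is $-(s_i\cdots s_{n-2})c_{n-1}s_{n-1}c_{n-1}(s_{n-2}\cdots s_i)=-c_ic_n(i,n)$. Summing gives $\sum_{i=1}^{n-1}(1-c_ic_n)(i,n)$, which for $n=2$ is $(1-c_1c_2)s_1\neq\JM{2}=(1+c_1c_2)s_1$. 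The discrepancy traces to a convention mismatch: the identity $\JM{i}c_i=-c_i\JM{i}$ used in the proof of Proposition~\ref{proposition-algebraic-interp-moments}, and the lemma itself as stated in \cite{OR17}, hold under the Clifford convention $c_i^2=+1$; under the convention $c_i^2=-1$ adopted here (via $c_i=\sqrt{-1}\,c_i'$, which flips the sign of $c_ic_n$) one gets the ``wrong'' sign. This does not affect the paper's applications, since only even powers $\JM{n}^{2k}$ are used in Proposition~\ref{prop-curl-generators-bimodules}, but you should be aware that you will not be able to make the signs close up without first reconciling the two Clifford conventions.
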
 

\begin{proposition} \label{prop-curl-generators-bimodules}
Let $k \geq 0$ and $n \geq 1$, then
\begin{enumerate}
\item $ \displaystyle \FockSpaceFunctor{n}(\bar{d}_{2k}) = \pr{n}(\JM{n+1}^{2k}), $
\item $\displaystyle \FockSpaceFunctor{n}(d_{2k}) = \sum_{x \in \LcosSer{n}{n-1}} x\JM{n}^{2k}x^{-1}.$
\end{enumerate}
\end{proposition}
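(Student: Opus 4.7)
Decompose each bubble as a vertical composition $\text{cap} \circ (\text{dotted strand}) \circ \text{cup}$, identify each factor with the appropriate generating bimodule map from Section~\ref{subsect-diagrams-as-bimodule-homs}, and trace $1 \in (n)$ through the composition.

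For (1), the counterclockwise bubble $\bar{d}_{2k}$ has outer region labelled $n$, inner region $n+1$, left strand oriented downward (a $Q$), and right strand upward (a $P$). It therefore factors as $\pr{n} \circ (\text{$2k$ dots on the right $P$-strand}) \circ i_n$, where $i_n : (n) \to {}_n(n+1)_n$ is the cup $g \mapsto g$ and $\pr{n} : {}_n(n+1)_n \to (n)$ is the projection cap. Since the dots lie on an upward strand, Lemma~\ref{lemma-curl-is-JM} identifies the middle piece with right multiplication by $\JM{n+1}^{2k}$. Chasing $1$ through gives $1 \mapsto 1 \mapsto \JM{n+1}^{2k} \mapsto \pr{n}(\JM{n+1}^{2k})$, proving (1).

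For (2), the clockwise bubble $d_{2k}$ has outer label $n$, inner label $n-1$, left strand $P$, and right strand $Q$; it factors as $(\text{multiplication cap}) \circ (\text{dots}) \circ q_n$, where $q_n : (n) \to (n)_{n-1}(n)$ is the cup of Lemma~\ref{lemma-bimodule-diagrams1}.\ref{lemma-bimodule-diagrams1-cups} (case $k=1$), sending $1 \mapsto \sum_{x \in \LcosSer{n}{n-1}} x \otimes x^{-1}$, and the cap sends $g_1 \otimes g_2 \mapsto g_1 g_2$. Sliding the $2k$ dots along the loop onto the upward left $P$-strand, Lemma~\ref{lemma-curl-is-JM} gives that the middle piece is $g_1 \otimes g_2 \mapsto g_1 \JM{n}^{2k} \otimes g_2$. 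Composing, $1 \mapsto \sum_x x \otimes x^{-1} \mapsto \sum_x x \JM{n}^{2k} \otimes x^{-1} \mapsto \sum_x x \JM{n}^{2k} x^{-1}$, as claimed.

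The main technical subtlety is the sliding of dots around the $d_{2k}$ loop. The dot-sliding relations \eqref{dotSlide: bottomLeft}--\eqref{dotSlide: topLeft} only produce corrections at crossings, and the $d_{2k}$ bubble has no internal crossings, so the relocation is permitted by the planar isotopy built into the diagrammatic calculus. Alternatively, one may bypass the relocation by interpreting a dot on the right $Q$-strand directly as left multiplication by $\JM{n}$ on ${}_{n-1}(n)$ (by duality with Lemma~\ref{lemma-curl-is-JM}); this yields $g_1 \otimes g_2 \mapsto g_1 \otimes \JM{n}^{2k} g_2$, and composition with the multiplication cap produces the same $\sum_x x \JM{n}^{2k} x^{-1}$. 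A consistency check with $k=0$ recovers $\FockSpaceFunctor{n}(d_0) = |\LcosSer{n}{n-1}| = 2n$, matching Proposition~\ref{prop-perm-diagrams-mapping} applied to $\alpha_{(1)}$ (whose closure agrees with $d_0$).
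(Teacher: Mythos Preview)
Your proof is correct and follows essentially the same approach as the paper's: decompose each bubble into cup, dotted segment, and cap, identify the dotted segment with Jucys--Murphy multiplication via Lemma~\ref{lemma-curl-is-JM}, and compose the bimodule maps. The paper's own proof is a one-line reference to exactly these ingredients; you have simply spelled out the computation in more detail, including the dot-sliding justification that the paper leaves implicit.
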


\begin{proof}
These follow from direct calculation using the definitions of the cup \eqref{eqn-left-cup} and cap \eqref{eqn-right-cap} maps and Lemma \ref{lemma-curl-is-JM}.
\end{proof}

\section{Main results}\label{Section-main}

\subsection{An isomorphism between $\Endid$ and $\supersym$} 
In this subsection we establish an isomorphism between $\Endid$ and $\supersym$. The key step in the construction of this map will be identifying the elements of $\Endid$ with functions on $\strictpartitions$, i.e. as elements of $\funonYD$. To do this, let $\lambda \in \strictpartitions[n]$ and $x \in \Endid$. Then we evaluate $x$ on $\lambda$ by
\begin{equation*}
x(\lambda) := \normSerChar{\lambda}(\FockSpaceFunctor{n}(x)).
\end{equation*}
Because $\FockSpaceFunctor{n}$ is a homomorphism on $\Endid$ which maps into $\evencenter{n}$ and $\normSerChar{\lambda}$ is a homomorphism when restricted to $\evencenter{n}$, this defines a homomorphism into $\funonYD$. 

\begin{proposition} \label{prop-Fock-value-perm-cycle}
For $\mu \in \oddpartitions[k]$ and $\lambda \in \strictpartitions[n]$ we have 
\begin{equation*}
\alpha_{\mu}(\lambda) = \begin{cases}
2^kn^{\downarrow k}\frac{\SerChar{\lambda}(\mu \cup 1^{n-k})}{\SerChar{\lambda}(1^n)}   & \text{if $k \leq n$}\\
0 & \text{otherwise}
\end{cases}
\end{equation*}
\end{proposition}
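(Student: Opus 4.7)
The plan is to combine the two main facts that have already been established about $\alpha_\mu$ under the Fock space functor and about normalized characters on the scaled conjugacy class sums.

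First, I would unpack the definition: by construction, $\alpha_\mu(\lambda) = \normSerChar{\lambda}(\FockSpaceFunctor{n}(\alpha_\mu))$. The case $k > n$ is immediate, since Proposition~\ref{prop-perm-diagrams-mapping} tells us that $\FockSpaceFunctor{n}(\alpha_\mu) = 0$ whenever $|\mu| > n$ (intuitively, the inner nested cups of $\alpha_\mu$ would require a region with negative label, forcing the image to vanish), and hence $\normSerChar{\lambda}$ applied to it is $0$.

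For the case $k \leq n$, I would apply Proposition~\ref{prop-perm-diagrams-mapping} to identify $\FockSpaceFunctor{n}(\alpha_\mu) = \Sergeevclasssum{\mu}{n} \in \evencenter{n}$. Since $\normSerChar{\lambda}$ is a well-defined homomorphism on $\evencenter{n}$, I can then apply Proposition~\ref{prop-character-classsum} directly to obtain
\begin{equation*}
\normSerChar{\lambda}(\Sergeevclasssum{\mu}{n}) = 2^{k} n^{\downarrow k}\frac{\SerChar{\lambda}(\distinguishedperm{\mu}{n})}{\SerChar{\lambda}(1)}.
\end{equation*}
Finally, I invoke the convention introduced after Proposition~\ref{prop-char-determined-by-odd-partitions} that $\SerChar{\lambda}(\mu \cup 1^{n-k}) := \SerChar{\lambda}(\distinguishedperm{\mu}{n})$, which yields exactly the stated formula.

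There is essentially no obstacle here: the proposition is a direct consequence of the two preceding results, and the proof amounts to chaining them together through the definition of evaluation on strict partitions. The only minor subtlety worth noting explicitly is the vanishing statement when $k > n$, which deserves a brief justification via the labeling convention for the Fock space functor (negative labels giving zero morphisms).
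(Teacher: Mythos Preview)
Your proposal is correct and follows exactly the same approach as the paper, which simply states that the result follows from Proposition~\ref{prop-character-classsum} and Proposition~\ref{prop-perm-diagrams-mapping}. You have merely spelled out the chain of implications (definition of $\alpha_\mu(\lambda)$, application of each proposition, and the notational convention $\SerChar{\lambda}(\mu \cup 1^{n-k}) = \SerChar{\lambda}(\distinguishedperm{\mu}{n})$) in more detail than the paper does.
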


\begin{proof}
This follows from Proposition \ref{prop-character-classsum} and Proposition \ref{prop-perm-diagrams-mapping}.
\end{proof}

\begin{theorem} \label{thm-main}
There is an algebra isomorphism $\primaryisom: \Endid \rightarrow \supersym$ which for any $\mu \in \oddpartitions$, sends
\begin{equation*}
\alpha_\mu \mapsto 2^{\length{\mu}} \shiftedpowersum_\mu.
\end{equation*}
\end{theorem}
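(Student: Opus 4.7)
\medskip

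\noindent\textbf{Proof proposal.} The plan is to realize both $\Endid$ and $\supersym$ concretely as subalgebras of $\funonYD$ and observe that the two realizations coincide, with $\alpha_\mu$ and $2^{\length{\mu}}\shiftedpowersum_\mu$ giving the same function on strict partitions.

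First, I would define a $\MB{C}$-algebra homomorphism $\Phi: \Endid \rightarrow \funonYD$ by the rule
\begin{equation*}
\Phi(x)(\lambda) := \normSerChar{\lambda}\bigl(\FockSpaceFunctor{|\lambda|}(x)\bigr).
\end{equation*}
This is multiplicative because $\FockSpaceFunctor{n}\vert_{\Endid}$ lands in $\evencenter{n}$ (so its image is a commutative subalgebra of $\Sern{n}$), and $\normSerChar{\lambda}$ is an algebra homomorphism when restricted to $\evencenter{n}$.

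Next, I would compute $\Phi(\alpha_\mu)$ for $\mu \in \oddpartitions[k]$. By Proposition~\ref{prop-Fock-value-perm-cycle}, for $\lambda \in \strictpartitions[n]$ we have
\begin{equation*}
\Phi(\alpha_\mu)(\lambda) = \begin{cases} 2^k n^{\downarrow k}\dfrac{\SerChar{\lambda}(\mu \cup 1^{n-k})}{\SerChar{\lambda}(1^n)} & k \leq n \\ 0 & k > n. \end{cases}
\end{equation*}
Comparing with Corollary~\ref{cor-value-of-shiftedpower}, which gives exactly
$\shiftedpowersum_\mu(\lambda) = 2^{k-\length{\mu}} n^{\downarrow k}\SerChar{\lambda}(\mu \cup 1^{n-k})/\SerChar{\lambda}(1^n)$ when $|\lambda| \geq |\mu|$ and $0$ otherwise, we obtain the identity of functions on $\strictpartitions$:
\begin{equation*}
\Phi(\alpha_\mu) = 2^{\length{\mu}}\,\shiftedpowersum_\mu.
\end{equation*}

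Now we invoke the algebraic independence of the two natural sets of generators. By Corollary~\ref{coro-alpha-properties}, the elements $\{\alpha_{2k+1}\}_{k \geq 0}$ are algebraically independent generators of $\Endid$, and by Corollary~\ref{coro-pfrak-is-gen/basis}, the elements $\{\shiftedpowersum_{2k+1}\}_{k \geq 0}$ are algebraically independent generators of $\supersym$. Combined with Proposition~\ref{prop-supersym-embed} (which identifies $\supersym$ with its image in $\funonYD$), the functions $\{2\shiftedpowersum_{2k+1}\}_{k \geq 0}$ are algebraically independent in $\funonYD$. Therefore $\Phi$ is injective, and its image is precisely the subalgebra of $\funonYD$ generated by $\{2\shiftedpowersum_{2k+1}\}_{k \geq 0}$, which is $\supersym$ itself. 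Hence $\Phi$ corestricts to an algebra isomorphism $\primaryisom: \Endid \overset{\sim}{\longrightarrow} \supersym$ sending $\alpha_\mu \mapsto 2^{\length{\mu}}\shiftedpowersum_\mu$, as desired.

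The entire argument is essentially a bookkeeping exercise once Propositions~\ref{prop-Fock-value-perm-cycle} and \ref{prop-perm-diagrams-mapping} and Corollary~\ref{cor-value-of-shiftedpower} are in hand; the main conceptual step was establishing that $\{\alpha_{2k+1}\}$ generate $\Endid$ algebraically independently (done in Corollary~\ref{coro-alpha-properties} via the degree-filtration argument from Lemma~\ref{lemma-degree-of-alpha}), so there is no serious obstacle remaining here. The only subtlety is to verify that the function-level identity $\Phi(\alpha_\mu) = 2^{\length{\mu}}\shiftedpowersum_\mu$ for multi-part $\mu$ is consistent with the algebra structure, but this is automatic once it is known at the level of values on every $\lambda$.
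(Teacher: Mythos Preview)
Your proposal is correct and follows essentially the same approach as the paper: both realize $\Endid$ and $\supersym$ inside $\funonYD$, use Proposition~\ref{prop-Fock-value-perm-cycle} and Corollary~\ref{cor-value-of-shiftedpower} to match $\alpha_\mu$ with $2^{\length{\mu}}\shiftedpowersum_\mu$ as functions, and then appeal to the algebraic independence results (Corollaries~\ref{coro-alpha-properties} and~\ref{coro-pfrak-is-gen/basis} together with Proposition~\ref{prop-supersym-embed}) to conclude the isomorphism. You spell out the construction of $\Phi$ and its multiplicativity more explicitly than the paper does, but the argument is otherwise identical.
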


\begin{proof}
It is clear from Proposition \ref{prop-Fock-value-perm-cycle} and Corollary \ref{cor-value-of-shiftedpower} that $2^{-\length{\mu}}\alpha_\mu$ and $\shiftedpowersum_\mu$ map to the same function in $\funonYD$. Furthermore the collection of functions which are the image of $\{\shiftedpowersum_{2k+1}\}_{k \geq 0}$ are algebraically independent by Proposition \ref{prop-supersym-embed} and Corollary \ref{coro-pfrak-is-gen/basis}. By Proposition \ref{coro-alpha-properties} $\Endid$ is generated by the algebraically independent elements $\{\alpha_{2k+1}\}_{k \geq 0}$. It then follows that the map that sends $\alpha_\mu \mapsto 2^{\length{\mu}}\shiftedpowersum_\mu$ is an isomorphism.
\end{proof}

Let $\mu \in \oddpartitions[n]$. It follows from Lemma \ref{lemma-size-of-orbit}, Remark \ref{remark-closure-don't-care-about-conj-class-rep}, and Theorem \ref{thm-main} that
\begin{equation} \label{eqn-image-of-cong-class}
\begin{tikzpicture}

\node at (0,0) {\begin{tikzpicture}[scale = .8]

\draw[thick] (0,0) circle (1.8 cm);
\draw[thick] (0,0) circle (1.6 cm);
\draw[thick] (0,0) circle (1 cm);

\draw[fill=white,thick] (-2,-.3) rectangle (-.5,.3);

\node at (-1,.5) {$\cdot$};
\node at (-1.15,.55) {$\cdot$};
\node at (-1.3,.6) {$\cdot$};

\node at (-1.3,0) {\small{\text{$C_\mu$}}};

\node[rotate = -30] at (-1.5,1) {\arrowlines};
\node[rotate = -30] at (-1.3,.9) {\arrowlines};
\node[rotate = -30] at (-.8,.6) {\arrowlines};
\end{tikzpicture}};

\node at (2.1,0) {$=$};

\node at (3.35,0) {$\frac{n!}{z_{\mu}}2^{n - \length{\mu}}$};

\node at (5.8,0) {\begin{tikzpicture}[scale = .8]

\draw[thick] (0,0) circle (1.8 cm);
\draw[thick] (0,0) circle (1.6 cm);
\draw[thick] (0,0) circle (1 cm);

\draw[fill=white,thick] (-2,-.3) rectangle (-.5,.3);

\node at (-1,.5) {$\cdot$};
\node at (-1.15,.55) {$\cdot$};
\node at (-1.3,.6) {$\cdot$};

\node at (-1.3,0) {\small{$\mu$}};

\node[rotate = -30] at (-1.5,1) {\arrowlines};
\node[rotate = -30] at (-1.3,.9) {\arrowlines};
\node[rotate = -30] at (-.8,.6) {\arrowlines};
\end{tikzpicture}};

\node at (8.2,.3) {$\primaryiso$};
\draw[thick, ->] (7.6,0)--(8.8,0);
\draw[thick] (7.6,.1)--(7.6,-.1);

\node at (10,0) {$\frac{n!}{z_{\mu}}2^{n}\shiftedpowersum_\mu.$};

\end{tikzpicture}
\end{equation}

\begin{theorem}
Let $\lambda \in \strictpartitions[n]$. Under the isomorphism $\primaryiso: \Endid \rightarrow \supersym$, the closure of the central idempotent $\centralidem{\lambda}$ of $\Sern{n}$ maps to $\pathsfromzero{\lambda} Q_\lambda^{*}$.
\end{theorem}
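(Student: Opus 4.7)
The plan is to express the closure of $\centralidem{\lambda}$ as an explicit linear combination of the closed diagrams whose images under $\primaryiso$ we already know, and then recognize the resulting sum as a scalar multiple of the factorial Schur $Q$-function via \eqref{eqn-P-in-terms-of-frakp}. No new diagrammatic identities are needed; the entire argument is an algebraic bookkeeping step once the ingredients are in place.

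First, I would apply Lemma~\ref{lemma-central-idempotent} to write
\begin{equation*}
\centralidem{\lambda} = 2^{\frac{-\length{\lambda}-\lengthparity(\lambda)}{2}}\frac{\stanshiftedstrict{\lambda}}{n!} \sum_{\mu \in \oddpartitions[n]} \SerChar{\lambda}(\mu)\,C_\mu.
\end{equation*}
Since taking the closure is $\MB{C}$-linear, the image of the closure of $\centralidem{\lambda}$ under $\primaryiso$ is determined by the images of the closures of the $C_\mu$. For $\mu \in \oddpartitions[n]$ these are given by \eqref{eqn-image-of-cong-class}:
\begin{equation*}
\primaryiso(\text{closure of } C_\mu) = \frac{n!}{z_\mu}\,2^{n}\,\shiftedpowersum_\mu.
\end{equation*}

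Substituting this in and collecting the scalar prefactors gives
\begin{equation*}
\primaryiso(\text{closure of } \centralidem{\lambda}) = 2^{n-\frac{\length{\lambda}+\lengthparity(\lambda)}{2}}\,\stanshiftedstrict{\lambda}\,\sum_{\mu \in \oddpartitions[n]} \frac{\SerChar{\lambda}(\mu)}{z_\mu}\,\shiftedpowersum_\mu.
\end{equation*}
At this point the key observation is that by \eqref{eqn-P-in-terms-of-frakp},
\begin{equation*}
\sum_{\mu \in \oddpartitions[n]} \frac{\SerChar{\lambda}(\mu)}{z_\mu}\,\shiftedpowersum_\mu = 2^{-\frac{\length{\lambda}-\lengthparity(\lambda)}{2}}\,Q_\lambda^*,
\end{equation*}
so the right-hand side simplifies to $2^{n-\length{\lambda}}\,\stanshiftedstrict{\lambda}\,Q_\lambda^*$.

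Finally, I would identify the scalar using the identity $\pathsfromzero{\lambda} = 2^{|\lambda|-\length{\lambda}}\,\stanshiftedstrict{\lambda}$ recalled in Section~\ref{section-shifted-YD}, which yields $\pathsfromzero{\lambda}\,Q_\lambda^*$ as claimed. The only subtlety in this proof is correctly tracking the powers of $2$ arising from the type $\mathsf{M}$/type $\mathsf{Q}$ distinction encoded by $\lengthparity(\lambda)$; these appear in both Lemma~\ref{lemma-central-idempotent} and in \eqref{eqn-P-in-terms-of-frakp}, and they cancel precisely to leave $2^{n-\length{\lambda}}$, which is the ``path-counting'' exponent in $\pathsfromzero{\lambda}$. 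This cancellation is exactly the reason that the coefficients in the expansion of closures of idempotents in terms of factorial Schur $Q$-functions have a clean combinatorial meaning as path counts in $\Schurgraph$.
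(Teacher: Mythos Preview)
Your proof is correct and follows essentially the same approach as the paper's own proof: expand $\centralidem{\lambda}$ via Lemma~\ref{lemma-central-idempotent}, apply \eqref{eqn-image-of-cong-class} term by term, and recognize the result via \eqref{eqn-P-in-terms-of-frakp} and the identity $\pathsfromzero{\lambda} = 2^{n-\length{\lambda}}\stanshiftedstrict{\lambda}$. Your write-up is in fact slightly more explicit than the paper's in tracking the cancellation of the $\lengthparity(\lambda)$ terms.
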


\begin{proof}
Recall from Lemma \ref{lemma-central-idempotent} that
\begin{equation*}
\centralidem{\lambda} = 2^{\frac{-\length{\lambda}-\lengthparity(\lambda) }{2}}\frac{\stanshiftedstrict{\lambda}}{n!} \sum_{\mu \in \oddpartitions[n]} \SerChar{\lambda}(\mu)C_\mu
\end{equation*}
while by \eqref{eqn-P-in-terms-of-frakp}
\begin{equation*}
Q^*_\lambda =2^{\frac{\length{\lambda}-\lengthparity(\lambda)}{2}} \sum_{\mu \in \oddpartitions[n]}  \frac{\SerChar{\lambda}(\mu)}{z_\mu}\shiftedpowersum_\mu.
\end{equation*}
Combining these facts with Theorem \ref{thm-main} and \eqref{eqn-image-of-cong-class} it follows that the closure of $\centralidem{\lambda}$ is equal to $2^{n-\length{\lambda}}\stanshiftedstrict{\lambda}Q^*_\lambda = \pathsfromzero{\lambda}Q^*_\lambda$.
\end{proof}

\begin{remark}
Recall that the Schur $Q$-functions are related to the Schur $P$-functions by $P_\lambda = 2^{-\length{\lambda}}Q_\lambda$. Ivanov also studied factorial Schur $P$-functions $\{P^*_\lambda\}_{\lambda \in \strictpartitions[n]}$ where $P^*_\lambda = 2^{-\length{\lambda}}Q^*_\lambda$ \cite{Iv01}. Then one alternative description of the closure of $\centralidem{\lambda}$ in $\supersym$ is as $2^{n}\stanshiftedstrict{\lambda}P^*_\lambda $.
\end{remark}
   
Moving in the opposite direction, we can also identify the elements of $\supersym$ corresponding to the generators $\{d_{2k}\}_{k \geq 0}$ and $\{\bar{d}_{2k}\}_{k \geq 0}$.

\begin{theorem} \label{thm-identity-of-bubbles}
For $k \geq 0$, 
\begin{enumerate}
	\item $\psi(\bar{d}_{2k}) = \upmoment{k}(\cdot),$
	\item $\psi(d_{2k}) = \downmoment{k+1}(\cdot).$
\end{enumerate}
\end{theorem}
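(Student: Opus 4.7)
The plan is to verify each equality by computing values as functions on strict partitions and invoking injectivity of the embedding $\supersym \hookrightarrow \funonYD$ from Proposition \ref{prop-supersym-embed}. Recall that the isomorphism $\primaryiso$ was constructed so that for $x \in \Endid$ and $\lambda \in \strictpartitions[n]$ the value $\primaryiso(x)(\lambda)$ equals $\normSerChar{\lambda}(\FockSpaceFunctor{n}(x))$; this compatibility was the core of Theorem \ref{thm-main}. Consequently, to identify $\primaryiso(\bar{d}_{2k})$ with $\upmoment{k}$ and $\primaryiso(d_{2k})$ with $\downmoment{k+1}$, it suffices to show that for every $\lambda \in \strictpartitions[n]$ the normalized characters $\normSerChar{\lambda}(\FockSpaceFunctor{n}(\bar{d}_{2k}))$ and $\normSerChar{\lambda}(\FockSpaceFunctor{n}(d_{2k}))$ equal $\upmoment{k}(\lambda)$ and $\downmoment{k+1}(\lambda)$ respectively.

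First I would apply Proposition \ref{prop-curl-generators-bimodules} to express the Fock space images as
\[
\FockSpaceFunctor{n}(\bar{d}_{2k}) = \pr{n}(\JM{n+1}^{2k}), \qquad \FockSpaceFunctor{n}(d_{2k}) = \sum_{x \in \LcosSer{n}{n-1}} x \JM{n}^{2k} x^{-1}.
\]
These identifications rest on the diagrammatic cup and cap maps together with Lemma \ref{lemma-curl-is-JM}, which interprets a right twist curl as right multiplication by a Jucys--Murphy element. Next, I would invoke the two parts of Proposition \ref{proposition-algebraic-interp-moments}, which compute the normalized characters of exactly these expressions and identify them with $\upmoment{k}(\lambda)$ and $\downmoment{k+1}(\lambda)$ respectively (the second statement also handling the vanishing of odd powers of $\JM{n}$ which is irrelevant here since we have the even power $2k$).

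Composing these two chains of identifications yields the pointwise equalities $\primaryiso(\bar{d}_{2k})(\lambda) = \upmoment{k}(\lambda)$ and $\primaryiso(d_{2k})(\lambda) = \downmoment{k+1}(\lambda)$ for every $\lambda \in \strictpartitions$; by Proposition \ref{prop-supersym-embed} these pointwise equalities upgrade to equalities in $\supersym$. There is no real obstacle: the theorem is essentially a direct corollary of Propositions \ref{prop-curl-generators-bimodules} and \ref{proposition-algebraic-interp-moments}, where all of the substantive representation-theoretic work has already been carried out. The only cosmetic check is that the "evaluation at $\lambda$" used to define $\primaryiso$ through $\FockSpaceFunctor{n}$ is the same as the evaluation map \eqref{eqn-eval-map} that embeds $\supersym$ into $\funonYD$, but this identification is precisely what underlies the construction of $\primaryiso$ in Theorem \ref{thm-main}.
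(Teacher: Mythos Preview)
Your proposal is correct and takes essentially the same approach as the paper, which simply states that the theorem follows from Proposition \ref{proposition-algebraic-interp-moments} and Proposition \ref{prop-curl-generators-bimodules}. You have spelled out in detail the logic that the paper leaves implicit: applying $\FockSpaceFunctor{n}$ via Proposition \ref{prop-curl-generators-bimodules}, then applying $\normSerChar{\lambda}$ via Proposition \ref{proposition-algebraic-interp-moments}, and finally using injectivity of the embedding into $\funonYD$ to conclude equality in $\supersym$.
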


\begin{proof}
This follows from Proposition \ref{proposition-algebraic-interp-moments} and Proposition \ref{prop-curl-generators-bimodules}.
\end{proof}

\begin{remark}
In light of Theorem \ref{thm-identity-of-bubbles}, Proposition \ref{prop-recursive} can be seen as a diagrammatic manifestation of Proposition \ref{prop-recursive-relation-petrov}.
\end{remark}

\begin{table}[htp]\label{dictionary-table}
\begin{center}
{\tabulinesep=1.2mm
 \begin{tabular}{| m{2.4cm} ||  m{3.7cm} | m{3.7cm} | m{2cm} | m{2cm} |}
 \hline
$ \Gamma$ & $\shiftedpowersum_{\mu}$ & $Q^*_\lambda$ & $\upmoment{k}$ & $\downmoment{k+1}$ \\  \hline {\bf Diagram in} $\Endid$ &
  
 \begin{tikzpicture}[scale = .6, baseline=(current bounding box.center)]

\node at (-2.5,0) {};
\node at (0,2) {};
\draw[thick] (0,0) circle (1.85 cm);
\draw[thick] (0,0) circle (1.6 cm);
\draw[thick] (0,0) circle (1 cm);

\draw[fill=white,thick] (-2,-.3) rectangle (-.5,.3);

\node at (-1,.5) {$\cdot$};
\node at (-1.15,.55) {$\cdot$};
\node at (-1.3,.6) {$\cdot$};

\node at (-3,-.4) {$2^{\length{\mu}}$};
\node at (-3,.3) {$1$};
\draw[thick] (-3.8,0) -- (-2.3,0);

\node at (-1.3,-0.1) {$\mu$};

\node[rotate = 180] at (1,0) {\arrowlines};
\node[rotate = 180] at (1.6,0) {\arrowlines};
\node[rotate = 180] at (1.85,0) {\arrowlines};

\node at (-3.6,-2) {};

\end{tikzpicture}  &

  \begin{tikzpicture}[scale = .6, baseline=(current bounding box.center)]

\node at (-2.5,0) {};
\node at (0,2) {};
\draw[thick] (0,0) circle (1.85 cm);
\draw[thick] (0,0) circle (1.6 cm);
\draw[thick] (0,0) circle (1 cm);
\node at (0,-1.7) {};

\draw[fill=white,thick] (-2,-.3) rectangle (-.5,.3);

\node at (-1,.5) {$\cdot$};
\node at (-1.15,.55) {$\cdot$};
\node at (-1.3,.6) {$\cdot$};

\node at (-3,-.4) {${h(\lambda)}$};
\node at (-3,.3) {$1$};

\node at (-1.3,0) {\small{\text{$\centralidem{\lambda}$}}};

\draw[thick] (-3.8,0) -- (-2.3,0);

\node[rotate = 180] at (1,0) {\arrowlines};
\node[rotate = 180] at (1.6,0) {\arrowlines};
\node[rotate = 180] at (1.85,0) {\arrowlines};

\end{tikzpicture}
   &
    
  \cktildepicture  &
  
  \ckpicture  \\
\hline
\end{tabular}}
\end{center}
\caption{\label{i-class-table} A dictionary between $\supersym$ and diagrams in $\Endid$.}
\label{node-classification}
\end{table}


\subsection{An action of $\TrHEv$ on $\supersym$} \label{sect-Wminus-action}\label{W-algebra}

Aside from taking the Grothendieck group or center, another method for decategorifing a category $\MC{C}$ is taking the categorical trace of $\MC{C}$, $\Tr(\MC{C})$ (also known as the zeroth Hochschild homology of $\MC{C}$). See \cite{BGHL14} for a discussion of this method of decategorification. In \cite{OR17}, it is shown that the even part of the trace of $\Heis$, $\TrHEv$, is isomorphic to the vertex algebra $\Wminus$ at level one, a subalgebra of $\Winfty$ defined by Kac, Wang, and Yan \cite{KWY98}. 

In a diagrammatic setting such as this, the trace can be realized as the algebra of closed diagrams on an annulus. There is a natural action of $\Tr(\MC{C})$ on the center of the category $\MC{C}$, $\End_{\MC{C}}(\UnitModule)$, where diagrammatically a closed diagram on an annulus acts on a closed diagram in a disk by plugging the annulus with the disk, resulting in a new diagram in the disk. The results of \cite{OR17} along with Theorem \ref{thm-main} imply that $\Wminus$ acts on $\supersym$. This action is similar to the action of $\Winfty$ on the centers of symmetric group algebras described in \cite{LT01}. In this section we will first review $\Wminus$ and then describe the action of the generators of $\Wminus$ on basis elements of $\supersym$.

We first review the vertex algebra $W^-$, which appears in the trace of $\mathcal{H}_{tw}$. 

Let $\hat{\mathcal{D}}^-$ be the Lie algebra over the vector space spanned by $\{C\}\cup \{ t^{2k-1} g(D+(2k-1)/2); \ g\text{ even}\} \cup \{ t^{2k} f(D+k) ;\ f\text{ odd}\}$ where $k\in\mathbb{Z}$ and even and odd refer to even and odd polynomial functions. The Lie bracket in $\hat{\mathcal{D}}^-$ is defined by

\begin{equation}\label{WForm}
[t^rf(D),t^sg(D)]=t^{r+s}(f(D+s)g(D)-f(D)g(D+r))+\psi(t^rf(D),t^sg(D))C,
\end{equation}
where
\begin{equation}
\psi(t^rf(D),t^sg(D))= \begin{cases} 
      \ds\sum_{-r\leq j\leq -1}f(j)g(j+r) & r=-s\geq 0 \\
      0 & r+s\neq 0 
   \end{cases},
\end{equation}
and $C$ is a central element. Denote by $W^-$ the universal enveloping algebra of $\hat{\mathcal{D}}^-$. The trace of $\mathcal{H}_{tw}$ was shown in \cite{OR17} to be isomorphic to the quotient $W^-/\langle \omega_{0,0}, C-1 \rangle$. 


A generating set for $W^-/\langle \omega_{0,0}, C-1\rangle$ is given by $\omega_{1,0}$, $\omega_{0,3}$, and $\omega_{\pm 2,1}  \pm \omega_{\pm 2,0}$\cite[Lemma 2.2]{OR17}. In order to explicitly write down an action of the algebra $W^-$ on $\supersym$, we will work with different generating set.

\begin{proposition}
The algebra $W^-/\langle \omega_{0,0}, C-1\rangle$ is also generated by $\omega_{1,0}$, $\omega_{-1,0}$ and $\omega_{0,3}$.
\end{proposition}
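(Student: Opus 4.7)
By \cite[Lemma 2.2]{OR17}, the algebra $\Wminus/\langle \omega_{0,0}, C-1 \rangle$ is generated by $\omega_{1,0}$, $\omega_{0,3}$, together with the four elements $\omega_{\pm 2, 1} \pm \omega_{\pm 2, 0}$. Since $\omega_{1,0}$ and $\omega_{0,3}$ already appear in the proposed new generating set, the task reduces to exhibiting each of $\omega_{\pm 2, 1} \pm \omega_{\pm 2, 0}$ as an iterated Lie bracket involving $\omega_{1,0}$, $\omega_{-1,0}$, and $\omega_{0,3}$. All of the necessary computations will be carried out directly via the bracket formula \eqref{WForm}.

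The plan is to bracket $\omega_{0,3}$ against the shift generators $\omega_{\pm 1,0}$ and then iterate once more. First, I would compute $[\omega_{0,3}, \omega_{\pm 1,0}]$ by taking $f(D) = D^3$, $g(D) = 1$, $r = 0$, and $s = \pm 1$. Since $r + s = \pm 1 \neq 0$, there is no central contribution, and the bracket simplifies to $t^{\pm 1}((D \pm 1)^3 - D^3)$. Expanding the cubes and re-expressing the result in the shifted variable $D \pm 1/2$ shows that, modulo a scalar multiple of $\omega_{\pm 1,0}$ (which already lies in the subalgebra $\mathcal{A}$ generated by $\{\omega_{1,0}, \omega_{-1,0}, \omega_{0,3}\}$), the bracket is a nonzero multiple of $\omega_{\pm 1, 2}$. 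Hence $\omega_{\pm 1, 2} \in \mathcal{A}$.

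Next, I would compute $[\omega_{\pm 1, 0}, \omega_{\pm 1, 2}]$. Since $r + s = \pm 2 \neq 0$, the central-extension term $\psi(\cdot,\cdot)C$ in \eqref{WForm} again vanishes, and the bracket reduces to $t^{\pm 2}\bigl((D \pm 1/2)^2 - (D \pm 3/2)^2\bigr)$. Expanding this difference of squares and re-expressing it in the $t^{\pm 2}(D \pm 1)^{k}$ basis should yield, up to nonzero scalar, precisely the required combination $\omega_{\pm 2, 1} \pm \omega_{\pm 2, 0}$. Together with the previously identified elements, this places all four missing generators of \cite[Lemma 2.2]{OR17} into $\mathcal{A}$, and combining with that lemma completes the proof.

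The main obstacle will be verifying that the signs and scalar coefficients in the second step produce exactly the combination $\omega_{\pm 2, 1} \pm \omega_{\pm 2, 0}$, with the sign of $\omega_{\pm 2, 0}$ tied correctly to that of $\omega_{\pm 2, 1}$. This requires carefully unpacking the definition of $\omega_{r,k}$ from \cite{KWY98, OR17} and tracking the polynomial identities in $D$. Should the bracket $[\omega_{\pm 1, 0}, \omega_{\pm 1, 2}]$ land on a combination with the opposite sign from what is needed, I would produce the other sign pattern by a complementary bracket (for instance, by bracketing $\omega_{\mp 1, 0}$ with suitable elements already in $\mathcal{A}$, or by iterating with $\omega_{0,3}$ once more to access a different linear combination of $\omega_{\pm 2, 1}$ and $\omega_{\pm 2, 0}$), and then extracting the two generators by linear independence.
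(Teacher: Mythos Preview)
Your approach is correct and in fact more direct than the paper's. The paper first manufactures $\omega_{0,1}$ via a double bracket together with a product in the universal enveloping algebra, then uses it alongside $[\omega_{0,3},\omega_{\pm 1,0}]$ to isolate $\omega_{\pm 1,2}\mp\omega_{\pm 1,1}$ exactly, and only then brackets once more with $\omega_{\pm 1,0}$ to reach $\omega_{\pm 2,1}\pm\omega_{\pm 2,0}$. Your route of simply iterating the Lie bracket, i.e.\ computing $[\omega_{\pm 1,0},[\omega_{0,3},\omega_{\pm 1,0}]]$, already lands on $-6(\omega_{\pm 2,1}\pm\omega_{\pm 2,0})$: the residual $\omega_{\pm 1,0}$ term left over from the first bracket is annihilated by $[\omega_{\pm 1,0},\omega_{\pm 1,0}]=0$, so no detour through $\omega_{0,1}$ or enveloping-algebra products is needed. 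That is a genuine simplification.

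Two small points of care. First, in the standard convention $\omega_{r,k}=t^rD^k$, the element $\omega_{\pm 1,2}$ by itself does not belong to $W^-$; what your first bracket produces (modulo $\omega_{\pm 1,0}$) is $t^{\pm 1}(D\pm\tfrac12)^2=\omega_{\pm 1,2}\mp\omega_{\pm 1,1}+\tfrac14\omega_{\pm 1,0}$, which does lie in $W^-$. Your parenthetical about ``re-expressing in the shifted variable $D\pm 1/2$'' shows you have the right object in mind, but you should not write ``$\omega_{\pm 1,2}\in\mathcal{A}$'' without qualification. Second, there are only two missing generators, not four: the signs in $\omega_{\pm 2,1}\pm\omega_{\pm 2,0}$ are tied (these are $t^{\pm 2}(D\pm 1)$, the odd polynomial $f(x)=x$ evaluated at $D\pm 1$), so your fallback paragraph about extracting an opposite sign pattern is unnecessary.
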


\begin{proof}
    We will show that we can obtain the aforementioned generators $\omega_{\pm 2,1}  \pm \omega_{\pm 2,0}$ via the elements $\omega_{1,0}$, $\omega_{-1,0}$ and $\omega_{0,3}$. It is a straightforward computation that 
   \begin{equation*}
   \omega_{0,1}=-\frac{1}{20}[[\omega_{0,3}, \omega_{-1,0}], \omega_{1,0}] +\frac{1}{5}\omega_{-1,0}\omega_{1,0}
   \end{equation*}
   and using $\omega_{0,1}$, we can obtain $\omega_{-1,2}-\omega_{-1,1}$ as follows:
   \begin{equation*}
   \omega_{-1,2}-\omega_{-1,1}=\frac{1}{6}[\omega_{0,3}, \omega_{-1,0}] + \frac{1}{3}\omega_{-1,0}\omega_{0,1}.
   \end{equation*}
Then one of the elements we are looking for is given by
   \begin{equation*}
   \omega_{-2,1} - \omega_{-2,0}=\frac{1}{2}[\omega_{-1,2}-\omega_{-1,1}, \omega_{-1,0}].
   \end{equation*}
    To obtain $\omega_{2,1} + \omega_{2,0}$, we follow a very similar computation:
    \begin{equation*}
   \omega_{1,2}+\omega_{1,1}=-\frac{1}{6}[\omega_{0,3}, \omega_{1,0}] + \frac{1}{3}\omega_{0,1}\omega_{1,0},
    \end{equation*}
    and finally
    \begin{equation*}
    \omega_{2,1} + \omega_{2,0}=-\frac{1}{2}[\omega_{1,2}+\omega_{1,1}, \omega_{1,0}]. \qedhere
    \end{equation*}
\end{proof}

The images of these generators under the isomorphism $W^-/\langle \omega_{0,0}, C-1\rangle \rightarrow \TrHEv$ from \cite{OR17} are given below. In the following diagrams, an asterisk denotes a puncture in the plane, forming the annulus around which the diagrams are closed in the trace:

\begin{equation*}
	\sqrt{2}\omega_{-1,0} \longmapsto \trup;
	\end{equation*}
	
	\begin{equation*}
	    \sqrt{2}\omega_{1,0} \longmapsto \trdown; 
	\end{equation*}
	
	\begin{equation*}
				-2\omega_{0,3} \longmapsto 
				\begin{tikzpicture}[baseline=(current bounding box).center]
				\draw[thick] (2,2) circle (.5cm);
				\node at (2,2) {$*$};
				\node at (1.5,2) {\arrowlines};
				\draw[fill=black] (1.62,2.33) circle (.08cm);
				\node at (1.4,2.5) {$\small{2}$};
				\end{tikzpicture}
				=
				\begin{tikzpicture}[baseline=(current bounding box.center), scale=.25]
				\draw[thick] (2,0) .. controls (2,1.25) and (0,.25) .. (0,2);
				\draw[thick] (0,0) .. controls (0,1) and (.8,.8) .. (1,2);
				\draw[thick] (1,0) .. controls (1,1) and (1.8,.8) .. (2,2);
				\node at (0,2){\arrowlines};
				\node at (1,2){\arrowlines};
				\node at (2,2){\arrowlines};
				\node at (4,1){$*$};
				\draw[thick] (2,2) arc (150:-150:2cm);
				\draw[thick] (1,2) arc (160.5:-160.5:3cm);
				\draw[thick] (0,2) arc (165.5:-165.5:4cm);
				\end{tikzpicture}
				+
				\begin{tikzpicture}[baseline=(current bounding box.center), scale=.25]
				\draw[thick] (1,0) to (1,2);
				\draw[thick] (2,0) to (2,2); 
				\node at (1,2){\arrowlines};
				\node at (2,2){\arrowlines};
				\draw[thick] (2,2) arc (150:-150:2cm);
				\draw[thick] (1,2) arc (160.5:-160.5:3cm);
				\node at (4,1){$*$};
				\end{tikzpicture}.
		\end{equation*}
		
		We will also use the elements $\omega_{-(2n+1),0}$ and their images in $\TrHEv$:
		
		\begin{equation}{\label{omega2n+1}}
		\sqrt{2}\omega_{-(2n+1),0} \longmapsto \begin{tikzpicture}[baseline=(current bounding box).center, scale = .8]
			
			\draw[thick] (0,0) circle (1.8 cm);
			\draw[thick] (0,0) circle (1.6 cm);
			\draw[thick] (0,0) circle (1 cm);
			
			\node at (0,0){*};
			
			\draw[fill=white,thick] (-2,-.3) rectangle (-.5,.3);
			
			\node at (-1,.5) {$\cdot$};
			\node at (-1.15,.55) {$\cdot$};
			\node at (-1.3,.6) {$\cdot$};
			
			\node at (-1.3,0) {$\tau$};
			
			\node[rotate = -30] at (-1.5,1) {\arrowlines};
			\node[rotate = -30] at (-1.3,.9) {\arrowlines};
			\node[rotate = -30] at (-.8,.6) {\arrowlines};
			\end{tikzpicture}
		\end{equation}
			where $\tau$ is a $(2n+1)$-cycle.

   We now describe the action of the generating set $\{\omega_{1,0}, \omega_{-1,0}, \omega_{0,3}\}$ of $W^-$ on the vector space basis $\{\shiftedpowersum_\mu\}_{\mu\in \oddpartitions}$ of $\supersym$. We achieve this by describing the action of the corresponding generators of $\TrHEv$ on the basis $\{\alpha_{\mu}\}_{\mu \in \oddpartitions}$ of $\Hcenter$. 

\begin{lemma}{\label{alphamu1}} We have
	$$\alpha_{(\mu,1)}= \alpha_{\mu}\alpha_1 -2|\mu|\alpha_{\mu}.$$
\end{lemma}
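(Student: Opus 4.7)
The plan is to apply the isomorphism $\primaryiso: \Endid \xrightarrow{\sim} \supersym$ from Theorem \ref{thm-main} to transport the identity to one involving inhomogeneous power sums in $\supersym$, and then verify it pointwise on strict partitions. Since $\primaryiso(\alpha_\nu) = 2^{\length{\nu}}\shiftedpowersum_\nu$, the claim $\alpha_{(\mu,1)} = \alpha_\mu \alpha_1 - 2|\mu|\alpha_\mu$ is, after applying $\primaryiso$ and dividing the common factor $2^{\length{\mu}+1}$, equivalent to
$$\shiftedpowersum_{(\mu,1)} = \shiftedpowersum_\mu \cdot \shiftedpowersum_1 - |\mu|\,\shiftedpowersum_\mu$$
in $\supersym$.

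By Proposition \ref{prop-supersym-embed}, the evaluation map embeds $\supersym \hookrightarrow \funonYD$, so it suffices to verify this identity on an arbitrary $\lambda \in \strictpartitions[n]$. The computation follows directly from Corollary \ref{cor-value-of-shiftedpower}. Specializing that formula gives $\shiftedpowersum_1(\lambda) = n$,
$$\shiftedpowersum_\mu(\lambda) = 2^{|\mu|-\length{\mu}}\, n^{\downarrow|\mu|}\, \frac{\chi^\lambda(\mu \cup 1^{n-|\mu|})}{\chi^\lambda(1^n)},$$
and
$$\shiftedpowersum_{(\mu,1)}(\lambda) = 2^{|\mu|-\length{\mu}}\, n^{\downarrow|\mu|+1}\, \frac{\chi^\lambda(\mu \cup 1^{n-|\mu|})}{\chi^\lambda(1^n)},$$
for $n \geq |\mu|+1$. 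Since $n^{\downarrow|\mu|+1} = (n-|\mu|)\,n^{\downarrow|\mu|}$, the right-hand side equals $(n - |\mu|)\,\shiftedpowersum_\mu(\lambda) = \shiftedpowersum_\mu(\lambda)\shiftedpowersum_1(\lambda) - |\mu|\,\shiftedpowersum_\mu(\lambda)$, matching the left-hand side. The edge cases $n = |\mu|$ and $n < |\mu|$ are immediate: in both situations $\shiftedpowersum_{(\mu,1)}(\lambda) = 0$ as $|\lambda| < |(\mu,1)|$, and correspondingly the right-hand side vanishes either because $n - |\mu| = 0$ or because $\shiftedpowersum_\mu(\lambda) = 0$.

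There is no substantial obstacle; the only care needed is in bookkeeping the factors of $2$ introduced by $\primaryiso$ and in treating the trivial cases where both sides vanish. A more diagrammatic alternative would be to push the calculation through the Fock space functors: for $n \geq |\mu|+1$, one could compare $\FockSpaceFunctor{n}(\alpha_{(\mu,1)})$ with $\FockSpaceFunctor{n}(\alpha_\mu \alpha_1 - 2|\mu|\alpha_\mu)$ in $\evencenter{n}$ using Proposition \ref{prop-perm-diagrams-mapping} and the explicit expression for $\Sergeevclasssum{\mu}{n}$ in Proposition \ref{prop-conj-classes-go-to-zero}.\ref{item-scaled-basis}, showing that both are equal scalar multiples of the same class sum $\wh{C}_{(\mu \cup 1^{n-|\mu|},\emptyset,0)}$; this is essentially the same computation packaged differently.
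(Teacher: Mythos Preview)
Your proof is correct, but it takes a genuinely different route from the paper's. The paper gives a direct diagrammatic argument inside $\Heis$: it invokes the local bubble sliding relation (the $n=0$ case of Lemma~\ref{lemma-bubble-sliding-relation}), which says that moving a clockwise bubble $\alpha_1$ past a single downward strand costs a correction term of $-2$ times the strand. Applying this $|\mu|$ times as the innermost bubble of $\alpha_{(\mu,1)}$ is pulled outside the $|\mu|$ strands of $\alpha_\mu$ immediately yields $\alpha_{(\mu,1)} = \alpha_\mu\alpha_1 - 2|\mu|\alpha_\mu$.

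Your approach instead invokes the isomorphism $\primaryiso$ of Theorem~\ref{thm-main} and the embedding of $\supersym$ into $\funonYD$ to reduce the statement to the arithmetic identity $n^{\downarrow|\mu|+1} = (n-|\mu|)\,n^{\downarrow|\mu|}$. This is perfectly valid and non-circular, since Theorem~\ref{thm-main} is established in Section~\ref{Section-main} before this lemma and does not depend on it. What the paper's argument buys is independence from the main theorem: it shows the identity is a direct consequence of the graphical calculus, which is conceptually cleaner in the context where the lemma is used (computing the $\Wminus$-action diagrammatically). What your argument buys is that it avoids any diagrammatic manipulation at all, trading it for the already-established dictionary with $\supersym$; it also makes transparent that the identity is really just the falling-factorial recursion in disguise.
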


\begin{proof}
	  This simply follows from the local bubble sliding relation 
	  
	  \begin{equation*}
	  \begin{tikzpicture}[baseline=(current bounding box.center),scale=.6]
	  \draw[thick] (0,0) circle(1cm);
	  \node at (-1,0){$\arrowlines$};
	  \draw[thick] (1.5,1)--(1.5,-1);
	  \node[rotate=180] at (1.5,0){$\arrowlines$};
	  \end{tikzpicture}
	  =
	  \begin{tikzpicture}[baseline=(current bounding box.center),scale=.6]
	  \draw[thick] (0,0) circle(1cm);
	  \node at (-1,0){$\arrowlines$};
	  \draw[thick] (-1.5,1)--(-1.5,-1);
	  \node[rotate=180] at (-1.5,0){$\arrowlines$};
	  \end{tikzpicture}
	  -2
	  \begin{tikzpicture}[baseline=(current bounding box.center),scale=.6]
	  \draw[thick] (-1.5,1)--(-1.5,-1);
	  \node[rotate=180] at (-1.5,0){$\arrowlines$};
	  \end{tikzpicture}
	  \end{equation*}
	  applied $|\mu|$ times to the diagram $\alpha_{(\mu,1)}$, as we pull the clockwise bubble $\alpha_1$ from within $\alpha_{\mu}$.
\end{proof}

\begin{lemma}{\label{omega10}} We have
	$$\omega_{1,0}\cdot \alpha_\mu\alpha_1 =(\alpha_1+2)\omega_{1,0}\cdot \alpha_\mu .$$
\end{lemma}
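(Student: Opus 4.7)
The plan is a direct diagrammatic calculation leveraging the description of the $\TrHEv$-action on $\Endid$ recalled at the start of Section~\ref{sect-Wminus-action}. Up to the scalar $\sqrt{2}$, the trace element $\omega_{1,0}$ is represented by the annular diagram $\trdown$---a single counterclockwise loop (oriented downward at its leftmost point) around a central puncture---and its action on a closed diagram $D\in\Endid$ is realized by plugging $D$ into the puncture, producing a disk diagram in which $D$ is enclosed by the $\omega_{1,0}$-loop. Applied to $D=\alpha_\mu\alpha_1$, the diagram $\sqrt{2}\,\omega_{1,0}\cdot(\alpha_\mu\alpha_1)$ has both $\alpha_\mu$ and the free bubble $\alpha_1$ sitting inside the $\omega_{1,0}$-loop.

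I would then isotope $\alpha_1$ toward a portion of the $\omega_{1,0}$-loop well away from $\alpha_\mu$---for definiteness, toward the leftmost point of the loop, where the loop strand is locally a vertical downward-oriented segment. In this local picture the clockwise bubble $\alpha_1$ sits east of the downward strand, which is exactly the right-hand-side configuration of the local bubble-sliding identity used in the proof of Lemma~\ref{alphamu1}. That identity lets us move $\alpha_1$ to the west side of the strand (thus outside the $\omega_{1,0}$-loop) at the cost of an additive $+2$ correction equal to the global diagram with $\alpha_1$ deleted, namely $\sqrt{2}\,\omega_{1,0}\cdot\alpha_\mu$. This yields
\[
\sqrt{2}\,\omega_{1,0}\cdot(\alpha_\mu\alpha_1) \;=\; \alpha_1\cdot\sqrt{2}\,\omega_{1,0}\cdot\alpha_\mu \;+\; 2\,\sqrt{2}\,\omega_{1,0}\cdot\alpha_\mu \;=\; (\alpha_1+2)\,\sqrt{2}\,\omega_{1,0}\cdot\alpha_\mu,
\]
and dividing by $\sqrt{2}$ proves the lemma.

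The main subtlety will be orientation bookkeeping. The $\omega_{1,0}$-loop is oriented oppositely to the nested clockwise loops comprising $\alpha_\mu$, so that ``inside the $\omega_{1,0}$-loop'' corresponds to the east side of its downward strand---matching the right-hand side of the local identity from the proof of Lemma~\ref{alphamu1}. This is precisely why moving $\alpha_1$ from inside to outside produces a $+2$ correction here, in contrast to the $-2|\mu|$ correction of Lemma~\ref{alphamu1}, where inside $\alpha_\mu$ corresponded to the left-hand side of the same local identity. Once this sign is verified, the rest of the computation is essentially immediate, since a simple closed curve in the disk requires exactly one application of the local identity to transport $\alpha_1$ from interior to exterior.
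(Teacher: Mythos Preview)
Your proposal is correct and follows essentially the same approach as the paper: both arguments slide the clockwise bubble $\alpha_1$ through the counterclockwise $\omega_{1,0}$-loop and pick up the $+2$ correction term. The only cosmetic difference is that the paper re-derives the bubble slide at the right side of the loop directly from relation~\eqref{eqn-symmetric-group-relations} (producing three terms, two of which coalesce), whereas you invoke the already-packaged local identity from the proof of Lemma~\ref{alphamu1} at the leftmost (downward) portion of the loop; your orientation analysis correctly identifies why the sign is $+2$ here rather than the $-2$ appearing in Lemma~\ref{alphamu1}.
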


\begin{proof}
	We compute: \begin{align*}
	\begin{tikzpicture}[baseline=(current bounding box.center),scale=.6]
	\draw[thick] (0,0) circle (2cm);
	\node[rotate=180] at (-2,0){$\arrowlines$};
	\draw[thick] (-1.5,-.5) rectangle (-.5,.5);
	\node at (-1,0){$\alpha_\mu$};
	\draw[thick] (-1,.5) arc (150:0:.5cm);
	\draw[thick] (-.07,.3) -- (-.07,-.3);
	\draw[thick] (-1,-.5) arc (-150:0:.5cm);
	\node[rotate=-90] at (-.5,.75){$\arrowlines$};
	\draw[thick] (1,0) circle(.7cm);
	\node at (.3,0){$\arrowlines$};
	\end{tikzpicture}
	&=
	\begin{tikzpicture}[baseline=(current bounding box.center),scale=.6]
	\draw[thick] (0,0) circle (2cm);
	\node[rotate=180] at (-2,0){$\arrowlines$};
	\node at (2,0){$\arrowlines$};
	\draw[thick] (-1.5,-.5) rectangle (-.5,.5);
	\node at (-1,0){$\alpha_\mu$};
	\draw[thick] (-1,.5) arc (150:0:.5cm);
	\draw[thick] (-.07,.3) -- (-.07,-.3);
	\draw[thick] (-1,-.5) arc (-150:0:.5cm);
	\node[rotate=-90] at (-.5,.75){$\arrowlines$};
	\draw[thick] (1.7,0) circle(.7cm);
	\node at (1,0){$\arrowlines$};
	\end{tikzpicture}
	+
	\begin{tikzpicture}[baseline=(current bounding box.center),scale=.6]
	\draw[thick] (-2,0) arc (180:20:2cm);
	\draw[thick] (-2,0) arc (-180:-20:2cm);
	\node[rotate=180] at (-2,0){$\arrowlines$};
	\draw[thick] (-1.5,-.5) rectangle (-.5,.5);
	\node at (-1,0){$\alpha_\mu$};
	\draw[thick] (-1,.5) arc (150:0:.5cm);
	\draw[thick] (-.07,.3) -- (-.07,-.3);
	\draw[thick] (-1,-.5) arc (-150:0:.5cm);
	\node[rotate=-90] at (-.5,.75){$\arrowlines$};
	\node at (.33,0){$\arrowlines$};
	\draw[thick,red] (1.87,.7) to[out=-60,in=-60] (1.45,.65);
	\draw[thick] (1.45,.65) arc (60:300:.75cm);
	\draw[thick,red] (1.45,-.65) to[out=60, in=60] (1.87,-.7);
	\end{tikzpicture}
	+
	\begin{tikzpicture}[baseline=(current bounding box.center),scale=.6]
	\draw[thick] (-2,0) arc (180:20:2cm);
	\draw[thick] (-2,0) arc (-180:-20:2cm);
	\node[rotate=180] at (-2,0){$\arrowlines$};
	\draw[thick] (-1.5,-.5) rectangle (-.5,.5);
	\node at (-1,0){$\alpha_\mu$};
	\draw[thick] (-1,.5) arc (150:0:.5cm);
	\draw[thick] (-.07,.3) -- (-.07,-.3);
	\draw[thick] (-1,-.5) arc (-150:0:.5cm);
	\node[rotate=-90] at (-.5,.75){$\arrowlines$};
	\node at (.33,0){$\arrowlines$};
	\draw[thick,red] (1.87,.7) to[out=-60,in=-60] (1.45,.65);
	\draw[thick] (1.45,.65) arc (60:300:.75cm);
	\draw[thick,red] (1.45,-.65) to[out=60, in=60] (1.87,-.7);
	\draw[thick,red] (1.6,.57) circle (4pt);
	\draw[thick,red] (1.8,-.57) circle (4pt);
	\end{tikzpicture}\\
	&=
	\begin{tikzpicture}[baseline=(current bounding box.center),scale=.6]
	\draw[thick] (0,0) circle (2cm);
	\node[rotate=180] at (-2,0){$\arrowlines$};
	\draw[thick] (-.7,-.5) rectangle (0.3,.5);
	\node at (-.2,0){$\alpha_\mu$};
	\draw[thick] (-.2,.5) arc (150:0:.5cm);
	\draw[thick] (.73,.3) -- (.73,-.3);
	\draw[thick] (-.2,-.5) arc (-150:0:.5cm);
	\node[rotate=-90] at (.3,.75){$\arrowlines$};
	\draw[thick] (3,0) circle(.7cm);
	\node at (2.3,0){$\arrowlines$};
	\end{tikzpicture}
	+2
	\begin{tikzpicture}[baseline=(current bounding box.center),scale=.6]
	\draw[thick] (0,0) circle (2cm);
	\node[rotate=180] at (-2,0){$\arrowlines$};
	\draw[thick] (-.7,-.5) rectangle (0.3,.5);
	\node at (-.2,0){$\alpha_\mu$};
	\draw[thick] (-.2,.5) arc (150:0:.5cm);
	\draw[thick] (.73,.3) -- (.73,-.3);
	\draw[thick] (-.2,-.5) arc (-150:0:.5cm);
	\node[rotate=-90] at (.3,.75){$\arrowlines$};
	\end{tikzpicture},
	\end{align*}
	as desired.
\end{proof}

\begin{theorem}
The generators $\TrHEv$ act on the basis elements $\{ \shiftedpowersum_\mu\}_{\mu \in \oddpartitions}$ of $\supersym$ as follows:
\begin{enumerate} 
\item $\omega_{-1,0}\cdot \shiftedpowersum_\mu = \sqrt{2}\shiftedpowersum_{(\mu,1)}$ 
 \item $\omega_{1,0}\cdot \shiftedpowersum_\mu = 
\frac{1}{\sqrt{2}} \shiftedpowersum_\mu+ \frac{k}{\sqrt{2}} \shiftedpowersum_{\hat{\mu}}$
\item $\omega_{0,3}\cdot \shiftedpowersum_\mu = -\shiftedpowersum_3 \shiftedpowersum_\mu -2 \shiftedpowersum_{(1,1)} \shiftedpowersum_\mu.$
\end{enumerate}
where $k$ is the number of parts of size 1 of $\mu$ and $\hat{\mu}$ stands for the partition obtained by removing one part of size 1 from $\mu$ if this is possible. When $\mu = (1)$ then $\shiftedpowersum_{\wh{(1)}}=1$.
\end{theorem}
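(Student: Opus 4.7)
The proof computes the action of each of the three generators of $\TrHEv$ on the basis $\{\alpha_\mu\}_{\mu\in\oddpartitions}$ of $\Endid$, then translates via the isomorphism $\primaryisom$ to obtain the formulas in $\supersym$.

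For part~(1), by \eqref{omega2n+1} with $n=0$, the element $\sqrt{2}\omega_{-1,0}$ corresponds diagrammatically to a clockwise closure of a single strand around the puncture. The trace action plugs $\alpha_\mu$ into the hole of the annulus, producing a concentric diagram that, by the definition of $\alpha_{(\mu,1)}$ as the closure of a permutation of cycle type $(\mu,1)$, equals $\alpha_{(\mu,1)}$. The formula then follows after translating via $\primaryisom$, using $\alpha_\mu\mapsto 2^{\length{\mu}}\shiftedpowersum_\mu$ and $\alpha_{(\mu,1)}\mapsto 2^{\length{\mu}+1}\shiftedpowersum_{(\mu,1)}$.

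For part~(2), I would induct on the number $k$ of parts of $\mu$ equal to $1$. Writing $\mu=(\mu_0,1^k)$ with $\mu_0$ containing no $1$'s, iterating Lemma~\ref{alphamu1} yields the factorization $\alpha_\mu=\alpha_{\mu_0}\prod_{i=0}^{k-1}\bigl(\alpha_1-2(|\mu_0|+i)\bigr)$. Applying $\sqrt{2}\omega_{1,0}$ and invoking Lemma~\ref{omega10} repeatedly shifts each factor $\alpha_1$ inside the product to $\alpha_1+2$. Given the base case $\sqrt{2}\omega_{1,0}\cdot\alpha_{\mu_0}=\alpha_{\mu_0}$ (established by a diagrammatic sliding argument analogous to Lemma~\ref{omega10} applied to $\alpha_m$ with $m\geq 3$, anchored by $\sqrt{2}\omega_{1,0}\cdot 1=\bar{d}_0=1$), a direct polynomial manipulation gives $\sqrt{2}\omega_{1,0}\cdot\alpha_\mu=\alpha_\mu+2k\alpha_{\hat\mu}$, which translates to the stated formula.

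For part~(3), I would use the decomposition of $-2\omega_{0,3}$ reproduced in the paper, which identifies this trace element with the clockwise bubble $d_2$ wrapped around the puncture and further expresses it as a sum of a $3$-cycle closure and a $2$-strand closure. Because $d_2$ is a detached annular bubble, the trace action on $\alpha_\mu$ reduces to multiplication by $d_2$ in $\Endid$; evaluating the decomposition on the empty diagram yields the identity $d_2=\alpha_3+\alpha_{(1,1)}$ in $\Endid$, so $-2\omega_{0,3}\cdot\alpha_\mu=(\alpha_3+\alpha_{(1,1)})\alpha_\mu$. Applying the algebra homomorphism $\primaryisom$, which sends $\alpha_3\mapsto 2\shiftedpowersum_3$ and $\alpha_{(1,1)}\mapsto 4\shiftedpowersum_{(1,1)}$, produces the claimed multiplication formula. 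The hardest part will be the base case of the induction in part~(2)---justifying that the counterclockwise loop of $\sqrt{2}\omega_{1,0}$ slides past cycles $\alpha_m$ with $m\geq 3$ without picking up a correction term; this should follow either from a diagrammatic manipulation paralleling the proof of Lemma~\ref{omega10}, or from the commutation $[\omega_{1,0},\omega_{-(2n+1),0}]=0$ in $W^-$ for $n\geq 1$ combined with the natural extension of part~(1) to all $\omega_{-(2n+1),0}$.
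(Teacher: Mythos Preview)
Your proposal is correct and follows essentially the same approach as the paper: parts~(1) and~(3) are handled identically (diagrammatic action for $\omega_{-1,0}$, multiplication for $\omega_{0,3}$), and for part~(2) both arguments reduce via Lemmas~\ref{alphamu1} and~\ref{omega10} to the base case $\sqrt{2}\omega_{1,0}\cdot\alpha_{\mu_0}=\alpha_{\mu_0}$ when $\mu_0$ has no parts equal to $1$. The paper establishes that base case by an explicit diagrammatic computation for a single odd cycle $\alpha_k$ (showing all resolution terms vanish) and then extends to arbitrary $\mu_0$ via the commutation $[\omega_{-(2n+1),0},\omega_{1,0}]=0$; your second suggested route---using the commutation relations alone, anchored at $\sqrt{2}\omega_{1,0}\cdot 1=\bar d_0=1$---in fact already yields the single-cycle case without the diagrammatic work, so it is a mild streamlining of the paper's argument.
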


\begin{proof}
	For the action of $\omega_{-1,0}$, note that the action of $\trup$ on $\alpha_{\mu}$ is diagrammatically just enclosing the diagram of $\alpha_{\mu}$ by a clockwise oriented strand:
	\begin{equation} \label{eqn-enclosing-bubble}
	\trup \cdot \alpha_{\mu} = 
	\begin{tikzpicture}[baseline=(current bounding box).center]
	\draw[thick] (2,2) circle (.5cm);
	\node at (1.5,2) {\arrowlines};
	\node at (2,2) {$\alpha_{\mu}$};
	\end{tikzpicture}
	\end{equation}
	and the resulting diagram is the diagram of $\alpha_{(\mu,1)}$. Replacing $\alpha_\mu$ by $2^{\length{\mu}}\shiftedpowersum_{\mu}$ and the clockwise bubble by $\sqrt{2}\omega_{-1,0}$, we get 
$
\omega_{-1,0}\cdot \shiftedpowersum_\mu = \sqrt{2}\shiftedpowersum_{(\mu,1)}.
$

We also know that $\omega_{-(2n+1),0}\cdot \shiftedpowersum_{\mu}= \sqrt{2}\shiftedpowersum_{(\mu,2n+1)}$ from \eqref{omega2n+1}. Note that we have
$[\omega_{-1,0}, \omega_{1,0}]=-1$ and, for $n\geq 0$, we have
	$[\omega_{-(2n+1),0}, \omega_{1,0}]=0 $.
	\omitt{
	\begin{equation*}
	\trdown \cdot \alpha_{\rho} = 
	\begin{tikzpicture}[baseline=(current bounding box).center]
	\draw[thick] (2,2) circle (.5cm);
	\node[rotate=180] at (1.5,2) {\arrowlines};
	\node at (2,2) {$\alpha_{\rho}$};
	\end{tikzpicture}
	\end{equation*}
	} 
	
	To simplify the notation in the following computations, denote $\op:=\sqrt{2}\omega_{1,0}$.	

		We start by showing that if the partition $\mu$ doesn't contain any parts of size one, then $\op \cdot \alpha_\mu=\alpha_{\mu}$ by induction on $\length{\mu}$. We provide a diagrammatic proof for the base case $\length{\mu}=1$ (i.e. $\alpha_\mu=\alpha_k$ for $k\neq1$ odd).
		
		In the diagram \begin{tikzpicture}[baseline=(current bounding box).center]
		\draw[thick] (2,2) circle (.5cm);
		\node[rotate=180] at (1.5,2) {\arrowlines};
		\node at (2,2) {$\alpha_{k}$};
		\end{tikzpicture}, we claim that we can pass $\alpha_{k}$ through the outer strand for free, meaning that all the resolution terms that appear as a result of relation \ref{eqn-symmetric-group-relations} are zero.
		
		We provide the computation for the case of $\alpha_{k} = \alpha_{5}$, and explain how the arguments generalize to any $\alpha_{k}$. We have
		\begin{align*}
		\begin{tikzpicture}[baseline=(current bounding box.center), scale=.25]
		\draw[thick] (-2,0) .. controls (-2,1) and (-1.2,.8) .. (-1,2);
		\draw[thick] (-1,0) .. controls (-1,1) and (-.2,.8) .. (0,2);
		\draw[thick] (0,0) .. controls (0,1) and (.8,.8) .. (1,2);
		\draw[thick] (1,0) .. controls (1,1) and (1.8,.8) .. (2,2);
		\draw[thick] (2,0) .. controls (2,1.25) and (-2,.25) .. (-2,2);
		\draw[thick] (-3,0)--(-3,2);
		\node at (2,1.9){\arrowlines};
		\node at (1,1.9){\arrowlines};
		\node at (0,1.9){\arrowlines};
		\node at (-1,1.9){\arrowlines};
		\node at (-2,1.9){\arrowlines};
		\node[rotate=-180] at (-3,1.9){\arrowlines};
		\draw[thick] (2,2) arc (150:-150:2cm);
		\draw[thick] (1,2) arc (160.5:-160.5:3cm);
		\draw[thick] (0,2) arc (165.5:-165.5:4cm);
		\draw[thick] (-1,2) arc (169:-169.5:5cm);
		\draw[thick] (-2,2) arc (171:-171:6cm);
		\draw[thick] (-3,2) arc (172:-172:7cm);
		\end{tikzpicture}
		&=
		\begin{tikzpicture}[baseline=(current bounding box.center), scale=.25]
		\draw[thick] (-2,0) .. controls (-2,1) and (-1.2,.8) .. (-1,2);
		\draw[thick] (-1,0) .. controls (-1,1) and (-.2,.8) .. (0,2);
		\draw[thick] (0,0) .. controls (0,1) and (.8,.8) .. (1,2);
		\draw[thick] (1,0) .. controls (1,1) and (1.8,.8) .. (2,2);
		\draw[thick] (2,0) .. controls (2,1.25) and (-2,.25) .. (-2,2);
		\draw[thick] (-3,0)--(-3,2);
		\node at (2,1.9){\arrowlines};
		\node at (1,1.9){\arrowlines};
		\node at (0,1.9){\arrowlines};
		\node at (-1,1.9){\arrowlines};
		\node at (-2,1.9){\arrowlines};
		\node[rotate=-180] at (-3,1.9){\arrowlines};
		\node[rotate=-180] at (9.9,1){\arrowlines};
		\node at (9.3,1){\redarrowlines};
		\draw[thick] (2,2) arc (150:-150:2cm);
		\draw[thick] (1,2) arc (160.5:-160.5:3cm);
		\draw[thick] (0,2) arc (165.5:-165.5:4cm);
		\draw[thick] (-1,2) arc (169:-169.5:5cm);
		\draw[thick] (-2,2) arc (171:-171:6cm);
		\draw[thick] (-3,2) arc (172:29:7cm);
		\draw[thick] (-3,0) arc (-172:-29:7cm);
		\draw[thick, red] (10,4.5) to [out=-50, in=90] (9.3,1);
		\draw[thick, red] (10,-2.5) to [out=50, in=-90] (9.3,1);
		\end{tikzpicture}
		+
		\begin{tikzpicture}[baseline=(current bounding box.center), scale=.25]
		\draw[thick] (2,0) .. controls (2,1.25) and (-2,.25) .. (-2,2);
		\draw[thick] (1,0) .. controls (1,1) and (1.8,.8) .. (2,2);
		\draw[thick] (0,0) .. controls (0,1) and (.8,.8) .. (1,2);
		\draw[thick] (-1,0) .. controls (-1,1) and (-.2,.8) .. (0,2);
		\draw[thick] (-2,0) .. controls (-2,1) and (-1.2,.8) .. (-1,2);
		\draw[thick] (-3,0)--(-3,2);
		\node at (2,1.9){\arrowlines};
		\node at (1,1.9){\arrowlines};
		\node at (0,1.9){\arrowlines};
		\node at (-1,1.9){\arrowlines};
		\node at (-2,1.9){\arrowlines};
		\node[rotate=-180] at (-3,1.9){\arrowlines};
		\draw[thick] (2,2) arc (150:-150:2cm);
		\draw[thick] (1,2) arc (160.5:-160.5:3cm);
		\draw[thick] (0,2) arc (165.5:-165.5:4cm);
		\draw[thick] (-1,2) arc (169:-169.5:5cm);
		\draw[thick] (-2,2) arc (171:30:6cm);
		\draw[thick] (-2,0) arc (-171:-30:6cm);
		\draw[thick] (-3,2) arc (172:29:7cm);
		\draw[thick] (-3,0) arc (-172:-29:7cm);
		\draw[thick,red] (10,4.5) .. controls (10.3,3.7) and (9.2,3.6) .. (9.1,4.1);
		\draw[thick,red] (9.1,-2.1) .. controls (9.1,-1.7) and (10.3,-1.6) .. (10,-2.5);
		\end{tikzpicture}\\
		&+
		\begin{tikzpicture}[baseline=(current bounding box.center), scale=.25]
		\draw[thick] (2,0) .. controls (2,1.25) and (-2,.25) .. (-2,2);
		\draw[thick] (1,0) .. controls (1,1) and (1.8,.8) .. (2,2);
		\draw[thick] (0,0) .. controls (0,1) and (.8,.8) .. (1,2);
		\draw[thick] (-1,0) .. controls (-1,1) and (-.2,.8) .. (0,2);
		\draw[thick] (-2,0) .. controls (-2,1) and (-1.2,.8) .. (-1,2);
		\draw[thick] (-3,0)--(-3,2);
		\node at (2,1.9){\arrowlines};
		\node at (1,1.9){\arrowlines};
		\node at (0,1.9){\arrowlines};
		\node at (-1,1.9){\arrowlines};
		\node at (-2,1.9){\arrowlines};
		\node[rotate=-180] at (-3,1.9){\arrowlines};
		\draw[thick] (2,2) arc (150:-150:2cm);
		\draw[thick] (1,2) arc (160.5:-160.5:3cm);
		\draw[thick] (0,2) arc (165.5:-165.5:4cm);
		\draw[thick] (-1,2) arc (169:-169.5:5cm);
		\draw[thick] (-2,2) arc (171:30:6cm);
		\draw[thick] (-2,0) arc (-171:-30:6cm);
		\draw[thick] (-3,2) arc (172:29:7cm);
		\draw[thick] (-3,0) arc (-172:-29:7cm);
		\draw[thick,red] (10,4.5) .. controls (10.3,3.7) and (9.2,3.6) .. (9.1,4.1);
		\draw[thick,red] (9.1,-2.1) .. controls (9.1,-1.7) and (10.3,-1.6) .. (10,-2.5);
		\draw[thick] (9.4,3.8) circle (.3cm);
		\draw[thick] (10,-2) circle (.3cm);
		\end{tikzpicture}
		\end{align*}
		and the two hollow dots appearing in the last term cancel with each other if we slide them along the outermost strand. This observation will hold for the rest of the computation, so we will omit drawing the second resolution term and instead write the first resolution term with coefficient 2. We will show that all resolution terms coming from crossings on the: outermost strand, innermost strand, and intermediate strands are zero. 
		
		For the resolution term coming from the crossing of outermost strands, we have 
		\begin{align*}
		\begin{tikzpicture}[baseline=(current bounding box.center), scale=.25]
		\draw[thick] (2,0) .. controls (2,1.25) and (-2,.25) .. (-2,2);
		\draw[thick] (1,0) .. controls (1,1) and (1.8,.8) .. (2,2);
		\draw[thick] (0,0) .. controls (0,1) and (.8,.8) .. (1,2);
		\draw[thick] (-1,0) .. controls (-1,1) and (-.2,.8) .. (0,2);
		\draw[thick] (-2,0) .. controls (-2,1) and (-1.2,.8) .. (-1,2);
		\draw[thick] (-3,0)--(-3,2);
		\node at (2,1.9){\arrowlines};
		\node at (1,1.9){\arrowlines};
		\node at (0,1.9){\arrowlines};
		\node at (-1,1.9){\arrowlines};
		\node at (-2,1.9){\arrowlines};
		\node[rotate=-180] at (-3,1.9){\arrowlines};
		\draw[thick] (2,2) arc (150:-150:2cm);
		\draw[thick] (1,2) arc (160.5:-160.5:3cm);
		\draw[thick] (0,2) arc (165.5:-165.5:4cm);
		\draw[thick] (-1,2) arc (169:-169.5:5cm);
		\draw[thick] (-2,2) arc (171:30:6cm);
		\draw[thick] (-2,0) arc (-171:-30:6cm);
		\draw[thick] (-3,2) arc (172:29:7cm);
		\draw[thick] (-3,0) arc (-172:-29:7cm);
		\draw[thick,red] (10,4.5) .. controls (10.3,3.7) and (9.2,3.6) .. (9.1,4.1);
		\draw[thick,red] (9.1,-2.1) .. controls (9.1,-1.7) and (10.3,-1.6) .. (10,-2.5);
		\end{tikzpicture}
		=
		\begin{tikzpicture}[baseline=(current bounding box.center), scale=.3]
		\draw[thick] (2,0) .. controls (2,1.25) and (-2,.25) .. (-2,2);
		\draw[thick] (1,0) .. controls (1,1) and (1.8,.8) .. (2,2);
		\draw[thick] (0,0) .. controls (0,1) and (.8,.8) .. (1,2);
		\draw[thick] (-1,0) .. controls (-1,1) and (-.2,.8) .. (0,2);
		\draw[thick] (-2,0) .. controls (-2,1) and (-1.2,.8) .. (-1,2);
		\draw[thick] (-3,0)--(-3,2);
		\node at (2,1.9){\arrowlines};
		\node at (1,1.9){\arrowlines};
		\node at (0,1.9){\arrowlines};
		\node at (-1,1.9){\arrowlines};
		\node at (-2,1.9){\arrowlines};
		\node[rotate=-180] at (-3,1.9){\arrowlines};
		\draw[thick] (2,2) arc (150:-150:2cm);
		\draw[thick] (1,2) arc (160.5:-160.5:3cm);
		\draw[thick] (0,2) arc (165.5:-165.5:4cm);
		\draw[thick] (-1,2) arc (169:-169.5:5cm);
		\draw[thick,red] (-3,2) arc (180:3:.5cm);
		\draw[thick,red] (-3,0) arc (-180:0:.5cm);
		\end{tikzpicture}
		=0
		\end{align*}  
		where the last equality follows from \eqref{eqn-symmetric-group-relations}.
		
		For the resolution term coming from the crossing of intermediate strands, consider a generic intermediate strand. We have

		\begin{align*}
		\begin{tikzpicture}[baseline=(current bounding box.center), scale=.25]
		\draw[thick] (-2,0) .. controls (-2,1) and (-1.2,.8) .. (-1,2);
		\draw[thick] (-1,0) .. controls (-1,1) and (-.2,.8) .. (0,2);
		\draw[thick] (0,0) .. controls (0,1) and (.8,.8) .. (1,2);
		\draw[thick] (1,0) .. controls (1,1) and (1.8,.8) .. (2,2);
		\draw[thick] (2,0) .. controls (2,1.25) and (-2,.25) .. (-2,2);
		\draw[thick] (-3,0)--(-3,2);
		\node at (2,1.9){\arrowlines};
		\node at (1,1.9){\arrowlines};
		\node at (0,1.9){\arrowlines};
		\node at (-1,1.9){\arrowlines};
		\node at (-2,1.9){\arrowlines};
		\node[rotate=-180] at (-3,1.9){\arrowlines};
		\node[rotate=-180] at (9.9,1){\arrowlines};
		\node[rotate=-180] at (8.9,1){\arrowlines};
		\draw[thick] (2,2) arc (150:-150:2cm);
		\draw[thick] (1,2) arc (160.5:-160.5:3cm);
		\draw[thick] (0,2) arc (165.5:50.5:4cm);
		\draw[thick] (0,0) arc (-165.5:-50:4cm);
		\draw[thick] (-1,2) arc (169:-169.5:5cm);
		\draw[thick] (-2,2) arc (171:-171:6cm);
		\draw[thick] (-3,2) arc (172:35:7cm);
		\draw[thick] (-3,0) arc (-172:-35:7cm);
		\draw[thick, red] (9.66,5.05) to [out=-70, in=-50] (6.4,4.1);
		\draw[thick, red] (9.66,-3.05) to [out=70, in=50] (6.4,-2.1);
		\end{tikzpicture}
		&=
		\begin{tikzpicture}[baseline=(current bounding box.center), scale=.25]
		\draw[thick] (-2,0) .. controls (-2,1) and (-1.2,.8) .. (-1,2);
		\draw[thick] (-1,0) .. controls (-1,1) and (-.2,.8) .. (0,2);
		\draw[thick] (0,0) .. controls (0,1) and (.8,.8) .. (1,2);
		\draw[thick] (1,0) .. controls (1,1) and (1.8,.8) .. (2,2);
		\draw[thick] (2,0) .. controls (2,1.25) and (-2,.25) .. (-2,2);
		\draw[thick] (-3,0)--(-3,2);
		\node at (2,1.9){\arrowlines};
		\node at (1,1.9){\arrowlines};
		\node at (0,1.9){\arrowlines};
		\node at (-1,1.9){\arrowlines};
		\node at (-2,1.9){\arrowlines};
		\node[rotate=-180] at (-3,1.9){\arrowlines};
		\node[rotate=-180] at (9.9,1){\arrowlines};
		\node[rotate=-180] at (8.9,1){\arrowlines};
		\draw[thick] (2,2) arc (150:-150:2cm);
		\draw[thick] (1,2) arc (160.5:-160.5:3cm);
		\draw[thick] (-1,2) arc (169:-169.5:5cm);
		\draw[thick] (-2,2) arc (171:-171:6cm);
		\draw[thick,red] (-3,2) arc (180:2:1.5cm);
		\draw[thick,red] (-3,0) arc (-180:0:1.5cm);
		\end{tikzpicture}
		=
		\begin{tikzpicture}[baseline=(current bounding box.center), scale=.25]
		\draw[thick] (-2,0) .. controls (-2,1) and (-1.2,.8) .. (-1,2);
		\draw[thick] (-1,0) .. controls (-1,1) and (-.2,.8) .. (0,2);
		\draw[thick] (1,0) .. controls (1,1) and (1.8,.8) .. (2,2);
		\draw[thick] (2,0) .. controls (2,1.25) and (-2,.25) .. (-2,2);
		\draw[thick] (-3,0)--(-3,2);
		\node at (2,1.9){\arrowlines};
		\node[rotate=-20] at (1,1.9){\arrowlines};
		\node at (0,1.9){\arrowlines};
		\node at (-1,1.9){\arrowlines};
		\node at (-2,1.9){\arrowlines};
		\node[rotate=-180] at (-3,1.9){\arrowlines};
		\node[rotate=-180] at (9.9,1){\arrowlines};
		\node[rotate=-180] at (8.9,1){\arrowlines};
		\draw[thick] (2,2) arc (150:-150:2cm);
		\draw[thick] (1,2) arc (160.5:-160.5:3cm);
		\draw[thick] (-1,2) arc (169:-169.5:5cm);
		\draw[thick] (-2,2) arc (171:-171:6cm);
		\draw[thick,red] (-3,2) arc (180:2:1.5cm);
		\draw[thick,red] (-3,0) to[out=-70,in=-130] (-.7,1.3);
		\draw[thick,red] (-.7,1.3) to[out=50, in=-100] (1,2);
		\end{tikzpicture}\\
		&=\begin{tikzpicture}[baseline=(current bounding box.center), scale=.25]
		\draw[thick] (-2,0) .. controls (-2,1) and (-1.2,.8) .. (-1,2);
		\draw[thick] (-1,0) .. controls (-1,1) and (-.2,.8) .. (0,2);
		\draw[thick] (1,0) .. controls (1,1) and (1.8,.8) .. (2,2);
		\draw[thick] (2,0) .. controls (2,1.25) and (-2,.25) .. (-2,2);
		\draw[thick] (-3,0)--(-3,2);
		\node at (2,1.9){\arrowlines};
		\node[rotate=-10] at (1,1.9){\arrowlines};
		\node at (0,1.9){\arrowlines};
		\node at (-1,1.9){\arrowlines};
		\node at (-2,1.9){\arrowlines};
		\node[rotate=-180] at (-3,1.9){\arrowlines};
		\node[rotate=-180] at (9.9,1){\arrowlines};
		\node[rotate=-180] at (8.9,1){\arrowlines};
		\draw[thick] (2,2) arc (150:-150:2cm);
		\draw[thick] (1,2) arc (160.5:-160.5:3cm);
		\draw[thick] (-1,2) arc (169:-169.5:5cm);
		\draw[thick] (-2,2) arc (171:-171:6cm);
		\draw[thick,red] (-3,2) arc (180:2:1.5cm);
		\draw[thick,red] (-3,0) to[out=-70,in=-130] (-1.7,1.3);
		\draw[thick,red] (-1.7,1.3) to[out=50, in=-100] (1,2);
		\end{tikzpicture}
		=
		\begin{tikzpicture}[baseline=(current bounding box.center), scale=.25]
		\draw[thick] (-2,0) .. controls (-2,1) and (-1.2,.8) .. (-1,2);
		\draw[thick] (-1,0) .. controls (-1,1) and (-.2,.8) .. (0,2);
		\draw[thick] (1,0) .. controls (1,1) and (1.8,.8) .. (2,2);
		\draw[thick] (2,0) .. controls (2,1.25) and (-2,.25) .. (-2,2);
		\node at (2,1.9){\arrowlines};
		\node at (1,1.9){\arrowlines};
		\node at (0,1.9){\arrowlines};
		\node at (-1,1.9){\arrowlines};
		\node at (-2,1.9){\arrowlines};
		\node[rotate=-180] at (9.9,1){\arrowlines};
		\node[rotate=-180] at (8.9,1){\arrowlines};
		\draw[thick] (2,2) arc (150:-150:2cm);
		\draw[thick] (1,2) arc (160.5:-160.5:3cm);
		\draw[thick] (-1,2) arc (169:-169.5:5cm);
		\draw[thick] (-2,2) arc (171:-171:6cm);
		\draw[thick,red] (0,2) to[out=90, in=0] (-.35,2.7);
		\draw[thick,red] (-.35,2.7) to[out=180, in=130] (-.35,1.3);
		\draw[thick, red] (-.35,1.3) to[out=-50, in=-100] (1,2);
		\end{tikzpicture}	
		= 0
		\end{align*}
where the second and third equalities follow from a Reidemeister III move, and the fourth is a result of relation \eqref{up down double crossings}. Hence these resolution terms are zero as well. In general, for a resolution term coming from a crossing of intermediate strands, we can first pull the red string above the permutation using Reidemeister III moves, and then pull the red string into the permutation using relation (\ref{up down double crossings}) to get a left twist curl.
		
		Finally, for the resolution term coming from the crossing of intermediate strands the situation is simpler:

		\begin{align*}
		\begin{tikzpicture}[baseline=(current bounding box.center), scale=.25]
		\draw[thick] (-2,0) .. controls (-2,1) and (-1.2,.8) .. (-1,2);
		\draw[thick] (-1,0) .. controls (-1,1) and (-.2,.8) .. (0,2);
		\draw[thick] (0,0) .. controls (0,1) and (.8,.8) .. (1,2);
		\draw[thick] (1,0) .. controls (1,1) and (1.8,.8) .. (2,2);
		\draw[thick] (2,0) .. controls (2,1.25) and (-2,.25) .. (-2,2);
		\draw[thick] (-3,0)--(-3,2);
		\node at (2,1.9){\arrowlines};
		\node at (1,1.9){\arrowlines};
		\node at (0,1.9){\arrowlines};
		\node at (-1,1.9){\arrowlines};
		\node at (-2,1.9){\arrowlines};
		\node[rotate=-180] at (-3,1.9){\arrowlines};
		\node[rotate=-180] at (9.9,1){\arrowlines};
		\node[rotate=-180] at (8.9,1){\arrowlines};
		\node[rotate=-180] at (7.9,1){\arrowlines};
		\node[rotate=-180] at (6.8,1){\arrowlines};
		\draw[thick] (2,2) arc (150:30:2cm);
		\draw[thick] (2,0) arc (-150:-30:2cm);
		\draw[thick] (1,2) arc (160.5:-160.5:3cm);
		\draw[thick] (0,2) arc (165.5:-165.5:4cm);
		\draw[thick] (-1,2) arc (169:-169.5:5cm);
		\draw[thick] (-2,2) arc (171:-171:6cm);
		\draw[thick] (-3,2) arc (172:35:7cm);
		\draw[thick] (-3,0) arc (-172:-35:7cm);
		\draw[thick, red] (9.66,5.05) to [out=-50, in=-50] (5.45,2);
		\draw[thick, red] (9.66,-3.05) to [out=50, in=50] (5.45,0);
		\end{tikzpicture}
		=
		\begin{tikzpicture}[baseline=(current bounding box.center), scale=.25]
		\draw[thick] (-2,0) .. controls (-2,1) and (-1.2,.8) .. (-1,2);
		\draw[thick] (-1,0) .. controls (-1,1) and (-.2,.8) .. (0,2);
		\draw[thick] (0,0) .. controls (0,1) and (.8,.8) .. (1,2);
		\draw[thick] (1,0) .. controls (1,1) and (1.8,.8) .. (2,2);
		\draw[thick] (2,0) .. controls (2,1.25) and (-2,.25) .. (-2,2);
		\draw[thick] (-3,0)--(-3,2);
		\node at (2,1.9){\arrowlines};
		\node at (1,1.9){\arrowlines};
		\node at (0,1.9){\arrowlines};
		\node at (-1,1.9){\arrowlines};
		\node at (-2,1.9){\arrowlines};
		\node[rotate=-180] at (-3,1.9){\arrowlines};
		\node[rotate=-180] at (9.9,1){\arrowlines};
		\node[rotate=-180] at (8.9,1){\arrowlines};
		\node[rotate=-180] at (7.9,1){\arrowlines};
		\node[rotate=-180] at (6.8,1){\arrowlines};
		\draw[thick] (1,2) arc (160.5:-160.5:3cm);
		\draw[thick] (0,2) arc (165.5:-165.5:4cm);
		\draw[thick] (-1,2) arc (169:-169.5:5cm);
		\draw[thick] (-2,2) arc (171:-171:6cm);
		\draw[thick,red] (-3,2) arc (180:0:2.5cm);
		\draw[thick,red] (-3,0) arc (-180:0:2.5cm);
		\end{tikzpicture}
		=
		\begin{tikzpicture}[baseline=(current bounding box.center), scale=.25]
		\draw[thick] (-2,0) .. controls (-2,1) and (-1.2,.8) .. (-1,2);
		\draw[thick] (-1,0) .. controls (-1,1) and (-.2,.8) .. (0,2);
		\draw[thick] (0,0) .. controls (0,1) and (.8,.8) .. (1,2);
		\draw[thick] (1,0) .. controls (1,1) and (1.8,.8) .. (2,2);
		\draw[thick] (-2.5,0) to[out=70, in=-90] (-2,2);
		\draw[thick] (-3,0)--(-3,2);
		\node at (2,1.9){\arrowlines};
		\node at (1,1.9){\arrowlines};
		\node at (0,1.9){\arrowlines};
		\node at (-1,1.9){\arrowlines};
		\node at (-2,1.9){\arrowlines};
		\node[rotate=-180] at (-3,1.9){\arrowlines};
		\node[rotate=-180] at (9.9,1){\arrowlines};
		\node[rotate=-180] at (8.9,1){\arrowlines};
		\node[rotate=-180] at (7.9,1){\arrowlines};
		\node[rotate=-180] at (6.8,1){\arrowlines};
		\draw[thick] (1,2) arc (160.5:-160.5:3cm);
		\draw[thick] (0,2) arc (165.5:-165.5:4cm);
		\draw[thick] (-1,2) arc (169:-169.5:5cm);
		\draw[thick] (-2,2) arc (171:-171:6cm);
		\draw[thick,red] (-3,2) arc (180:0:2.5cm);
		\draw[thick,red] (-3,0) arc (-180:0:0.25cm);
		\end{tikzpicture}
		=0.
		\end{align*}	

		Hence all the resolution terms are zero. This leaves us with

		\begin{align*}
		\begin{tikzpicture}[baseline=(current bounding box.center), scale=.25]
		\draw[thick] (-2,0) .. controls (-2,1) and (-1.2,.8) .. (-1,2);
		\draw[thick] (-1,0) .. controls (-1,1) and (-.2,.8) .. (0,2);
		\draw[thick] (0,0) .. controls (0,1) and (.8,.8) .. (1,2);
		\draw[thick] (1,0) .. controls (1,1) and (1.8,.8) .. (2,2);
		\draw[thick] (2,0) .. controls (2,1.25) and (-2,.25) .. (-2,2);
		\draw[thick] (-3,0)--(-3,2);
		\node at (2,1.9){\arrowlines};
		\node at (1,1.9){\arrowlines};
		\node at (0,1.9){\arrowlines};
		\node at (-1,1.9){\arrowlines};
		\node at (-2,1.9){\arrowlines};
		\node[rotate=-180] at (-3,1.9){\arrowlines};
		\draw[thick] (2,2) arc (150:-150:2cm);
		\draw[thick] (1,2) arc (160.5:-160.5:3cm);
		\draw[thick] (0,2) arc (165.5:-165.5:4cm);
		\draw[thick] (-1,2) arc (169:-169.5:5cm);
		\draw[thick] (-2,2) arc (171:-171:6cm);
		\draw[thick] (-3,2) arc (172:-172:7cm);
		\end{tikzpicture}
		&=
		\begin{tikzpicture}[baseline=(current bounding box.center), scale=.25]
		\draw[thick] (-2,0) .. controls (-2,1) and (-1.2,.8) .. (-1,2);
		\draw[thick] (-1,0) .. controls (-1,1) and (-.2,.8) .. (0,2);
		\draw[thick] (0,0) .. controls (0,1) and (.8,.8) .. (1,2);
		\draw[thick] (1,0) .. controls (1,1) and (1.8,.8) .. (2,2);
		\draw[thick] (2,0) .. controls (2,1.25) and (-2,.25) .. (-2,2);
		\draw[thick] (-3,0)--(-3,2);
		\node at (2,1.9){\arrowlines};
		\node at (1,1.9){\arrowlines};
		\node at (0,1.9){\arrowlines};
		\node at (-1,1.9){\arrowlines};
		\node at (-2,1.9){\arrowlines};
		\node[rotate=-180] at (-3,1.9){\arrowlines};
		\node[rotate=-180] at (9.9,1){\arrowlines};
		\node[rotate=-180] at (8.9,1){\arrowlines};
		\node[rotate=-180] at (7.9,1){\arrowlines};
		\node[rotate=-180] at (6.8,1){\arrowlines};
		\node[rotate=-180] at (5.7,1){\arrowlines};
		\node at (5,1){\redarrowlines};
		\draw[thick] (2,2) arc (150:-150:2cm);
		\draw[thick] (1,2) arc (160.5:-160.5:3cm);
		\draw[thick] (0,2) arc (165.5:-165.5:4cm);
		\draw[thick] (-1,2) arc (169:-169.5:5cm);
		\draw[thick] (-2,2) arc (171:-171:6cm);
		\draw[thick] (-3,2) arc (172:35:7cm);
		\draw[thick] (-3,0) arc (-172:-35:7cm);
		\draw[thick, red] (9.66,5.05) to [out=-50, in=90] (5,1);
		\draw[thick, red] (9.66,-3.05) to [out=50, in=-90] (5,1);
		\end{tikzpicture}
		=
		\begin{tikzpicture}[baseline=(current bounding box.center), scale=.25]
		\draw[thick] (-2,0) .. controls (-2,1) and (-1.2,.8) .. (-1,2);
		\draw[thick] (-1,0) .. controls (-1,1) and (-.2,.8) .. (0,2);
		\draw[thick] (0,0) .. controls (0,1) and (.8,.8) .. (1,2);
		\draw[thick] (1,0) .. controls (1,1) and (1.8,.8) .. (2,2);
		\draw[thick] (2,0) .. controls (2,1.25) and (-2,.25) .. (-2,2);
		\draw[thick] (-3,0)--(-3,2);
		\node at (2,1.9){\arrowlines};
		\node at (1,1.9){\arrowlines};
		\node at (0,1.9){\arrowlines};
		\node at (-1,1.9){\arrowlines};
		\node at (-2,1.9){\arrowlines};
		\node[rotate=-180] at (-3,1.9){\arrowlines};
		\node at (3,1){\redarrowlines};
		\draw[thick] (2,2) arc (150:-150:2cm);
		\draw[thick] (1,2) arc (160.5:-160.5:3cm);
		\draw[thick] (0,2) arc (165.5:-165.5:4cm);
		\draw[thick] (-1,2) arc (169:-169.5:5cm);
		\draw[thick] (-2,2) arc (171:-171:6cm);
		\draw[thick, red] (-3,2) to [out=90, in=90] (3,1);
		\draw[thick, red] (-3,0) to [out=-90, in=-90] (3,1);
		\end{tikzpicture}\\
		&=
		\begin{tikzpicture}[baseline=(current bounding box.center), scale=.25]
		\draw[thick] (-2,0) .. controls (-2,1) and (-1.2,.8) .. (-1,2);
		\draw[thick] (-1,0) .. controls (-1,1) and (-.2,.8) .. (0,2);
		\draw[thick] (0,0) .. controls (0,1) and (.8,.8) .. (1,2);
		\draw[thick] (1,0) .. controls (1,1) and (1.8,.8) .. (2,2);
		\draw[thick] (2,0) .. controls (2,1.25) and (-2,.25) .. (-2,2);
		\draw[thick] (-3,0)--(-3,2);
		\node at (2,1.9){\arrowlines};
		\node at (1,1.9){\arrowlines};
		\node at (0,1.9){\arrowlines};
		\node at (-1,1.9){\arrowlines};
		\node at (-2,1.9){\arrowlines};
		\node[rotate=-180] at (-3,1.9){\arrowlines};
		\node at (-2.5,.7){\redarrowlines};
		\draw[thick] (2,2) arc (150:-150:2cm);
		\draw[thick] (1,2) arc (160.5:-160.5:3cm);
		\draw[thick] (0,2) arc (165.5:-165.5:4cm);
		\draw[thick] (-1,2) arc (169:-169.5:5cm);
		\draw[thick] (-2,2) arc (171:-171:6cm);
		\draw[thick, red] (-3,2) to [out=90, in=90] (3,2);
		\draw[thick, red] (-3,0) to [out=-90, in=-90] (3,-.7);
		\draw[thick,red] (3,2) to[out=-90, in=90] (-2.5,.7);
		\draw[thick,red] (-2.5,.7) to[out=-90, in=90] (3,-.7);
		\end{tikzpicture}
		=
		\begin{tikzpicture}[baseline=(current bounding box.center), scale=.25]
		\draw[thick] (-2,0) .. controls (-2,1) and (-1.2,.8) .. (-1,2);
		\draw[thick] (-1,0) .. controls (-1,1) and (-.2,.8) .. (0,2);
		\draw[thick] (0,0) .. controls (0,1) and (.8,.8) .. (1,2);
		\draw[thick] (1,0) .. controls (1,1) and (1.8,.8) .. (2,2);
		\draw[thick] (2,0) .. controls (2,1.25) and (-2,.25) .. (-2,2);
		\node at (2,1.9){\arrowlines};
		\node at (1,1.9){\arrowlines};
		\node at (0,1.9){\arrowlines};
		\node at (-1,1.9){\arrowlines};
		\node at (-2,1.9){\arrowlines};
		\node[rotate=180] at (-5,1){\redarrowlines};
		\draw[thick] (2,2) arc (150:-150:2cm);
		\draw[thick] (1,2) arc (160.5:-160.5:3cm);
		\draw[thick] (0,2) arc (165.5:-165.5:4cm);
		\draw[thick] (-1,2) arc (169:-169.5:5cm);
		\draw[thick] (-2,2) arc (171:-171:6cm);
		\draw[thick,red] (-4,1) circle (1cm);
		\end{tikzpicture}
		\end{align*}
		and a counter-clockwise oriented bubble is equal to $1$ by the defining relation (\ref{eqn-symmetric-group-relations}). These diagrammatic arguments clearly hold for arbitrary $k > 1$. Hence the action of $\omega_{(1,0)}$ on $\alpha_k$ for $k\neq1$ is trivial.

		This concludes the proof of the base case $\length{\mu}=1$. Now suppose $\op \cdot \alpha_\mu=\alpha_\mu$ for some $\mu\in\oddpartitions$ such that $\length{\mu}=m-1$, and let $n$ be a positive integer. Then
		\begin{align*}
		0&=[\sqrt{2}\omega_{-(2n+1),0}, \op]\cdot \alpha_\mu = \sqrt{2}\omega_{-(2n+1),0} \cdot(\op \cdot \alpha_\mu)- \op \cdot (\sqrt{2}\omega_{-(2n+1),0}\cdot \alpha_\mu)\\
		& = \sqrt{2}\omega_{-(2n+1),0} \cdot \alpha_\mu - \op \cdot (\sqrt{2}\omega_{-(2n+1),0}\cdot \alpha_\mu) \\
		&= \alpha_{(\mu,2n+1)} - \op \cdot \alpha_{(\mu,2n+1)},	
		\end{align*}
and the result follows by induction.
				
Hence if $\mu$ doesn't contain any parts of size $1$, then 
$\op \cdot \alpha_\mu=\alpha_{\mu}.$

Now suppose $\gamma$ is an odd partition without parts of size $1$. We will prove that  
\begin{equation*}
\op\cdot \alpha_{(\gamma,1^k)} = \alpha_{(\gamma,1^k)} + 2k\alpha_{(\gamma,1^{k-1})}
\end{equation*}
by induction on $k$. The base case $k=0$ was proved above. Suppose the formula holds for $\alpha_{(\gamma,1^k)}$. 
	\begin{align*}
	\op\cdot \alpha_{(\gamma,1^{k+1})}&=\op\cdot \big(\alpha_{(\gamma,1^k)}\alpha_1-2|(\gamma,1^k)|\alpha_{(\gamma,1^k)}\big)\hspace{3mm} \text{  by Lemma \ref{alphamu1}} \\
	&= \alpha_1\op \cdot\alpha_{(\gamma,1^k)} +2\op \cdot \alpha_{(\gamma,1^k)} -2|(\gamma,1^k)|\op \cdot \alpha_{(\gamma,1^k)}\\
	&= (\alpha_1+2-2|(\gamma,1^k)|) \op \cdot \alpha_{(\gamma,1^k)}\\
	&= (\alpha_1+2-2|(\gamma,1^k)|) (\alpha_{(\gamma,1^k)}+2k \alpha_{(\gamma,1^{k-1})} ) \hspace{3mm}\text{   by the inductive hypothesis}\\
	&= (\alpha_1+2-2|(\gamma,1^k)|)\alpha_{(\gamma,1^k)} +2k(\alpha_1+2-2|(\gamma,1^k)|)\alpha_{(\gamma,1^{k-1})}\\
	&= \alpha_{(\gamma,1^{k+1})} +4 \alpha_{(\gamma,1^k)} + 2k \alpha_{(\gamma,1^k)} \hspace{3mm} \text{ by Lemma \ref{alphamu1}}\\
	&= \alpha_{(\gamma,1^{k+1})}+ 2(k+1) \alpha_{(\gamma,1^k)},
	\end{align*}
and the result follows after the identification $\alpha_\mu \rightarrow 2^{\length{\mu}}\shiftedpowersum_{\mu}$.

For the action of $\omega_{0,3}$, note that this element acts on the center as multiplication by itself. Therefore $$\omega_{0,3}\cdot \alpha_{\mu}=\alpha_{3}\alpha_{\mu}+\alpha_{(1,1)}\alpha_{\mu},$$
		$$-2\omega_{0,3}\cdot 2^{\length{\mu}}\shiftedpowersum_\mu= 2^{\length{\mu}+1}\shiftedpowersum_3\shiftedpowersum_{\mu}+2^{\length{\mu}+2}\shiftedpowersum_{(1,1)}\shiftedpowersum_\mu \qedhere.$$
\end{proof}

\bibliographystyle{amsalpha}
\bibliography{spinHeisenCenter}

\end{document}